\documentclass[journal,twoside,web]{ieeecolor}
\usepackage{generic}
\usepackage{generic,enumerate}
\usepackage{cite}
\usepackage{amsmath,amssymb,amsfonts}
\usepackage{multirow}
\usepackage{array}
\usepackage{dsfont}
\allowdisplaybreaks
\usepackage{mathtools,comment}
\usepackage{algorithm}
\usepackage{algpseudocode} 
\usepackage{graphicx}
\usepackage{tikz}
\usetikzlibrary{%
    decorations.pathreplacing,%
    decorations.pathmorphing%
}
\usetikzlibrary{arrows.meta}
\usetikzlibrary{automata,positioning}
\usepackage{textcomp}
\usepackage{xcolor}
\let\originalleft\left
\let\originalright\right
\renewcommand{\left}{\mathopen{}\mathclose\bgroup\originalleft}
\renewcommand{\right}{\aftergroup\egroup\originalright}
\newcommand{\bE}{\mathbb{E}}

\newcommand{\cS}{\mathcal{S}}
\newcommand{\cD}{\mathcal{D}}
\newcommand{\bN}{\mathbb{N}}
\newcommand{\cN}{\mathcal{N}}

\newcommand{\cU}{\mathcal{U}}
\newcommand{\cX}{\mathcal{X}}
\newcommand{\ust}{^{\star}}
\newcommand{\ub}{^{(\beta)}}
\newcommand{\ubn}{^{(\beta_n)}}
\newcommand{\bB}{\mathbb{B}}
\newcommand{\bR}{\mathbb{R}}
\newcommand{\bZ}{\mathbb{Z}}
\newcommand{\te}{\theta}

\newcommand{\cI}{\mathcal{I}}
\newcommand{\cP}{\mathcal{P}}
\newcommand{\bP}{\mathbb{P}}

\newcommand{\cB}{\mathcal{B}}
\newcommand{\cA}{\mathcal{A}}

\newcommand{\cF}{\mathcal{F}}
\newcommand{\cH}{\mathcal{H}}

\newcommand{\D}{\Delta}

\newcommand{\vp}{\varphi}

\newcommand{\lf}{\left}
\newcommand{\rt}{\right}

\newcommand{\g}{\gamma}
\newcommand{\ve}{\varepsilon}
\newcommand{\pih}{\pi^{(hyb.)}}
\newcommand{\ta}{\tau_{\cA(\ve)}}

\newcommand{\nal}[1]{\begin{align*}#1\end{align*}}
\newcommand{\al}[1]{\begin{align}#1\end{align}}
\newtheorem{assumption}{\textbf{Assumption}}

\newtheorem{definition}{Definition}
\newtheorem{theorem}{Theorem}
\newtheorem{proposition}{Proposition}
\newtheorem{lemma}{Lemma}
\newtheorem{remark}{Remark}
\newif\ifuseRomanappendices
\useRomanappendicesfalse

\def\BibTeX{{\rm B\kern-.05em{\sc i\kern-.025em b}\kern-.08em
    T\kern-.1667em\lower.7ex\hbox{E}\kern-.125emX}}
\begin{document}

\title{Optimal Scheduling for Remote State Estimation over Hybrid Channels
}

\author{Manali Dutta, Rahul Singh, and Shalabh Bhatnagar
\thanks{Manali Dutta and Shalabh Bhatnagar are with the Department of Computer Science
	and Automation, Indian Institute of Science, Bangalore 560012,
	Karnataka, India. E-mail: manalidutta@iisc.ac.in, shalabh@iisc.ac.in. }
\thanks{E-mail: rahulsingh0188@gmail.com.}
}

\maketitle
\thispagestyle{empty}

\begin{abstract}
	We study optimal scheduling for remote state estimation over a network with two heterogeneous communication channels: a fast but unreliable channel and a slow but reliable channel. To capture temporal correlations in packet losses, we model the unreliable channel as a Gilbert-Elliott (GE) channel. The remote estimation setup consists of a source, a sensor, and a remote estimator. The source evolves as a discrete-time autoregressive (AR) process, and the sensor decides at each time whether to use the fast unreliable channel or the slow reliable channel. We formulate the scheduling problem faced by the sensor as a Markov decision process (MDP) with a continuous state-space and consider minimizing the infinite horizon average cost criterion, where the cost consists of the squared estimation error and the transmission energy consumed. We establish the existence of an optimal stationary policy. We then characterize the structure of an optimal policy, and show that it has a threshold structure with respect to the estimation error. An optimal policy chooses from amongst the two channels based on whether the error exceeds certain thresholds, where the threshold value depends upon the GE channel state. When the system parameters are unknown, we propose an actor-critic (AC) learning algorithm that exploits the threshold structure of an optimal policy. Numerical results demonstrate that the proposed AC algorithm learns the policy structure effectively and achieves performance close to that of the optimal policy computed using the relative value iteration (RVI). 
\end{abstract}

\begin{keywords}
	Remote estimation, Gilbert-Elliott channel, hybrid channels, threshold policies, actor-critic algorithm.
\end{keywords}

\section{Introduction}

In a networked control system (NCS), the control loop is closed over a shared communication channel~\cite{zhang2019networked}. A sensor measures a physical process, encodes each measurement into a packet, and sends it to a remote estimator via a communication channel, whose output is used for monitoring or feedback control. Such setups arise in industrial automation, cyber-physical infrastructure, aircraft, and remote health monitoring~\cite{zhang2019networked}. In an NCS, the sensor faces a fundamental trade-off: transmitting a packet consumes energy and occupies the network, while not transmitting allows the estimation error to grow. The sensor must therefore decide \emph{when} to transmit. Moreover, when more than one channel is available, it must also decide \emph{which} channel to use. Since the estimator can act only on received information, the quality of the communication channel limits the achievable estimation and control performance~\cite{schenato2007foundations, wang2023review}.

The earliest studies modeled the channel in the simplest way where each packet is either delivered or dropped, with the drops occurring independently over time. This is the independent and identically distributed (i.i.d.) Bernoulli packet drop model. The seminal work~\cite{sinopoli2004kalman} shows that, under this model, there is a critical delivery probability below which the expected estimation error covariance diverges. Subsequent work extended this idea to control~\cite{imer2006optimal} and to communication channels with random delays~\cite{schenato2008optimal}. Once a cost is attached to each transmission, the problem becomes that of a scheduling problem. For a first-order source,~\cite{lipsa2011remote} proves that the optimal estimator is ``Kalman-like'' and the optimal transmission policy is of \emph{threshold} type, i.e., the sensor transmits only when the estimation error exceeds a certain threshold. Similar threshold rules are shown for an energy-harvesting sensor~\cite{nayyar2013optimal}, for autoregressive (AR) source~\cite{chakravorty2017fundamental, chakravorty2016remote}, and event-triggered estimation~\cite{wu2013event, molin2017event}.

However, the i.i.d. channel model has no memory, i.e., whether a packet is lost at one time provides no information about future losses. Practical communication channel often behave differently. Losses occur in \emph{bursts}, because the physical cause of a loss for example, congestion, a temporary obstruction, or deep fading, can persist over multiple time steps~\cite{yajnik1999measurement}. Measurement studies confirm that consecutive losses are strongly correlated in time~\cite{yajnik1999measurement, wang1995finite}. It is shown in~\cite{huang2007stability} that under correlated losses the arguments of the i.i.d.\ packet drop channel setting in~\cite{sinopoli2004kalman} no longer apply, and that stability depends on the correlation structure and not only on the mean loss rate. Thus, two links with the same average
loss rate can have very different effects on remote estimation performance. This observation motivates channel models with
memory.

A standard model for bursty packet losses is the Gilbert--Elliott (GE) channel, a two-state Markov chain with a \emph{good} state in
which packets are delivered and a \emph{bad} state in which packets are dropped. The transition probabilities determine the typical length of loss bursts. Originally introduced by Gilbert~\cite{gilbert1960capacity} and Elliott~\cite{elliott1963estimates} to describe burst errors on wired telephone circuits, the GE channel model has since been used far beyond that setting. In NCSs it is used to model an unreliable link with temporal correlation~\cite{qi2016optimal, chakravorty2017structure, farjam2023distributed, dutta2023optimal}. For remote estimation over a single GE channel, the works~\cite{qi2016optimal, chakravorty2019remote} establish threshold-type scheduling policies when the channel state is known to the sensor, and~\cite{dutta2023optimal, farjam2023distributed} do so when it is only partially observed.

The works described above consider a single transmission channel for the sensor. In many systems, however, the sensor can choose among several channels. In~\cite{gao2018remote}, the sensor chooses at each time step between a cheap noisy channel and a costly noiseless channel, both of which are memoryless. It is shown that the optimal policy is not, in general, a simple symmetric threshold in the source state, and restoring such a structure requires an auxiliary side channel that signals the sign of the state. The works~\cite{liu2022remote, liu2023stability} consider the scheduling of multiple sensors over multiple Markov fading channels, and derive necessary and sufficient conditions on the channel and process parameters for the remote estimation system to remain stable. The work~\cite{leong2023stability} designs a bandit algorithm that learns to select among several channels with unknown channel statistics while keeping the remote state estimation system stable. These works treat the available channels either as memoryless~\cite{gao2018remote} or as statistically similar in type~\cite{liu2022remote, liu2023stability,
leong2023stability}. To the best of our knowledge, none characterizes the structure of an optimal scheduling policy over two
channels that differ simultaneously in temporal correlation, delay, and reliability.

In practice, it is useful to combine channels of different kinds, because a fast channel is usually fragile, and a dependable channel is usually slow. Rather than relying on a single channel, practical systems combine multiple channels and moves traffic between them. We refer to this setting as a \emph{hybrid} channel model. In automotive electronics, for
example, FlexRay supports predictable safety-critical messages, Controller Area Network (CAN) supports general communication
despite nondeterministic delays, and automotive Ethernet carries high-bandwidth data~\cite{kim2015gateway}. Production vehicles often connect these
networks through a gateway~\cite{kim2015gateway}. In underwater systems, acoustic links provide long-range reliable communication at low data rates, while optical links provide much higher rates over shorter clear-water ranges. Therefore, so hybrid networks use optical links for bulk data and acoustic links for control~\cite{wang2017design}. Similar pairings appear in unmanned aerial vehicles~\cite{xiao2021survey}, smart-grid measurement systems that combine power-line and fiber links~\cite{galli2011grid} and, of course, in communication systems~\cite{semiari2019integrated, he2024efficient, fang2021embraces}. In all
these examples, links differ simultaneously in reliability, delay, and memory, which is precisely the combination that makes
optimal scheduling nontrivial. To the best of our knowledge, no prior work characterizes an optimal scheduling policy over hybrid channel model in a remote estimation setup.

In this paper, we study remote estimation of a discrete-time AR source over such a hybrid communication architecture. The sensor
observes the source, encodes observations into packets, and transmits them to a remote estimator over a communication channel. At each time, the sensor chooses between a fast but unreliable GE channel and a slow but reliable channel. The unreliable channel is modeled as a GE channel~\cite{gilbert1960capacity}. The reliable channel always delivers, but only after a fixed delay of $d$ time steps, during which it cannot accept a new packet. We minimize the infinite-horizon average cost~\cite{Hernandez2012discrete} consisting of squared estimation error plus transmission energy. A similar hybrid channel model is also studied in~\cite{pan2022age}, but for age of information (AoI), which measures information staleness but does not take into account the realized source value. Moreover, unlike AoI problems~\cite{kadota2018optimizing, lu2018age, bedewy2021low, yates2021age} that typically involve a countable state-space, our problem involves a continuous state-space arising from the AR source dynamics, our cost is unbounded, and our switching rule depends on the source dynamics. The main contributions are as follows.

\begin{enumerate}
	\item We formulate remote estimation of an AR source over a hybrid channel that pairs a fast, unreliable GE channel with a slow, reliable channel. Prior works have considered a single channel~\cite{lipsa2011remote, chakravorty2016remote, wu2019learning, ren2017infinite, qi2016optimal, chakravorty2019remote, dutta2023optimal}, two memoryless channels that differ in cost and reliability but not in delay or memory~\cite{gao2018remote}, or several channels of the same kind~\cite{farjam2023distributed, liu2022remote, liu2023stability, leong2023stability, wang2020whittle}. Our model differs from these settings because the two channels differ simultaneously in temporal correlation, delay, and reliability.

	\item We pose the sensor's problem as a Markov decision process (MDP) under the infinite horizon average cost criterion. It is a challenging problem since (i) the state-space is continuous, (ii) the instantaneous cost function is not bounded, (iii) the unreliable channel has a memory, and (iv) the two channels have different characteristics: deterministic delay of $d$ units and Markovian channel state. We nonetheless establish the existence of an optimal stationary policy.

	\item We characterize the following structure of an optimal policy. For each GE channel state, there is a threshold on the estimation error at which an optimal policy switches channels, and the direction of the switch depends on the channel and AR parameters. When the fast channel is sufficiently reliable, the policy uses it for large errors to correct them quickly. When the fast channel is highly unreliable, the policy switches to the slower reliable channel for large errors because a likely packet drop on the fast channel would be costly. The intuition associated with this structure is discussed in Section~\ref{subsec:intuition}. A switching structure is also reported in~\cite{pan2022age} in the AoI setting of. However, the problem in~\cite{pan2022age} has a countable state space and its threshold depends
    only on channel parameters. In our worl, the estimation error depends on the realized source trajectory, so the thresholds depend on
    both the AR source dynamics and the channel parameters.

	\item The switching rule depends on the AR and channel parameters, which are often unknown to the operator and may vary over time. We therefore design an actor-critic learning algorithm~\cite{konda2003onactor} that exploits the structural properties of an optimal policy. The structural result significantly reduces the complexity of searching for an optimal policy. This is because instead of searching over an infinite-dimensional space of admissible scheduling policies, we consider a parameterized class of policies with a finite number of parameters that captures the structural properties of an optimal policy. These parameters are then tuned using the AC algorithm.~Numerical results show that the proposed AC algorithm learns the predicted structure and adapt across different parameter regimes. Moreover, its performance is observed to be close to that of the optimal policy computed via relative value iteration (RVI)~\cite{Hernandez2012discrete}, which requires full knowledge of the system parameters.
\end{enumerate}

The rest of the paper is organized as follows.
Section~\ref{sec:prob_form} formalizes the system model and states the optimization problem faced by the sensor. Section~\ref{subsec:MDP formulation} casts this problem as an MDP, introduces the associated discounted problem, and describes the value iteration algorithm. Section~\ref{sec:struc_res} proves the existence of an optimal stationary policy and characterizes its threshold structure, first for the discounted problem and then for the average cost problem. Section~\ref{sec:num_res} presents numerical results where we first compute an optimal policy via RVI when the parameters are known and then develop an AC algorithm for the case when the parameters are unknown. Section~\ref{sec:conclusion} concludes the paper. Proofs of the few technical results are collected in the Appendix.

\emph{Notations:} We use $\bN$, $\bZ_{\geq 0}$, $\bR$, $\bR_+$ to denote the set of natural numbers, non-negative integers, real numbers, and non-negative real numbers, respectively. The probability of an event is denoted by $\bP(\cdot)$. For a $\sigma$-algebra $\cF$, $\bP[\cdot \mid \cF]$ and $\bE[\cdot \mid \cF]$ denote the conditional probability and conditional expectation given $\cF$, respectively.~For a topological space $\cX$, $\bB(\cX)$ denotes its Borel $\sigma$-algebra on $\cX$. We write $\eta(x;\mu,\sigma^2)$ for the probability density function (pdf) of a Gaussian random variable with mean $\mu$ and variance $\sigma^2$, and $\mathrm{Geom}(p)$ for the geometric distribution with parameter $p$. For an event $A$, $\mathds{1}(A)$ denotes its indicator random variable. The Dirac measure
with unit mass at $x$ is denoted by $\delta_x(\cdot)$. For a matrix $M$, $M^T$ denotes its transpose. For a set $\cX$, $2^{\cX}$ denotes its power set. The symbol $\otimes$ denotes the product $\sigma$-algebra. In particular, if $(\cX_i,\cF_i)$, $i=1,\ldots,n$, are measurable spaces, then $\bigotimes_{i=1}^n \cF_i$ is the product $\sigma$-algebra on $\prod_{i=1}^n \cX_i$.
\section{Problem formulation} \label{sec:prob_form}

\subsection{System Model}

\begin{figure}
	\centering
	\includegraphics[width=0.9\linewidth, trim=0.6cm 0cm 0.55cm 0cm, clip]{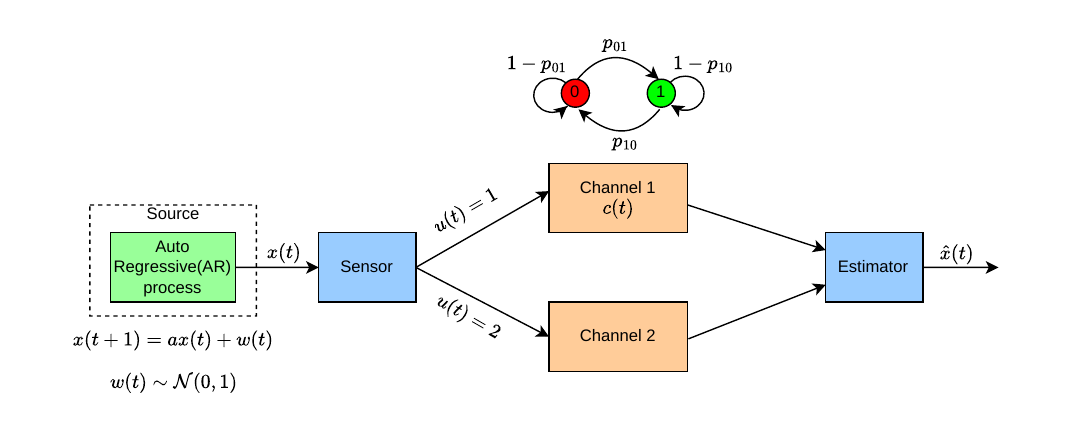}
	\caption{Remote state estimation setup with hybrid communication channels.}
	\label{fig:rsehybridv2}
\end{figure}

We consider the remote state estimation setup shown in Fig. 1, which consists of an autoregressive (AR) source, a sensor, and a remote estimator. The discrete-time AR process $\{x(t)\}_{t \in \bZ_{\geq 0}}$ evolves as follows:
\al{\label{source}
x(t+1) = a x(t) + w(t),~t = 0, 1, \ldots,
}
where $x(0) \sim \cN(0,1)$, $a, x(t) \in \bR$, and $\{w(t)\}_{t \in \bZ_{\geq 0}}$ is an i.i.d. Gaussian noise process with $w(t) \sim \cN(0,1)$. The sensor observes the source perfectly and and sends encoded observations to a remote estimator over one of two communication channels. At each time $t$, the sensor chooses either the fast but unreliable channel, denoted by $u(t
)=1$, or the slow but reliable channel, denoted by $u(t)=2$. The two channels differ in their reliability and delay characteristics~\cite{aziz2016architecture}. We call the fast but unreliable channel as ``Channel 1,'' and slow but reliable channel as ``Channel 2.'' In particular, Channel 1 channel has a delay of $1$ time unit, whereas Channel 2 delivers packets with probability $1$ but has a fixed delay of $d$ time units. The coexistence of these heterogeneous delays is a key feature of our model and plays an important role in the scheduling problem.

Channel 1 is modeled as a Gilbert-Elliott (GE) channel~\cite{gilbert1960capacity}. Let $c(t) \in \{0,1\}$ denote its state at time $t$. The state $c(t)=1$ represents a good channel state, in which a packet transmitted by the sensor at time $t$ is delivered successfully to
the estimator after one time step. The state $c(t)=0$ represents a bad channel state, in which a transmitted packet is dropped. Let $p_{01} \in (0,1)$ denote the probability that the channel transitions to state $1$ at the next time step given that it is currently in state $0$, and let $p_{10} \in (0,1)$ denote the probability that it transitions to state $0$ given that it is currently in state $1$. We assume that the sensor observes the channel state with a one-step delay, i.e., at time $t$, it knows $c(t-1)$.

Channel 2, on the other hand, delivers each packet to the estimator after a deterministic delay of $d$ time units. We let $r(t) \in \{0,1,\ldots, d-1\}$ denote the remaining time left for the packet currently in transit over Channel 2 to get delivered to the remote estimator. The value $r(t) = 0$ indicates that Channel 2 is available for a new transmission. Note that since Channel 1 has a delay of $1$ time unit, it is always available for transmission at each $t$. At each time $t$, the sensor knows $r(t)$. We assume that the two channels cannot be used simultaneously for transmission. Hence, if Channel 2 is occupied with a packet in transit, i.e., $r(t) \geq 1$, the sensor cannot initiate a new transmission. and we set $u(t) = 0$. Thus, $u(t) = 0$ corresponds to the situation where the sensor remains idle while the previously transmitted packet over Channel 2 is yet to be delivered to the estimator. We assume that each transmission over either channel incurs an energy cost of $\lambda > 0$. Moreover, we assume the non-trivial case of $d \ge 2$. This is because when $d = 1$, then both the channels have the same transmission delay and incur the same energy costs with Channel 2 being perfectly reliable. Hence, it is always optimal to use Channel 2.

Let $\hat{x}(t)$ denote the estimate at the remote estimator. It is updated recursively as follows: for $t \geq 0$ we have,
\al{\label{xhat_evolv}
\hat{x}(t+1) =
\begin{cases}
    a x(t) &\mbox{ if } u(t)c(t) = 1,\\
    a^{d} x(t-d+1) &\mbox{ if } r(t) = 1,\\
    a \hat{x}(t) &\mbox{ otherwise},
\end{cases}
}
where $\hat{x}(0) =0$. The second case corresponds to the event that a packet transmitted over Channel 2 at time $t-d+1$ is delivered to the estimator at time $t+1$. Thus, the term $x(t-d+1)$ is well-defined whenever this event occurs.~Let $\D(t) := x(t) - \hat{x}(t)$ denote the estimation error at time $t$. From~\eqref{source} and~\eqref{xhat_evolv}, the error evolves as
\al{\label{e_evolve}
\D(t+1) = 
\begin{cases}
    w(t) &\mbox{ if } u(t)c(t) = 1,\\
    \sum_{s= t-d+1}^{t} a^{t-s} w(s) &\mbox{ if } r(t) = 1,\\
    a\D(t) +w(t) &\mbox{ otherwise}.
\end{cases}
}

\subsection{Optimization Problem}

Let $\cI(t)$ denote the information available to the sensor at time $t$, i.e., $\cI(t) = (\{x(s),r(s)\}_{s=0}^{t}, \{c(s),u(s)\}_{s=0}^{t-1})$. We use $\pi = (\pi_0, \pi_1, \ldots)$ to denote a scheduling policy. We say a scheduling policy $\pi$ is admissible if: (i) $u(t) = \pi_t(\cI(t))$, i.e., where $\pi_t$ is a measurable function such that $\pi_t : \cI(t) \mapsto u(t) \in \{0,1,2\}$, and (ii) $u(t) = 0$ whenever $r(t) \geq 1$, i.e., no transmission occurs when Channel 2 is busy. 

We are interested in designing a scheduling policy $\pi$ that solves the following infinite horizon average cost problem: 
\al{
\min_{\pi} \limsup_{T \to \infty} \frac{1}{T} \bE_{\pi} \left[ \sum_{t=0}^{T-1} \left( \Delta(t)^2 + \lambda \mathds{1}(u(t) \in \{1,2\}) \right) \right], \label{opt_prob}
}
where $\bE_{\pi}$ denotes expectation with respect to the probability measure induced by an admissible policy $\pi$, and the $t$-th summand above denotes the instantaneous cost at time step $t$.

\section{MDP formulation and Value Iteration}

We first formulate the optimization problem~\eqref{opt_prob} as an MDP in Section~\ref{subsec:MDP formulation}. Next, we introduce the $\beta$-discounted cost problem~\cite{Hernandez2012discrete} in Section~\ref{subsec:discount MDP}, which is easier to analyze than the infinite horizon average cost problem. Since the state-space is uncountable and the instantaneous cost function is unbounded, it is not immediately clear whether the value iteration (VI) algorithm can be applied to solve the $\beta$-discounted problem. Nevertheless, we show that, under a mild assumption on the AR process parameter, $a$, and the parameters of Channel 1, the VI algorithm converges to an optimal solution of the $\beta$-discounted problem.

\subsection{MDP Formulation} \label{subsec:MDP formulation}
The optimization problem~\eqref{opt_prob} can be modeled equivalently as the following MDP.

\begin{enumerate}
    \item[i)] The \emph{state} of the MDP at time $t$ is
    \al{\label{state_MDP}
    s(t) = (\D(t),r(t),c(t-1)),
    }
    where $\D(t) \in \bR$ is the estimation error at time $t$, $r(t) \in \{0,1,\ldots,d-1\}$ is the remaining time left for the in transit over Channel 2 to get delivered at time $t$, and $c(t-1) \in \{0,1\}$ is the GE channel state of Channel 1 at time $t-1$. We denote the state-space by $\mathcal{S} := \bR \times \{0,1,\ldots,d-1\} \times \{0,1\}$.
    
    \item[ii)] The \emph{decision} taken by the sensor at time $t$ is denoted by $u(t) \in \mathcal{U}(s)$, where $\mathcal{U}(s)$ is the set of feasible actions that can be taken when the state of the MDP is $s$. Specifically, for $s = (\D, r, c)$,
    \[
    \cU(s) =
    \begin{cases}
        \{0\} & r \geq 1,\\
        \{1,2\} & r = 0.
    \end{cases}
    \]
    Let $\cU := \{0,1,2\}$ denote the overall action set.
    \item[iii)] The \emph{instantaneous cost} incurred in state $s=(\D,r,c)$ when action $u$ is applied is given by
    \al{
    \ell(s, u) := \D^2 + \lambda \lf(\mathds{1} (u=1) + \mathds{1} (u = 2)\rt). \label{instan_cost}
    }

    \item[iv)] Let $s_+ = (\D_+, r_+, c_+)$. We use $p(s_+ \mid s;u)$ to denote the \emph{transition density function} from the current state $s$ to the next state $s_+$ when decision $u$ is taken. These density functions for each $u \in \cU$ are illustrated in Table~\ref{tab:df} with $\sigma^2_d := \sum_{k=0}^{d-1} (a^2)^k$.
\end{enumerate}

\begin{table}[ht]
\caption{Transition density function\label{tab:df}}
    \renewcommand{\arraystretch}{1.5}
    \centering
    \begin{tabular}{|p{1em}|p{1.9cm}|p{17em}| } 
         \hline
         $u$ & $s$ & $p(s_+ \mid s;u)$\\
         \hline
         \multirow{3}{9em}{0} & $(\D,r,c)$, $r \geq 2$ & $p_{cc_+} \eta(\D_+;a\D, 1) \delta_{r-1}(r_+) \delta_{c_+}(c_+)$\\
         & $(\D,1,c)$ & $p_{cc_+} \eta(\D_+;0, \sigma^2_d) \delta_{0}(r_+) \delta_{c_+}(c_+)$ \\
         \hline
         \multirow{2}{6em}{1} & \multirow{2}{4em}{$(\D,0,c)$} & $p_{c0} \eta(\D_+;a\D, 1) \delta_{0}(r_+) \delta_{0}(c_+)$ \\
         & & $p_{c1} \eta(\D_+;0, 1) \delta_{0}(r_+) \delta_{1}(c_+)$\\
         \hline
         2 & $(\D,0,c)$ & $p_{cc_+} \eta(\D_+;a\D, 1) \delta_{d-1}(r_+) \delta_{c_+}(c_+)$\\
         \hline
    \end{tabular}
    
\end{table}

\subsection{Discounted MDP}\label{subsec:discount MDP}

To solve~\eqref{opt_prob}, we begin by analyzing the corresponding $\beta$-discounted cost problem as described next. In Section~\ref{sec:struc_res}, we establish key structural properties of an optimal policy for the $\beta$-discounted cost problem. We then extend these results to the average cost problem~\eqref{opt_prob}.

We consider the following $\beta$-discounted MDP:

\al{
\min_{\pi} \bE_{\pi} & \left[\sum_{t = 0}^{\infty} \beta^t \left(\D(t)^2 + \lambda \mathds{1}(u(t) \in \{1,2\}) \right)\right], \label{discount_opt_prob}
}
where $\beta \in (0,1)$ is the discount factor and $\pi$ is an admissible policy corresponding to the above $\beta$-discounted optimization problem. The optimization problem in \eqref{discount_opt_prob} is defined for an initial state $s(0)= (\D(0), r(0), c(-1)) \in \cS$. The initial estimation error $\D(0)$ satisfies $\D(0) \sim \cN(0,1)$ since the initial state satisfies $x(0) \sim \cN(0,1)$ and the initial estimate is taken to be $\hat{x}(0)=0$. Moreover, $r(0) \in  \{0,1,\ldots ,d-1\}$, and $c(-1) \in \{0,1\}$ represents the initial Channel 2 delay and Channel 1 state, respectively.

\subsection{Value Iteration} \label{subsec:VI}
We now show that, under the following assumption on the AR process parameter and the parameters of Channel 1, we can use the VI algorithm to solve the $\beta$-discounted MDP~\eqref{discount_opt_prob}.
\begin{assumption}\label{ass:sys_chn}
    The AR process parameter $a$ and Channel 1 transition probabilities satisfy
    \nal{
    a^2 (1 - p_{01}) < 1.
    }
\end{assumption}

\begin{remark}
    Assumption~\ref{ass:sys_chn} is a mean-square stability condition as shown in~\cite[Theorem 8]{huang2007stability}. It ensures that the expected squared estimation error remains bounded and prevents the error variance from diverging during consecutive Channel 1 transmission failures. Similar assumption is also considered in several works, see~\cite{ chakravorty2019remote, dutta2023optimal,wu2017kalman}.
\end{remark}

To use value iteration, we first verify that the discounted MDP~\eqref{opt_prob} satisfies the required conditions~\cite[p.~46]{Hernandez2012discrete}. For this purpose, we define the infinite horizon expected total $\beta$-discounted cost given an initial state $s$, and instantaneous cost $\ell$~(cf.~\eqref{instan_cost}) as follows:
\al{
J\ub(s;\pi) := \bE_{\pi} \lf[\sum_{t = 0}^{\infty} \beta^t \ell(s(t),u(t))\rt]. \label{j_beta}
}
\begin{lemma} \label{lemma:VI_assump}
    Under Assumption~\ref{ass:sys_chn}, the $\beta$-discounted cost MDP (cf.~\eqref{discount_opt_prob}) satisfies the following properties:
    \begin{itemize}
        \item[P1)]  The instantaneous cost function $\ell(s,u)$ in \eqref{instan_cost} is continuous, non-negative, and inf-compact on $\mathcal{K}$, where $\mathcal{K} := \{(s,u) \mid s \in \mathcal{S}, u \in \mathcal{U}(s)\} \subseteq \mathcal{S} \times \cU$ is the set of feasible state-action pairs that is a measurable subset of $\mathcal{S} \times \cU$.

        \item[P2)] The transition kernel $\cP(\cdot \mid s,u)_{(s,u) \in \mathcal{K}}$ that describes the transition probabilities from the current state $s$ to the next state under the application of action $u$, is strongly continuous in $(s,u)$.

        \item[P3)] There exists a policy $\pi$ such that $J\ub(s,\pi) < \infty$~(cf.~\eqref{j_beta}), for every $s \in \cS$.
    \end{itemize}
\end{lemma}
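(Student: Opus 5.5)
We verify P1--P3 in turn. Throughout we use the fact that $\cS = \bR \times \{0,\ldots,d-1\}\times\{0,1\}$ is a locally compact separable metric space (a finite disjoint union of copies of $\bR$) and that $\cU$ is finite.

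\emph{P1.} $\mathcal{K}$ is a finite union of closed sets of the form $\bR\times\{r\}\times\{c\}\times \cU(s)$, so it is Borel. The cost $\ell(s,u)=\D^2+\lambda\mathds{1}(u\neq 0)$ is continuous in $\D$ and non-negative. For inf-compactness, fix $M\ge 0$. The sublevel set $\{(s,u)\in\mathcal{K}:\ell(s,u)\le M\}$ is contained in $[-\sqrt M,\sqrt M]\times\{0,\ldots,d-1\}\times\{0,1\}\times\cU$. It is closed, since $\ell$ is continuous and $\mathcal{K}$ is closed. Being a closed subset of a compact set, it is compact.

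\emph{P2.} We must show that for every bounded measurable $v$ on $\cS$, the map $(s,u)\mapsto\int v(s_+)\,\cP(ds_+\mid s,u)$ is continuous on $\mathcal{K}$. Since $r,c,u$ are discrete, it suffices to establish continuity in $\D$ for each fixed $(r,c,u)$. From Table~\ref{tab:df}, every kernel is a finite mixture, with weights independent of $\D$, of Gaussian densities in $\D_+$ whose mean is either $a\D$ or $0$. The components with mean $0$ do not depend on $\D$. For the components with mean $a\D$, observe that $\D_n\to\D$ implies $\eta(\cdot;a\D_n,1)\to\eta(\cdot;a\D,1)$ pointwise. By Scheffé's lemma this convergence also holds in $L^1$. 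Hence $\big|\int v\,(\eta_n-\eta)\big|\le\|v\|_\infty\|\eta_n-\eta\|_1\to 0$, which proves strong continuity.

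\emph{P3.} This is the main step. We exhibit a concrete policy and bound its discounted cost. The natural candidate is the policy $\pi^1$ that always uses Channel 1 whenever $r(t)=0$ and stays idle otherwise. Under $\pi^1$, after at most $d-1$ initial steps (if $r(0)\ge1$), Channel 2 is never used again and the error variance is reset to a finite constant $\sigma_d^2$. From then on, $\D(t+1)=w(t)$ if $c(t)=1$ and $\D(t+1)=a\D(t)+w(t)$ if $c(t)=0$. Let $\tau$ be the time since the last success. Conditioned on the channel path, $\bE[\D(t)^2]\le a^{2\tau}\bE[\D(t_0)^2]+\sum_{k<\tau}a^{2k}$. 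Conditional on being in state $0$, the channel stays bad with probability $1-p_{01}$ per step, so $\bP(\tau\ge k)\le C(1-p_{01})^{k-1}$. Assumption~\ref{ass:sys_chn} then gives
\nal{\bE[\D(t)^2]\le C\sum_{k\ge0}(a^2(1-p_{01}))^{k}\big(\D(0)^2+\sigma_d^2+1\big)\,/\,\big(1-a^2\big)_+ ,}
or more precisely a bound of the form $K(1+\D(0)^2)$ uniformly in $t$, obtained by summing the geometric series in $a^2(1-p_{01})$. When $a^2\ge1$ we bound $\sum_{k<\tau}a^{2k}\le\tau a^{2\tau}$, which remains summable against $(1-p_{01})^\tau$. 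The cleanest way to make this rigorous is to define $V_t=\bE[\D(t)^2\mid c(t-1)]$ for $c\in\{0,1\}$ and write a two-dimensional linear recursion. Its matrix has spectral radius less than one exactly under Assumption~\ref{ass:sys_chn} (the bad-state self-loop coefficient is $a^2(1-p_{01})$, and the good-state row resets). This yields $\sup_t\bE_{\pi^1}[\D(t)^2]\le K(1+\D(0)^2)$. Therefore $J\ub(s;\pi^1)\le \sum_t\beta^t\big(K(1+\D^2)+\lambda\big)<\infty$ for every $s$. The technical obstacle is controlling the growth during bad-state runs with the mixed coefficients; the recursion/spectral-radius argument is what we would try first.
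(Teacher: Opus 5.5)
Your proposal is correct and follows essentially the paper's route: P1 from continuity and finiteness of the action set, P2 from pointwise continuity in $\Delta$ of the Gaussian densities plus Scheff\'e's lemma (which is what the paper's cited Example C.6 encodes), and P3 with the same policy (Channel 1 whenever $r=0$, idle otherwise), controlled by the geometric tail $(1-p_{01})^{k}$ of bad-state runs against the growth $a^{2k}$ (the same mechanism as the paper's product-of-$a(m)$ computation). If you write out the linear-recursion variant, use the joint moments $\bE[\Delta(t)^2\mathds{1}(c(t-1)=c)]$ rather than conditional expectations: these give the time-invariant triangular system with first row $(a^2(1-p_{01}),\,a^2p_{10})$ and zero second row that you describe, whose spectral radius is exactly $a^2(1-p_{01})<1$.
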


\begin{proof}
	P1) follows from the definition of $\ell$ in~\eqref{instan_cost} and since the action set $\cU$ is finite.
	
	P2) Let the state-space $\mathcal S$ be endowed with the product $\sigma$-algebra $\bB(\bR) \otimes 2^{\{0,\ldots,d-1\}} \otimes 2^{\{0,1\}}$. From Table~\ref{tab:df}, for each $(s,u)$, the transition kernel admits a density $p(s_+ \mid s;u)$ with respect to the product measure consisting of the Lebesgue measure on $\bR$ and the counting measure on $\{0,\ldots,d-1\} \times \{0,1\}$, i.e., for any $\cB \in \bB(\bR)$, 
	\nal{
	 & \cP((\D_+, r_+, c_+) \in (\cB \times \{0,1,\ldots,d-1\} \times \{0,1\}) \mid s,u) \\
	& = \sum_{r_+,c_+} \int_{\cB} p(s_+ \mid s,u) \, d\Delta_+.
	}
	
	Now, we fix $s_+ = (\Delta_+, r_+, c_+) \in \mathcal S$. For each $u \in \mathcal U$, the density $p(s_+ \mid s;u)$ (cf. Table~\ref{tab:df}) is given by Gaussian densities in $\Delta_+$ whose mean and variance depend continuously on $\Delta$, along with Dirac terms corresponding to the discrete components $(r_+, c_+)$.
	
	For any fixed $(r,c)$, the mapping $\Delta \mapsto p(s_+ \mid s;u)$ is continuous. Since the discrete components take values in finite sets, it follows that $(s,u) \mapsto p(s_+ \mid s;u)$ is continuous for every fixed $s_+$. 
	Hence, by~\cite[Example C.6]{Hernandez2012discrete}, the transition kernel $\cP(\cdot \mid s,u)$ is strongly continuous.
	
    P3) We divide the proof into the following two cases. For an initial state with $r(0) = r$, let $\pi^{(01)}$ denote the policy that applies $u = 0$ whenever $r \ge 1$, and $u = 1$ whenever $r = 0$.
    
    Case a): $r(0) = 0$, i.e., the system starts with an initial state $(\D, 0, c)$. 
    Then we can write $\D(t)$ as follows: 
    \nal{
    \D(t) = \lf(\prod_{m = 0}^{t-1} a(m)\rt) \D + \sum_{k = 0}^{t-1} \lf(\prod_{m = k+1}^{t-1} a(m)\rt) w(k),
    }
    with the understanding that if $m > t-1$, then $\lf(\Pi_{m = k + 1}^{t-1} a(m)\rt) = 1$, and where
    \al{\label{eq:a(m)}
    a(m) = 
    \begin{cases}
        a &\mbox{ if } c(m) = 0,\\
        0 &\mbox{ if } c(m) = 1.
    \end{cases}
    }
    Now, since $\{w(k)\}_{k \in \bZ_{\geq 0}}$ are i.i.d., ${\cal N}(0,1)$-distributed, and independent of $\{a(k)\}_{k \in \bZ_{\geq 0}}$, we have that
    \al{
    & \bE\lf[\beta^t \D(t)^2\rt] \notag \\
    =~& \beta^t \D^2 \bE\lf[\lf(\prod_{m = 0}^{t-1} a(m)\rt)^2\rt]   + \beta^t  \bE\lf[\sum_{k = 0}^{t-1} \lf(\prod_{m = k+1}^{t-1} a(m)\rt)^2\rt] \notag \\
    =~& \beta^t \D^2 \bE\lf[\bE\lf[\lf(\prod_{m = 0}^{t-1} a(m)\rt)^2 \Big| c(0)\rt]\rt] \notag \\
    & \qquad + \beta^t \bE\lf[\sum_{k = 0}^{t-1} \bE\lf[\lf(\prod_{m = k+1}^{t-1} a(m)\rt)^2 \Big| c(k)\rt]\rt] \notag \\
    =~& \beta^t \D^2 \bP(c(0) = 0) a^{2t} (1- p_{01})^{t-1} \notag \\
    & \quad + \beta^t  \sum_{k=0}^{t-1} \bP(c(k) = 0) a^{2(t-k-1)} (1-p_{01})^{t-k-2} \label{eq:for_lower_bd} \\
    \leq~& \beta^t \D^2 \bP(c(0) = 0) a^{2t} (1- p_{01})^{t-1} \notag \\
    & \quad + \beta^t \frac{1}{(1-p_{01}) (1 - a^2(1-p_{01}))}, \notag
    }    
    where the second equality follows from law of total expectation. The third equality follows since $\prod_{m=k}^{t-1} a(m)$ is either equal to 0 if $c(k) = 1$, or is equal to $a^{2(t-k)}$ if $c(m) = 0$ for all $m \in \{k, k+1, \ldots, t-1\}$. The last inequality follows since $\bP(c(k) = 0) \leq 1$ and $a^2(1-p_{01}) < 1$ (Assumption~\ref{ass:sys_chn}). Now since the cost per transmission is $\lambda$, summing them over $t$ we have the following bound for an initial state $s$ which follows from~\eqref{j_beta}:
    \al{
    & J\ub(s;\pi^{(01)}) \leq \frac{\D^2}{(1-p_{01}) (1 - \beta a^2(1-p_{01}))} \notag \\
    & \qquad + \frac{1}{1-\beta}\lf[\frac{1}{(1-p_{01}) (1 - a^2(1-p_{01}))} + \lambda \rt] \label{eq:upper_bound}.
    }

    Case b): $r(0) = r \geq 1$, i.e., the system starts with an initial state $(\D,r,c)$. Then, $\D(t)$ for $t < r $ satisfies the following
    \al{
    \D(t) = a^t \D(0) + \sum_{k = 0}^{t-1} a^{t-1-k} w(k)
    }
    where $\D(0) \sim \cN(0,1)$, and for $t > r$ we have that,
    \nal{
    \D(t) = \lf(\prod_{m = r}^{t-1} a(m)\rt) \D(r) + \sum_{k = r}^{t-1} \lf(\prod_{m = k+1}^{t-1} a(m)\rt) w(k),
    }
    where $a(m)$ is as defined in~\eqref{eq:a(m)}, and
    \al{
    \D(r) = \sum_{t=0}^{r-1} a^{r-1-t} w(t). \label{eq:d(r)}
    }
    Next, it follows from~\eqref{eq:d(r)} that $\bE[\D(r)^2] = (1-(a^2)^r)/(1-a^2)$. Then, for $t < r$ we have that,
    \al{
    \bE\lf[\beta^t \D(t)^2\rt] &= \beta^t a^{2t} \bE\lf[\D(0)^2\rt] + \sum_{k = 0}^{t-1} \beta^k a^{2(t-1-k)} \bE\lf[w(k)^2\rt] \notag \\
    & = (\beta a^2)^t + \frac{1 - (\beta a^2)^t}{1- \beta a^2} \notag \\
    & = \frac{1 - (\beta a^2)^{t+1}}{1 - \beta a^2}, \label{eq:b1}
    }
    and for $t > r$, similar to Case a) we have,
    \al{
    &\bE\lf[\beta^t \D(t)^2\rt] = \beta^t \bE\lf[\D(r)^2\rt] \bE\lf[\lf(\prod_{m = r}^{t-1} a(m)\rt)^2\rt] \notag \\
    & + \beta^t \bE\lf[\sum_{k = 0}^{t-1} \lf(\prod_{m = k+1}^{t-1} a(m)\rt)^2\rt] \notag \\
    & \leq \beta^t \frac{1 + (1-(a^2)^r)/(1-a^2)}{(1-p_{01}) (1 - a^2(1-p_{01}))}. \label{eq:b2}
    }
    Thus, combining~\eqref{eq:b1}, ~\eqref{eq:b2} with~\eqref{j_beta}, we have the following bound for initial state $s$:
    \nal{
    & J\ub(s;\pi^{(01)}) = \sum_{t = 0}^{r-1} \bE[\beta^t \D(t)^2] \notag \\
    & + \beta^r \bE[\D(r)^2] + \sum_{t=r+1}^{\infty} \bE[\beta^t \D(t)^2] + \frac{\lambda}{1-\beta}\notag \\
    & \leq \frac{1}{1 - \beta a^2} \lf[r - \beta a^2 \lf(\frac{1- (\beta a^2)^r}{1 - \beta a^2}\rt)\rt] + \frac{1 - (a^2)^r}{1- a^2}\\
    & + \frac{1}{1 - \beta}\lf[\frac{(1-(a^2)^r)/(1-a^2) + 1}{(1-p_{01}) (1 - a^2(1-p_{01}))} + \lambda \rt]. 
    }
    This completes the proof.
\end{proof}

Lemma~\ref{lemma:VI_assump} allows us to apply the VI algorithm for solving the MDP~\eqref{discount_opt_prob}, which we will now describe. For this, we define the VI functions $\{V\ub_n(\cdot)\}_{n \in \bZ_{\geq 0}}$ as follows: for $s =(\D,r,c) \in \cS$, 
\al{
V_0(s) = 0, \label{eq:V_0}
}
and,
\al{
V\ub_n(s) = \min_{u \in \cU(s)} Q\ub_n(s;u), \label{eq:V_n}
}
where 
\al{
Q\ub_n(s;u) = \ell(s,u) + \beta \bE_{s_+ \sim p(\cdot \mid s; u)}\lf[V\ub_{n-1}(s_+)\rt], \label{eq:Q_n}
}
where $s_+ \sim p(\cdot \mid s; u)$ indicates that the next state $s_+$ is sampled from the probability distribution $p(\cdot \mid s; u)$.

We use $V\ub(s)$ to denote the value function for an initial state $s$, i.e.,
\al{
V\ub(s) = \min_{\pi} J\ub(s;\pi), \label{def:V}
}
where $J\ub(s;\pi)$ is defined in~\eqref{j_beta}. 

The following proposition shows the convergence of the VI algorithm to $V\ub$, and establishes the existence of an optimal deterministic stationary policy for the MDP~\eqref{discount_opt_prob}. The proof of this proposition follows from Lemma 4.2.8 and Theorem 4.2.3 of~\cite{Hernandez2012discrete} which uses Lemma~\ref{lemma:VI_assump}.

\begin{proposition} \label{prop:VI}
Suppose Assumption~\ref{ass:sys_chn} holds. Then for the MDP~\eqref{discount_opt_prob} we have,
    \begin{enumerate} 
        \item[a)] The VI algorithm~\eqref{eq:V_n}-\eqref{eq:Q_n} converges to $V\ub$, i.e., $\lim_{n \to \infty} V\ub_n(s) = V\ub(s)$ for each $s \in \cS$.

        \item[b)] The value function $V\ub$ in ~\eqref{def:V} satisfies the following for each $s \in \cS$:
        \al
        {
            V\ub(s) = \min_{u \in \cU(s)} Q\ub(s;u), \label{eq:V}
        }
        where 
        \al
        {
            & Q\ub(s;u)  = \ell(s,u) + \beta \bE_{s_+ \sim p(\cdot \mid s; u)}\lf[V\ub(s_+)\rt]. \label{eq:Q}
        }

        \item[c)] There exists a deterministic policy that attains the minimum in~\eqref{eq:V} for each $s \in \cS$. Such a policy solves~\eqref{def:V} for each $s \in \cS$ and is therefore optimal for the MDP~\eqref{discount_opt_prob}.
    \end{enumerate}
\end{proposition}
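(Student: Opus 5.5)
The plan is to verify that the hypotheses of the general theory in~\cite{Hernandez2012discrete} for Borel MDPs with nonnegative unbounded costs (Assumption 4.2.1 there) are exactly the properties P1)--P3) established in Lemma~\ref{lemma:VI_assump}. Then I would invoke Lemma 4.2.8 for the convergence of value iteration and Theorem 4.2.3 for the optimality equation and the existence of a measurable selector.

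\textbf{Part a).} Starting from $V_0 \equiv 0$, I first show by induction that $\{V\ub_n\}$ is nondecreasing in $n$. This uses $\ell \ge 0$ and the monotonicity of the operator $T v(s) = \min_{u \in \cU(s)}[\ell(s,u) + \beta \bE v(s_+)]$. Next I show each $V\ub_n$ is lower semicontinuous and nonnegative. Strong continuity of the kernel (P2) maps bounded-below l.s.c.\ functions to l.s.c.\ functions: truncate $v \wedge k$, which is bounded and l.s.c., and apply monotone convergence. Inf-compactness of $\ell$ (P1), together with the finiteness of $\cU(s)$, then guarantees that the minimum is attained by a measurable selector and that $T v$ is again l.s.c.

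Hence $V\ub_n \uparrow V_\infty$ for some l.s.c.\ limit $V_\infty$. Since $V\ub_n$ is the optimal $n$-stage cost, $V_\infty \le V\ub$. Passing to the limit in the recursion requires exchanging the limit and the minimum. Because $\cU(s)$ is finite and the sequence is monotone, monotone convergence inside the expectation gives $V_\infty = T V_\infty$. Finally, I take a selector $f$ attaining the minimum in $T V_\infty$ and iterate the equation. Since the costs are nonnegative, this gives $J\ub(s;f) \le V_\infty(s)$, so $V_\infty \ge V\ub$, and hence $V_\infty = V\ub$.

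\textbf{Parts b) and c).} Part b) is exactly $V\ub = T V\ub$, which was established above. Part c) follows from the selector argument just given. The function $V\ub$ is finite by P3), since $V\ub \le J\ub(\cdot;\pi^{(01)}) < \infty$, so the minimization is well defined. The selector $f$ is deterministic, stationary and measurable, and it satisfies $J\ub(s;f) = V\ub(s)$.

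\textbf{Main obstacle.} The delicate step is the identification $V_\infty = V\ub$, which needs finiteness (P3) and the attainment of the minimum. Here attainment is straightforward, because the action sets are finite and depend only on the discrete component $r$. I would therefore mostly cite~\cite{Hernandez2012discrete} and confirm that $\mathcal{K}$ is closed in $\cS\times\cU$, which holds because $\{r=0\}$ and $\{r\ge1\}$ are clopen.
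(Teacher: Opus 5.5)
Your proposal is correct and follows essentially the same route as the paper. The paper verifies Assumption 4.2.1 of~\cite{Hernandez2012discrete} through P1)--P3) of Lemma~\ref{lemma:VI_assump} and then cites Lemma 4.2.8 and Theorem 4.2.3 there; your sketch (monotone value iteration, lower semicontinuity, selector iteration) just unpacks those cited results in this finite-action setting, and as a small correction, the selector argument giving $V_\infty \ge V\ub$ works with extended-real values, so P3) is needed only to make $V\ub$ finite, not for the identification $V_\infty = V\ub$ itself.
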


\section{Structural results} \label{sec:struc_res}

In this section, we first establish structural properties of an optimal policy for the $\beta$-discounted problem~\eqref{discount_opt_prob}. We then show in Section~\ref{subsec:struct_avg} that these structural properties extend to the average cost problem~\eqref{opt_prob}. 

We begin by defining threshold-type policies for~\eqref{opt_prob} and~\eqref{discount_opt_prob}. Recall that the sensor can attempt
transmission over Channel 1, $u(t)=1$, or Channel 2, $u(t)=2$, only when $r(t)=0$. This motivates the following policy class.

\begin{definition}
	\textbf{(i) Non-decreasing threshold-type policy in $\Delta$:} 
	A scheduling policy $\pi : \mathcal{S} \to \{1,2\}$ is said to be non-decreasing threshold-type in $\Delta$ if, for each $c\in\{0,1\}$, there exists a threshold $\Delta^\ast(c)\geq 0$ such that, for $(\D,r,c)\in\cS$,
	\[
	\pi(\Delta,r,c) =
	\begin{cases}
        0 & \text{if } r \ge 1, \\
		1 & \text{if } r=0 \text{ and } |\Delta| \leq \Delta^\ast(c), \\
		2 & \text{if } r=0 \text{ and } |\Delta| > \Delta^\ast(c).
	\end{cases}
	\]
	
	\textbf{(ii) Non-increasing threshold-type policy in $\Delta$:} 
	A scheduling policy $\pi : \mathcal{S} \to \{1,2\}$ is said to be non-increasing threshold-type in $\Delta$ if, for each $c\in\{0,1\}$, there exists a threshold $\Delta^\ast(c)\geq 0$ such that, for $(\D,r,c)\in\cS$,
	\[
	\pi(\Delta,r,c) =
	\begin{cases}
        0 & \text{if } r \ge 1, \\
		2 & \text{if } r=0 \text{ and } |\Delta| \leq \Delta^\ast(c), \\
		1 & \text{if } r=0 \text{ and } |\Delta| > \Delta^\ast(c).
	\end{cases}
	\]
\end{definition}

\subsection{Analysis of the Discounted MDP} \label{subsec:struct_dis}

In this section, we show that the discounted MDP~\eqref{discount_opt_prob} admits an optimal threshold-type policy. We begin by defining the following four distinct regions to analyze the structural properties of an optimal policy for the MDP~\eqref{discount_opt_prob}. For ease of notation, we use $\gamma_1 := \beta a^2 (1-p_{01}), \gamma_2:= \sum_{i=0}^{d-1} \lf(\beta a^2\rt)^i$.

\begin{definition} \label{def:4reg_dis}
    Consider the $\beta$-discounted MDP with discount factor $\beta \in (0,1)$ (cf.~\eqref{discount_opt_prob}), and suppose that Assumption~\ref{ass:sys_chn} holds. 
    We consider the parameter space $(a, d, p_{01}, p_{10}) \in  \bR \times \{2,3,\ldots\} \times (0,1)^2$, where $a \in \bR$ is the AR process parameter~(cf.~\eqref{source}), $d \in \{2,3,\ldots\}$ is Channel 2 delay, and $(p_{01}, p_{10}) \in (0,1)^2$ are the parameters of Channel 1. We partition the admissible parameter space $\cD\ub := \{(a, d, p_{01}, p_{10}): \g_1 < 1\}$ into four regions. We note that the condition $\g_1 < 1$ follows from Assumption~\ref{ass:sys_chn}. The regions are defined as follows:
    \nal{
    R\ub_1 & := \{(a,d,p_{01},p_{10}) : \\
    &\quad F^{(\beta)}(a,d,p_{01},p_{10}) \le 0,\;
        H^{(\beta)}(a,d,p_{01},p_{10}) \le 0 \}, \\
    R\ub_2 & := \{(a, d,p_{01}, p_{10}): \\
    &\quad F^{(\beta)}(a, d,p_{01}, p_{10}) \le 0, H^{(\beta)}(a, d,p_{01}, p_{10}) > 0 \}, \\
    R\ub_3 & := \{(a, d,p_{01}, p_{10}): \\
    & \quad F^{(\beta)}(a, d,p_{01}, p_{10}) > 0, G^{(\beta)}(a, d, p_{01}, p_{10}) \leq 0 \}, \\
    R\ub_4 & := \{(a, d,p_{01}, p_{10}): \\
    & \quad F^{(\beta)}(a, d,p_{01}, p_{10}) > 0, G^{(\beta)}(a, d,p_{01}, p_{10}) > 0 \},
    }
    where the functions $F\ub(\cdot)$, $H\ub(\cdot)$, $G\ub(\cdot) : \bR \times \{2,3,\ldots\} \times (0,1)^2 \mapsto \bR$ are defined as follows:
    \nal{
    & F^{(\beta)}(a, d,p_{01}, p_{10}) := \sum_{i=0}^{\infty} \g_1^i - \g_2, \\
    & H^{(\beta)}(a, d,p_{01}, p_{10}) := 1 + \beta a^2 p_{10} \sum_{i=0}^{\infty} \g_1^i - \g_2, \\
    & G^{(\beta)}(a, d,p_{01}, p_{10}) := 1 + \beta a^2 p_{10} \g_2 - \g_2.
    }
\end{definition}
The regions $R_1\ub, R_2\ub, R_3\ub, R_4\ub$ are illustrated in Fig.~\ref{fig:regions} for $\beta = 0.99$.

\begin{figure}[htbp]
	\begin{centering}
		\includegraphics[scale=0.45]{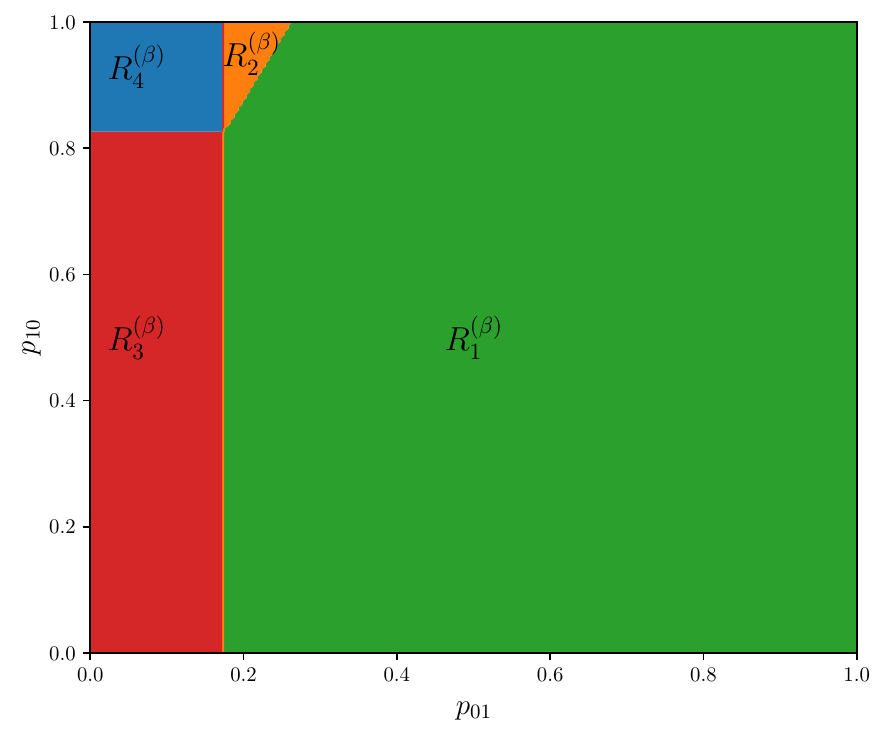} 
		\par\end{centering}
	\caption{Partition of the parameter space into four regions $R_1^{(\beta)}, R_2^{(\beta)}, R_3^{(\beta)}$, and $R_4^{(\beta)}$ shown on the $(p_{01},p_{10}) \in (0,1)^2$ plane for parameters $d=4$, $a = 0.9$ and $\beta =0.99$. The figure illustrates how the conditions on $F^{(\beta)}(\cdot)$, $H^{(\beta)}(\cdot)$, and $G^{(\beta)}(\cdot)$ partition the $(p_{01},p_{10})$ space.}
	\label{fig:regions}
\end{figure}

The following theorem is the main result of this section. It shows the existence of an optimal policy that is of threshold-type for the $\beta$-discounted problem~\eqref{discount_opt_prob}. 

\begin{theorem} \label{thm:discount}
    Consider the MDP~\eqref{discount_opt_prob} and let Assumption~\ref{ass:sys_chn} hold. Then, for each $\beta \in (0,1)$, there exists an optimal policy, denoted by $\pi\ub(\cdot)$, that satisfies the following: 
    \begin{enumerate}
        \item[1)] if $(a, d, p_{01}, p_{10}) \in R\ub_1$, then $\pi\ub(\cdot,0,c)$ is a non-increasing threshold-type policy in $\D$ for each $c \in \{0,1\}$;
        \item[2)] if $(a, d, p_{01}, p_{10}) \in R\ub_2$, then $\pi\ub(\cdot,0,0)$ is a non-increasing threshold-type policy and $\pi\ub(\cdot,0,1)$ is a non-decreasing threshold-type policy in $\D$;
        \item[3)] if $(a, d, p_{01}, p_{10}) \in R\ub_3$, then $\pi\ub(\cdot,0,0)$ is a non-decreasing threshold-type policy and $\pi\ub(\cdot,0,1)$ is a non-increasing threshold-type policy in $\D$;
        \item[4)] if $(a, d, p_{01}, p_{10}) \in R\ub_4$, then $\pi\ub(\cdot,0,c)$ is a non-decreasing threshold-type policy in $\D$ for each $c \in \{0,1\}$.
    \end{enumerate}
\end{theorem}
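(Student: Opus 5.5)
Everything reduces to the sign of a single difference of Q-values, which I will show is affine in $\D^2$. Fix $c\in\{0,1\}$ and define $D\ub(\D,c):=Q\ub((\D,0,c);1)-Q\ub((\D,0,c);2)$. Both actions cost $\D^2+\lambda$, so by Proposition~\ref{prop:VI} and Table~\ref{tab:df},
\[
D\ub(\D,c)=\beta\Big(p_{c0}\bE[V\ub(a\D+w,0,0)]+p_{c1}\bE[V\ub(w,0,1)]-\sum_{c_+}p_{cc_+}\bE[V\ub(a\D+w,d-1,c_+)]\Big),
\]
with $w\sim\cN(0,1)$. The term $p_{c1}\bE[V\ub(w,0,1)]$ does not depend on $\D$. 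The structural claim then follows once I show that $D\ub(\D,c)=\kappa_c\,\D^2+b_c$ for constants $\kappa_c,b_c$, with $\kappa_c$ of an explicitly computable sign. If $\kappa_c<0$, the set $\{D\ub\le 0\}$ is $\{|\D|\ge\text{thr}\}$, so Channel 1 is chosen for large errors: this is non-increasing threshold type. If $\kappa_c>0$, the structure is non-decreasing. If $\kappa_c=0$, the rule is constant in $\D$, which is a degenerate threshold. Ties are broken consistently so that the chosen minimizer is optimal by Proposition~\ref{prop:VI}(c).

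To get the affine form, I will compute how $V\ub$ depends on $\D$ along each branch. First, for $r\ge1$ only $u=0$ is feasible. Unrolling $r$ steps gives
\[
V\ub(\D,r,c)=\sum_{i=0}^{r-1}\beta^i a^{2i}\D^2+\text{const}(r,c),
\]
because after delivery the error is $\cN(0,\sigma_d^2)$, independent of $\D$. In particular, $V\ub(\D,d-1,c_+)=\D^2\sum_{i=0}^{d-2}(\beta a^2)^i+\text{const}$. Taking the expectation over $a\D+w$ and including the immediate stage, the Channel 2 branch contributes exactly $\beta a^2\D^2\,(\g_2-1)/(\beta a^2)\cdot\beta a^2$ worth of quadratic weight. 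Equivalently, the total $\D^2$-coefficient of $Q(\cdot;2)$ is $\g_2$.

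For Channel 1 I need $V\ub(\D,0,0)$, and this is the main obstacle: $V\ub(\cdot,0,0)$ is itself a minimum of two branches, so it is not obviously quadratic. My plan is a sandwich argument on the value-iteration iterates $V_n\ub$, combined with induction.

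1. I will show that $V_n\ub(\D,0,c)-k\,\D^2$ is bounded and even in $\D$. Here $k$ is the slope of whichever branch dominates for large $|\D|$.
2. Consider the policy that always uses Channel 1 from state $(\cdot,0,0)$. Iterating this policy gives quadratic weight $\sum_i\g_1^i$, because each repeated failure multiplies the weight by $\beta a^2(1-p_{01})$. Consider instead the policy that uses Channel 2. It gives weight $\g_2$.
3. Comparing these two weights gives $F\ub$. The analogous comparison from state $c=1$, where one failure multiplies the weight by $\beta a^2 p_{10}$ before entering the $c=0$ regime, yields $H\ub$ when $F\ub\le0$ (the $c=0$ regime uses Channel 1) and $G\ub$ when $F\ub>0$ (the $c=0$ regime uses Channel 2).

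The difficulty is that $V\ub(\cdot,0,0)$ is a minimum of two quadratics plus bounded terms. So $D\ub$ is exactly affine in $\D^2$ only in the leading order, not globally.

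To handle this, I will argue monotonicity rather than exact affinity. The goal is to show that $\D\mapsto D\ub(\D,c)$ is monotone in $|\D|$, with its direction set by the signs of $F\ub$ and $H\ub$ or $G\ub$. I will do this by induction on VI. The inductive hypothesis is that $V_n\ub(\cdot,0,c)$ is even and that $V_n\ub(\D,0,c)-V_n\ub(\D,d-1,c')$-type differences are monotone in $|\D|$. Preserving this hypothesis needs one more ingredient: for $w$ Gaussian, $\D\mapsto\bE[f(a\D+w)]$ preserves evenness and monotonicity in $|\D|$ for even $f$ that is nondecreasing in $|\cdot|$, a standard symmetric-unimodal convolution fact. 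With that, the induction goes through. Passing to the limit $n\to\infty$ via Proposition~\ref{prop:VI}(a) preserves monotonicity. Finally, reading off the sign of the asymptotic slope in each region $R_1\ub,\dots,R_4\ub$ gives the four cases of Theorem~\ref{thm:discount}.
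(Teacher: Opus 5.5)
There is a genuine gap. Your reduction and the first half of your plan match the paper. You have $Q\ub(\D,0,c;2)=\g_2\D^2+\mathrm{const}$ (the paper's \eqref{eq:Q2_diff}), hence $D\ub(\D,c)=\beta p_{c0}\,\bE[V\ub(a\D+w,0,0)]-(\g_2-1)\D^2+\mathrm{const}_c$. Your sandwich $V\ub(\D,0,0)=k\D^2+O(1)$ with $k=\min\{\g_2,\sum_{i\ge0}\g_1^i\}$ is \eqref{eq:thm1_1}, Lemma~\ref{lemma:V_r2_lb} and $V\ub\le Q\ub(\cdot;2)$.

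The gap is your last step. The signs of $F\ub$, $H\ub$, $G\ub$ are the signs of the \emph{asymptotic} slope of $D\ub(\cdot,c)$ in $\D^2$. They therefore only tell you which channel is used for large $|\D|$. A threshold policy needs $D\ub(\cdot,c)$ to change sign at most once on $\bR_+$, and ``reading off the sign of the asymptotic slope'' does not give that. Your induction is where this would have to be proved, but as stated it does not pin anything down. What governs $D\ub(\cdot,c)$ is the Gaussian smoothing of $p_{c0}V\ub(x,0,0)-\sum_{i=0}^{d-2}(\beta a^2)^ix^2$, where the weight $p_{c0}$ matters. It is not a difference $V_n\ub(\D,0,c)-V_n\ub(\D,d-1,c')$. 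Moreover, whether a monotonicity property survives the pointwise minimum defining $V_{n+1}\ub(\cdot,0,0)$ depends on exactly which property you carry.

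The version that does survive both the minimum and the smoothing is a two-sided slope bound: $Ux^2-V_n\ub(x,0,0)$ and $V_n\ub(x,0,0)-Lx^2$ nondecreasing in $|x|$. This uses a maximum, resp.\ minimum, of nondecreasing functions, plus the symmetric-unimodal fact you cite. But the constants it supports are $U=\max\{\g_2,\sum_{i\ge0}\g_1^i\}$ and $L=\min\{\g_2,\sum_{i\ge0}\g_1^i\}=k$, not $U=k$.

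If you carry this through, you do get $D\ub(\cdot,0)$ monotone in the claimed direction in all four regions, and $D\ub(\cdot,1)$ nondecreasing in $R_2\ub$ and $R_4\ub$. However, $D\ub(\cdot,1)$ comes out nonincreasing only in two cases: when $1+\beta a^2p_{10}\g_2\le\g_2$ (i.e.\ $G\ub\le0$) if $F\ub\le0$, and when $H\ub\le0$ if $F\ub>0$. These are in general strictly stronger than the conditions $H\ub\le0$ and $G\ub\le0$ defining $R_1\ub$ and $R_3\ub$.

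On $R_1\ub\cap\{G\ub>0\}$ this is not mere slack in the bound. For large $\lambda$ the $c=0$ policy uses Channel~2 on a long interval $|\D|\le\D^\ast(0)$. There $V\ub(\cdot,0,0)=Q\ub(\cdot,0,0;2)$ grows like $\g_2\D^2$, so near the origin $D\ub(\cdot,1)$ has slope about $G\ub>0$ in $\D^2$ and is not nonincreasing. So part 1) for $c=1$ on that set, and likewise part 3) for $c=1$ on $R_3\ub\cap\{H\ub>0\}$, need an idea your proposal does not contain, e.g.\ a direct single-crossing argument rather than monotonicity.

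For comparison, the paper derives its increment inequalities by inserting these same value bounds into $\int(\eta(\D_+;a\D_2,1)-\eta(\D_+;a\D_1,1))V\ub(\D_+,0,0)\,d\D_+$. That tacitly lets the $O(1)$ remainders cancel. You were right that leading-order information is not enough, but your sketch does not close that gap either.
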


\begin{proof}
    We first note that proving the existence of a non-decreasing threshold-type policy in $\D$ for $c \in \{0,1\}$ is equivalent to showing that for any $\D_1, \D_2 \in \bR$ such that $|\D_1| \leq |\D_2|$, we have $Q\ub(\D_2,0,c;2) - Q\ub(\D_1,0,c;2) \leq Q\ub(\D_2,0,c;1) - Q\ub(\D_1,0,c;1)$. Similarly, the existence of non-increasing threshold-type policy in $\D$  is equivalent to showing that $Q\ub(\D_2,0,c;2) - Q\ub(\D_1,0,c;2) \geq Q\ub(\D_2,0,c;1) - Q\ub(\D_1,0,c;1)$. We will prove the result for $\D \in \bR_+$. The proof will then follow from Lemma~\ref{lemma:even} for $\D \in \bR$ since $\pi\ub(\cdot, 0, c)$ are even for each $c \in \{0,1\}$. We begin by observing that for each $\D_2, \D_1 \in \bR$, 
    \al{
    Q\ub(\D_2,0,c;2) - Q\ub(\D_1,0,c;2) = \g_2(\D_2^2 - \D_1^2), \label{eq:Q2_diff}
    } 
    where the equality follows from~\eqref{eq:lm2_2} in Lemma~\ref{lemma:V_r2_lb} and Proposition~\ref{prop:VI} since $\lim_{n \rightarrow \infty} Q\ub_n(s) = Q\ub (s)$ for each $s \in \cS$. We now start proving the result. 

    1) We will first show that there exists a constant $M \in \bR_+$ such that for each $\D \in \bR_+$, 
    \al{
    V\ub(\D,0,0) \leq \sum_{i=0}^{\infty} \g_1^i \D^2 + M. \label{eq:thm1_1}
    } 
    To show~\eqref{eq:thm1_1} we will use induction on VI functions $\{V_n\ub\}_{n \in \bN}$. The inequality will then follow from Proposition~\ref{prop:VI} since $\lim_{n \rightarrow \infty} V_n\ub(s) = V\ub(s)$ for each $s \in \cS$. Now, it follows from~\eqref{eq:V_0} and~\eqref{eq:V_n} that $V\ub_1(\D,0,0) = \D^2 \leq \sum_{i=0}^{\infty} \g_1^i \D^2$. This is the base case with $M_1 = 0$. Now assume that $V\ub_k(\D,0,0) \leq \sum_{i=0}^{\infty} \g_1^i \D^2 + M_k$ for $k = 1,2, \ldots, n$ and for some constant $M_n$ independent of $\D^2$. We will show that this inequality also holds for $V\ub_{n+1}(\D,0,0)$. It follows from~\eqref{eq:V_n} and~\eqref{eq:Q_n} that
    \nal{
    	& V\ub_{n+1}(\D,0,0) \leq Q\ub_{n+1}(\D,0,0;1) \notag \\
    	& \leq \D^2 + \lambda+ \beta (1-p_{01}) \notag \\
    	& \times \int_{\bR} \eta(\D_+;a\D,1) \lf(\sum_{i=0}^{\infty} \g_1^i \D_+^2 + M_n\rt) \,d\D_+ \notag \\
    	& + \beta p_{01} \int_{\bR} \eta(\D_+;0,1) \lf(\sum_{i=0}^{\infty} \g_1^i \D_+^2 + M_n\rt) \,d\D_+ \notag \\
    	& = \lf(1 + \g_1\sum_{i=0}^{\infty} \g_1^i\rt) \D^2 + \beta \lf(\sum_{i=0}^{\infty} \g_1^i + M_n\rt) + \lambda \notag \\
    	& = \lf(\sum_{i=0}^{\infty} \g_1^i\rt) \D^2 + M_{n+1}^{(1)}, 
    }
    where $M_{n+1}^{(1)} = \beta \lf(\sum_{i=0}^{\infty} \g_1^i + M_n\rt) + \lambda$. The inequality follows from the hypothesis on $V\ub_n$. This completes the induction by taking $M = \lim_{n \rightarrow \infty} M_{n}^{(1)}$.
    
    Next, we divide the proof into two cases as follows:

    Case i) $c = 0$: Combining~\eqref{eq:thm1_1} with~\eqref{eq:Q} we have
    \nal{
    & Q\ub(\D_2,0,0;1) - Q\ub(\D_1,0,0;1) \notag \\
    & = (\D_2^2 - \D_1^2) + \beta (1-p_{01}) \int_{\bR} (\eta(\D_+;a\D_2,1)  \notag \\
    & \hspace{2cm}- \eta(\D_+;a\D_1,1))  V\ub(\D_+,0,0) \,d\D_+ \notag \\
    & \leq \sum_{i=0}^{\infty} \g_1^i (\D_2^2 - \D_1^2) \leq \g_2 (\D_2^2 - \D_1^2),
    }
    where the last inequality follows since $(a,d,p_{01}, p_{10}) \in R\ub_1$. Now, the above result combined with~\eqref{eq:Q2_diff} completes the proof for $c =0$.

    Case ii) $c=1$: We have from~\eqref{eq:Q} and~\eqref{eq:thm1_1} that
    \nal{
    & Q\ub(\D_2,0,1;1) - Q\ub(\D_1,0,1;1) \notag \\
    & = (\D_2^2 - \D_1^2) + \beta p_{10} \int_{\bR} (\eta(\D_+;a\D_2,1) \notag \\
    & \hspace{2cm} - \eta(\D_+;a\D_1,1)) V\ub(\D_+,0,0) \,d\D_+ \notag \\
    & \leq \lf(1 + \beta p_{10} a^2 \sum_{i=0}^{\infty} \g_1^i\rt) (\D_2^2 - \D_1^2) \leq \g_2 (\D_2^2 - \D_1^2),
    }
    where the last inequality follows since $(a,d,p_{01}, p_{10}) \in R\ub_1$. This completes the proof.

    2) The proof for $c=0$ follows on similar lines as Case i) of part 1) and hence is omitted. We now focus on the case when $c=1$. It follows from~\eqref{eq:Q} that 
    \nal{
    & Q\ub(\D_2,0,1;1) - Q\ub(\D_1,0,1;1) = (\D_2^2 - \D_1^2) \\
    & + \beta p_{10} \int_{\bR} (\eta(\D_+;a\D_2,1)  \notag \\
    & \hspace{2cm}- \eta(\D_+;a\D_1,1))  V\ub(\D_+,0,0) \,d\D_+ \notag \\
    & \geq \lf(1 + \beta p_{10} a^2 \sum_{i=0}^{\infty} \g_1^i\rt) (\D_2^2 - \D_1^2) \\
    & \geq \g_2(\D_2^2 - \D_1^2),
    }
    where the first inequality follows from Lemma~\ref{lemma:V_r2_lb} and the second inequality follows since $(a,d,p_{01}, p_{10}) \in R\ub_1$. This completes the proof for part 2).
    
    3) Case i) $c = 0$: 
    It follows from Lemmas~\ref{lemma:aux_res} and~\ref{lemma:V_r2_lb} that $Q\ub(\D_2,0,0;1) - Q\ub(\D_1,0,0;1) \geq (1 + \g_1\g_2)(\D_2^2 - \D_1^2) \geq \g_2 (\D_2^2 - \D_1^2)$.

    Case ii) $c=1$: It follows from~\eqref{eq:V} and~\eqref{eq:Q} that
    \al{
    & V\ub(\D,0,0) \le Q\ub(\D,0,0;2) \notag \\
    & = \D^2 + \lambda + \beta \sum_{c_1 \in \{0,1\}} p_{0c_1} \notag \\
    & \times \int_{\bR} \eta(\D_1';a\D,1) V\ub(\D_1', d-1, c_+) \,d\D_1'. \label{eq:part3_1}
    }
    We first focus on the term $V\ub(\D_1', d-1, c_+)$. 
    Because the optimal decision is $u = 0$ for $r \geq 1$, we can iteratively expand $V\ub(\D_1', d-1, c_1)$ as follows: 
    \nal{
    & V\ub(\D_1', d-1, c_1) \notag \\
    =~& Q\ub(\D_1', d-1, c_1;0) =  (\D_1')^2\notag \\
    +~& \beta \sum_{c_2} p_{c_1c_2} \int_{\bR} \eta(\D_2'; a\D_1', 1) V\ub(\D_2', d-2, c_2) \,d\D_2' \notag \\
    \vdots \notag \\
    =~& (\D_1')^2 + \beta (1 + (a\D_1')^2) + \ldots \notag \\
    +~& \beta^{d-2} (1 + a^2 + \ldots + (a^2)^{d-3} + (a^2)^{d-2} (\D_1')^2) \notag \\
    +~& \beta^{d-1} \sum_{c_d} p_{c_1c_d}^{(d-1)} \int_{\bR} \eta(\D_d'; 0, \sigma^2_d) V\ub(\D_d', 0, c_d) \,d\D_d', 
    }
    where $p_{cc_+}^{(n)}$ denotes the $n$-step transition probability from state $c$ to $c_+$ of Channel 1. Plugging the above in~\eqref{eq:part3_1}, we get
    \nal{
    & V\ub(\D,0,0) \leq Q\ub(\D,0,0;2) = \sum_{i=0}^{d-1} \lf(\beta a^2\rt)^i \D^2 + \Hat{K}^{(2)},
    }
    where $\Hat{K}^{(2)}$ is the constant term independent of $\D^2$. Now, using the above result we get $Q\ub(\D_2,0,1;1) - Q\ub(\D_1,0,1;1) \le (1 + \beta a^2 p_{10} \g_2)(\D_2^2 - \D_1^2) \le \g_2 (\D_2^2 - \D_1^2)$. This completes the proof of part 3).

    4) The proof for $c = 0$ follows along similar lines as part 3) and is therefore omitted. We now focus on the case when $c = 1$. It follows from Lemma~\ref{lemma:V_r2_lb} that $Q\ub(\D_2,0,1;1) - Q\ub(\D_1,0,1;1) \ge (1 + \beta a^2 p_{10} \g_2) (\D_2^2 - \D_1^2) \ge \g_2(\D_2^2 - \D_1^2)$. This completes the proof.
\end{proof}

\subsection{Intuition Behind the Regions $\{R\ub_i\}_{i=1}^4$} \label{subsec:intuition}

This section gives intuition for the regions $\{R\ub_i\}_{i=1}^4$ and interpret the results of Theorem~\ref{thm:discount}. The regions $\{R\ub_i\}_{i=1}^4$ capture the trade-off in the hybrid channels, i.e., the choice between the fast but unreliable Channel 1 and slow but reliable Channel 2. The decision taken by the sensor not only depends on the channel parameters $(d,p_{01},p_{10})$ but also on the AR parameter $a$, which governs how quickly the estimation error grows while the estimator waits for a delayed Channel 2 packet.

\begin{itemize}
    \item[1)] Region $R\ub_1$: Channel 1 has large $p_{01}$ and is therefore likely to recover from a bad state. This implies that Channel 1 is reliable. An optimal policy exploits its low latency and uses Channel 1 when the estimation error is large. Even if the previous Channel 1 state was bad ($c(t-1)=0$), a large $p_{01}$ makes it preferable to risk Channel 1 rather than wait $d$ time steps for Channel 2.

    \item[2)] Region $R\ub_2$: Channel 1 switches frequently between good and bad states because both $p_{01}$ and $p_{10}$ are large. An optimal policy anticipates this oscillation.  If Channel 1 state was bad in the previous time step ($c(t-1)=0$), it is likely to be in a good state in the next time step. Hence, an optimal policy chooses to transmit via Channel 1 for large estimation errors. However, if Channel 1 state was good in the previous time step ($c(t-1)=1$), it is likely to be in a bad state in the next time step. Hence, in this case, an optimal policy switches to Channel 2 when the magnitude of the estimation error is large.

    \item[3)] Region $R\ub_3$: This region indicates that if Channel 1 state is good, it tends to stay in the good state (low $p_{10}$), and if it is in a bad state, it tends to stay in the bad state (low $p_{01}$). An optimal policy therefore trust the previous state of Channel 1. If Channel 1 state was bad in the previous time step ($c(t-1)=0$), an optimal policy avoids Channel 1 for large errors and offloads to Channel 2. Conversely, if Channel 1 state was good in the previous time step ($c(t-1)=1$), an optimal policy uses Channel 1 for large errors.

    \item[4)] Region $R\ub_4$: Channel 1 is highly unreliable because $p_{01}$ is small and $p_{10}$ is large. An optimal policy uses Channel 1 only when the estimation error is small, and once the error exceeds a threshold, it is optimal to use the reliable Channel 2.

\end{itemize}

\subsection{Analysis of the Average Cost MDP} \label{subsec:struct_avg}

We next show that the structural properties of an optimal policy derived for the discounted problem in Section~\ref{subsec:struct_dis} also hold for the
average cost problem~\eqref{opt_prob}. This is achieved by considering a sequence of $\beta$-discounted optimal policies and showing that their limit, as $\beta \uparrow 1$, yields an average cost optimal policy with the same structure as shown in the following proposition. This method is commonly referred to as the vanishing discount approach~\cite{Hernandez2012discrete}.

\begin{proposition} \label{prop:opt_pol_convg}
    Consider the $\beta$-discounted MDP~\eqref{discount_opt_prob}. Let Assumption~\ref{ass:sys_chn} hold. Let $\{\beta_n\}_{n \in \bZ_+}$, $\beta_n \in (0,1)$ be any sequence of discount factors converging to $1$. Then, for any $s \in \cS$ there exists some subsequence $\{\beta_{n_k}\} \subseteq \{\beta_n\}$ such that $\beta_{n_k} \uparrow 1$, $n_k \in \bZ_+$, with $\{\pi^{\lf(\beta_{n_k}\rt)}(s)\}$ denoting the corresponding sequence of $\beta$-discounted optimal policies. Also, there exists a stationary policy $\pi\ust_{av.}$ such that $\lim_{k \rightarrow \infty} \pi\ust_{av.}(s) = \pi^{\lf(\beta_{n_k}\rt)}(s)$. Then, $\pi\ust_{av.}$ is an optimal policy for the average cost problem~\eqref{opt_prob}. Moreover, if $\zeta\ust$ is the optimal average cost~\eqref{opt_prob} and $V\ub$ is as in~\eqref{eq:V}, then we have $\zeta\ust = \lim_{\beta \rightarrow 1} (1-\beta) V\ub(s)$, $s \in \cS$.
\end{proposition}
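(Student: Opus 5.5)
The plan is to use the vanishing discount approach of Hernández-Lerma and Lasserre \cite{Hernandez2012discrete}, in the form that requires the Weak Average Cost Optimality Inequality (ACOI) assumptions. These are: (W1) the conditions of Lemma~\ref{lemma:VI_assump}; (W2) there is a state $z$ and $\bar\beta<1$ with $\sup_{\beta\in[\bar\beta,1)}(1-\beta)V\ub(z)<\infty$; (W3) the relative value functions $h\ub(s):=V\ub(s)-V\ub(z)$ are bounded below by $-N$ uniformly in $\beta$; and (W4) $h\ub(s)\le b(s)$ for some nonnegative function $b$, uniformly in $\beta$. Once these hold, standard results (Theorems 5.4.3--5.4.4 there, or Schäl's vanishing discount theorem) give a constant $\zeta\ust$ and a function $h$ satisfying the ACOI. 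They also give $\zeta\ust=\lim_{\beta\uparrow1}(1-\beta)V\ub(s)$. Finally, any pointwise limit point of the discounted optimal policies $\pi^{(\beta_{n_k})}$ is average cost optimal.

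\textbf{Existence of the limit policy.} Since $\cU$ is finite, I would argue as follows. For each $s$ the sequence $\pi^{(\beta_n)}(s)\in\cU(s)$ takes finitely many values, so some value recurs along a subsequence. To obtain a single stationary policy, I would use the threshold representation from Theorem~\ref{thm:discount}. Each $\pi^{(\beta_n)}$ is determined by finitely many thresholds $\Delta^{\ast}_{\beta_n}(c)\in[0,\infty]$, together with the direction of the switch, and the region index takes only four values. Passing to a subsequence on which the region index is constant, and on which the thresholds converge in the compactified half-line $[0,\infty]$, gives a limiting policy that is again of threshold type. It agrees with $\lim_k\pi^{(\beta_{n_k})}(s)$ except possibly at the threshold points themselves. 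At those points I would break ties by choosing the action that attains the minimum in the ACOI, which the continuity in P1--P2 allows. This also yields the structural inheritance claimed in Section~\ref{subsec:struct_avg}.

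\textbf{Verifying W2--W4.} For W2, take $z=(0,0,0)$ and use the bound \eqref{eq:upper_bound} from Lemma~\ref{lemma:VI_assump}. This gives $(1-\beta)V\ub(z)\le (1-\beta)J\ub(z;\pi^{(01)})$, which is bounded by $\frac{1}{(1-p_{01})(1-a^2(1-p_{01}))}+\lambda$ uniformly in $\beta$. Here Assumption~\ref{ass:sys_chn} is used, since $\beta a^2(1-p_{01})\le a^2(1-p_{01})<1$. For W4, I would bound $V\ub(s)-V\ub(z)$ by coupling: starting from $s$, run $\pi^{(01)}$ until the first time the process reaches $r=0$ with $c=1$ under a successful Channel 1 delivery. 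After that the error is fresh $\cN(0,1)$ noise, as if started near $z$. The expected cost accumulated before this stopping time is bounded by a quadratic function $b(s)=K_1\Delta^2+K_2$, with constants independent of $\beta$. This follows from the geometric tail of the Channel 1 recovery time, combined with $a^2(1-p_{01})<1$, by the same computation as in \eqref{eq:for_lower_bd}.

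\textbf{Main obstacle.} I expect W3, the uniform lower bound on $h\ub$, to be the hardest step. The natural route is to show that $V\ub(\cdot,r,c)$ attains its minimum near $\Delta=0$. The discounted-cost difference between the four discrete components $(r,c)$ can then be controlled by a coupling time $\tau$ with $\bE[\tau]<\infty$. The point is that from any $(0,r,c)$ the process reaches $(\cdot,0,0)$-type states within $d$ steps plus a geometric Channel 1 time. Then $V\ub(z)-V\ub(s)\le\bE[\text{cost up to }\tau]$, bounded uniformly in $\beta$. Evenness in $\Delta$ (Lemma~\ref{lemma:even}) and monotonicity of $V\ub$ in $|\Delta|$ (from the lower bound in Lemma~\ref{lemma:V_r2_lb}) then extend this to all $s$. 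Once W1--W4 are in place, the optimality of $\pi\ust_{av.}$ and the formula $\zeta\ust=\lim_{\beta\to1}(1-\beta)V\ub(s)$ follow directly from the cited theorem.
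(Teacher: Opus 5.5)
Your overall route is the paper's. You verify the conditions of Lemma~\ref{lemma:VI_assump}, establish the three bounds W2--W4 (the paper's Lemma~\ref{lemma:cond_avg_cost}), and apply the vanishing-discount theorem. The paper invokes Sch\"al's Theorem 3.8, where the limit policy is taken pointwise with an $s$-dependent subsequence, so your threshold-compactness construction is an optional strengthening. If you keep it, pass to a further subsequence at the finitely many threshold points rather than tie-breaking through the ACOI.

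\textbf{The genuine gap is in W4.} After the first successful Channel~1 delivery the state is $(w,0,1)$ with $w\sim\mathcal{N}(0,1)$. This is neither $z=(0,0,0)$ nor close to it, so your stopping argument only gives $V^{(\beta)}(s)\le b(s)+\mathbb{E}_w[V^{(\beta)}(w,0,1)]$. To conclude $h^{(\beta)}(s)\le b(s)$ you still need $\mathbb{E}_w[V^{(\beta)}(w,0,1)]-V^{(\beta)}(z)\le C$ uniformly in $\beta$. Both terms grow like $1/(1-\beta)$, so this is itself a bounded-relative-value statement, and the first-passage computation says nothing about it. Stopping only when $|w|\le\varepsilon$, as the paper's hybrid policy does, does not settle it by proximity alone: Lemma~\ref{lemma:inc_V} gives $V^{(\beta)}(w,0,1)\ge V^{(\beta)}(0,0,1)$, which is the wrong direction.

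What is missing is a $\beta$-uniform growth bound inside each discrete component, $V^{(\beta)}(\Delta,r,c)-V^{(\beta)}(0,r,c)\le K\Delta^2+K'$. One way to get it:
\begin{itemize}
\item Let the process started at $(\Delta,r,c)$ replicate the $\beta$-optimal actions of the process started at $(0,r,c)$, on the same noise and channel path.
\item The two errors differ by $a^t\Delta$ until the first reset (a Channel~1 success or a Channel~2 delivery) and coincide afterwards.
\item ``No reset by time $t$'' forces $c(0)=\cdots=c(t-d)=0$. This event is independent of the noise and has probability at most $(1-p_{01})^{\max\{t-d,0\}}$, so Assumption~\ref{ass:sys_chn} makes the extra cost finite and free of $\beta$.
\end{itemize}
Combine this with a cross-component step to get $V^{(\beta)}(0,r,c)\le V^{(\beta)}(z)+C$. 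From $(0,r,c)$, wait out $r$, then use Channel~1 until the first epoch in $\{r=0,c=0\}$. This takes a geometric time, and the error at that epoch has a $\beta$-free second-moment bound.

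\textbf{W3 is fixable but misstated.} You are right that W3 is not automatic. The paper's Lemma~\ref{lemma:cond_avg_cost}(ii) takes $N=0$ from Lemma~\ref{lemma:inc_V}, which only compares states with the same $(r,c)$. However, your inequality $V^{(\beta)}(z)-V^{(\beta)}(s)\le\mathbb{E}[\text{cost up to }\tau]$ points the wrong way: running a policy forward from $s$ upper-bounds $V^{(\beta)}(s)$ and cannot lower-bound it. The working version runs the $\beta$-optimal policy from $s$ up to the hitting time $\tau$ of $\{r=0,c=0\}$. Then $V^{(\beta)}(s)\ge\mathbb{E}[\beta^{\tau}V^{(\beta)}(\Delta(\tau),0,0)]\ge\mathbb{E}[\beta^{\tau}]\,V^{(\beta)}(z)$ by Lemma~\ref{lemma:inc_V}; monotonicity comes from that lemma, not from Lemma~\ref{lemma:V_r2_lb}. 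Hence $V^{(\beta)}(z)-V^{(\beta)}(s)\le(1-\mathbb{E}[\beta^{\tau}])V^{(\beta)}(z)\le\mathbb{E}[\tau]\,(1-\beta)V^{(\beta)}(z)$. This is bounded by W2 because $\sup_{s,\pi}\mathbb{E}[\tau]<\infty$. The channel evolves independently of the actions, $r$ returns to $0$ at least every $d$ steps, and at each return $c=0$ has conditional probability at least $\min\{1-p_{01},p_{10},p^{(d)}_{00},p^{(d)}_{10}\}>0$.
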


\begin{proof}
    The proof follows from~\cite[Theorem 3.8]{schal1993average}. To apply this theorem, it suffices to verify that the General Assumption~\cite[p. 164]{schal1993average}, Assumptions (S) and (B) in~\cite{schal1993average} are satisfied in our setup. In our case, Assumption (S) holds trivially since the action set $\cU(s)$ is finite for each $s \in \cS$, see~\cite[p. 165]{schal1993average}. We show in Lemma~\ref{lemma:GA} that the General Assumption is satisfied. Next, we note that Assumption (B) is the same as Condition 5.4.5 (b) of~\cite{Hernandez2012discrete}. It then follows from~\cite[Theorem 5.4.6]{Hernandez2012discrete} that Assumption (B) holds once we verify that Assumptions 4.2.1 and 5.4.1 of~\cite{Hernandez2012discrete} are satisfied. These conditions are verified in Lemmas~\ref{lemma:VI_assump} and~\ref{lemma:cond_avg_cost}, respectively.
\end{proof}

Before stating the main result, we define four regions $R_i$, $i\in\{1,2,3,4\}$, for the average cost problem. Each $R_i$ is the limit of corresponding discounted cost problem region $R\ub_{i}$ as $\beta \rightarrow 1$, where $R\ub_i$, $i \in \{1,2,3,4\}$ are defined in Definition~\ref{def:4reg_dis}.

\begin{definition} \label{def:4reg_avg_cost}
    Consider the average cost MDP (cf.~\eqref{opt_prob}), and suppose that Assumption~\ref{ass:sys_chn} holds. Consider the parameter space $(a, d, p_{01}, p_{10}) \in \bR \times \{2,3,\ldots\} \times (0,1)^2$, where $a \in \bR$ is the AR process parameter~(cf.~\eqref{source}), $d \in \{2,3,\ldots\}$ is Channel 2 delay, and $(p_{01}, p_{10}) \in (0,1)^2$ are the parameters of Channel 1. We partition the admissible parameter space, defined as, $\cD := \{(a, d, p_{01}, p_{10}): a^2 (1-p_{01}) < 1\}$ into the following four regions, 
    \nal{
    R_1 &= \{(a,d,p_{01},p_{10}) : \\
    &\qquad F(a,d,p_{01},p_{10}) \le 0,\;
        H(a,d,p_{01},p_{10}) \le 0 \}, \\
    R_2 & = \{(a, d,p_{01}, p_{10}): \\
    &\qquad F(a, d,p_{01}, p_{10}) \le 0, H(a, d,p_{01}, p_{10}) > 0 \}, \\
    R_3 & = \{(a, d,p_{01}, p_{10}): \\
    & \qquad F(a, d,p_{01}, p_{10}) > 0, G(a, d, p_{01}, p_{10}) \leq 0 \}, \\
    R_4 & = \{(a, d,p_{01}, p_{10}): \\
    & \qquad F(a, d,p_{01}, p_{10}) > 0, G(a, d,p_{01}, p_{10}) > 0 \},
    }
    where the functions $F(\cdot)$, $H(\cdot)$, $G(\cdot) : \bR \times \{2,3,\ldots\} \times (0,1)^2 \mapsto \bR$ are defined as
    \nal{
    & F(a, d,p_{01}, p_{10}) = \sum_{i=0}^{\infty} \lf(a^2 (1 - p_{01})\rt)^i - \sum_{i=0}^{d-1} \lf(a^2\rt)^i \\
    & H(a, d,p_{01}, p_{10}) = 1 + a^2 p_{10} \sum_{i=0}^{\infty} \lf(a^2 (1 - p_{01})\rt)^i \\
    & \hspace{2.5cm} - \sum_{i=0}^{d-1} \lf(a^2\rt)^i \\
    & G(a, d,p_{01}, p_{10}) = 1 + a^2 p_{10} \sum_{i=0}^{d-1} \lf(a^2\rt)^i - \sum_{i=0}^{d-1} \lf(a^2\rt)^i.
    }
\end{definition}

The following is the main result of this section.

\begin{theorem} \label{thm:avg}
    Consider the average cost problem~\eqref{opt_prob}, and let Assumption~\ref{ass:sys_chn} hold. Then,  there exists an optimal policy for~\eqref{opt_prob}, denoted by $\pi\ust(\D,0,c)$, that satisfies the following: 

    \begin{enumerate}
        \item[1)] if $(a, d, p_{01}, p_{10}) \in R_1$, then $\pi\ust(\cdot,0,c)$ is a non-increasing threshold-type policy in $\D$ for each $c \in \{0,1\}$;

        \item[2)] if $(a, d, p_{01}, p_{10}) \in R_2$, then $\pi\ust(\cdot,0,0)$ is a non-increasing threshold-type policy and $\pi\ust(\cdot,0,1)$ is a non-decreasing threshold-type policy in $\D$;
        
        \item[3)] if $(a, d, p_{01}, p_{10}) \in R_3$, then $\pi\ust(\cdot,0,0)$ is a non-decreasing threshold-type policy and $\pi\ust(\cdot,0,1)$ is a non-increasing threshold-type policy in $\D$;

        \item[4)] if $(a, d, p_{01}, p_{10}) \in R_4$, then $\pi\ust(\cdot,0,c)$ is a non-decreasing threshold-type policy in $\D$ for each $c \in \{0,1\}$.
    \end{enumerate}
\end{theorem}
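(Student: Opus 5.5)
We will prove Theorem~\ref{thm:avg} by the vanishing discount approach, combining Theorem~\ref{thm:discount} with Proposition~\ref{prop:opt_pol_convg}. Fix $(a,d,p_{01},p_{10})$ lying in the interior of one of the regions $R_i$. As $\beta\uparrow 1$ we have $\g_1\to a^2(1-p_{01})<1$, so $\sum_i \g_1^i\to \sum_i (a^2(1-p_{01}))^i$, and $\g_2\to\sum_{i=0}^{d-1}(a^2)^i$. Hence $F\ub\to F$, $H\ub\to H$ and $G\ub\to G$ pointwise. Therefore, if $F<0$ (respectively $>0$) and $H<0$ (respectively $>0$), and so on, then $(a,d,p_{01},p_{10})\in R_i\ub$ for all $\beta$ sufficiently close to $1$. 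Given any sequence $\beta_n\uparrow 1$, we discard finitely many terms so that every $\beta_n$ lies in the same region $R_i\ub$. For these $\beta_n$, Theorem~\ref{thm:discount} gives optimal policies $\pi^{(\beta_n)}$ with the prescribed monotone threshold structure for each $c\in\{0,1\}$, with thresholds $\D^{\ast}_{\beta_n}(c)\in[0,\infty]$.

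Next we invoke Proposition~\ref{prop:opt_pol_convg}. There is a subsequence $\beta_{n_k}$ along which $\pi^{(\beta_{n_k})}(s)\to\pi\ust_{av.}(s)$ for every $s$, and $\pi\ust_{av.}$ is average cost optimal. Since the action set is finite, pointwise convergence means that for each $s$ the actions eventually coincide. To transfer the structure, we pass to a further subsequence along which the thresholds converge in the compactified half-line, $\D^{\ast}_{\beta_{n_k}}(c)\to\bar\D(c)\in[0,\infty]$. Take the non-increasing case: for $|\D|<\bar\D(c)$ the discounted policies eventually choose $2$, and for $|\D|>\bar\D(c)$ they eventually choose $1$. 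So $\pi\ust_{av.}(\cdot,0,c)$ agrees with the threshold policy with threshold $\bar\D(c)$ except possibly at $|\D|=\bar\D(c)$. The action at these two points can be fixed consistently with the definition, namely action $2$ at $|\D|\le\bar\D(c)$. This does not affect optimality, because the set has Lebesgue measure zero and the transition densities are absolutely continuous in $\D_+$, except for the initial state itself. For the initial state we can instead argue that both actions are optimal there. Indeed, the discounted Q-differences $Q\ub(\D,0,c;1)-Q\ub(\D,0,c;2)$, normalized appropriately, change sign at the threshold, so the limiting average-cost relative Q-values are equal there by continuity. The case $r\ge 1$ is trivial since then $\pi=0$.

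The main obstacle is the boundary of the regions, i.e.\ parameters with $F=0$, $H=0$, or $G=0$. There $R_i\ub$ membership for $\beta<1$ may fail or alternate. We would first check the direction of convergence. For instance, $\sum_i\g_1^i$ and $\g_2$ are both increasing in $\beta$, so the sign of $F\ub$ near $\beta=1$ is not determined by $F$ alone. If the discounted membership cannot be guaranteed, we will repeat the inequality chains of the proof of Theorem~\ref{thm:discount} directly for the relative value function $h=\lim_k\bigl(V^{(\beta_{n_k})}-V^{(\beta_{n_k})}(s_0)\bigr)$. The quadratic upper bounds such as~\eqref{eq:thm1_1} and the lower bounds from Lemma~\ref{lemma:V_r2_lb} pass to the limit with $\beta=1$ coefficients, and the non-strict inequalities $F\le0$, $H\le0$, $G\le0$ then suffice in the comparison of Q-differences against $\sum_{i=0}^{d-1}(a^2)^i(\D_2^2-\D_1^2)$. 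This yields the threshold structure in all cases, including the boundary cases.
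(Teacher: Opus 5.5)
Your route is the paper's: Theorem~\ref{thm:discount} for $\beta$ near $1$, then Proposition~\ref{prop:opt_pol_convg}, then the fact that limits of threshold rules are threshold rules. You also correctly notice something the paper's two-line proof uses silently: a point of $R_i$ must lie in $R_i^{(\beta_n)}$ for all large $n$. Your continuity argument supplies this away from the zero sets of $F,H,G$.

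The gap is at those zero sets, which belong to $R_1$, $R_2$, $R_3$ because of the non-strict inequalities. There you offer only a conditional fallback, and it would not go through as sketched, for two reasons.
\begin{itemize}
\item[(i)] It needs a pointwise limit $h$ of $V^{(\beta)}-V^{(\beta)}(s_0)$ that satisfies an average-cost optimality equation whose minimizer is optimal. Proposition~\ref{prop:opt_pol_convg} does not give this, and the paper does not establish it.
\item[(ii)] It needs $\beta$-uniform quadratic bounds on $h^{(\beta)}$. But the additive constants in \eqref{eq:thm1_1} and Lemma~\ref{lemma:V_r2_lb} are of order $1/(1-\beta)$. For example, $(1-\beta)M\to\sum_i(a^2(1-p_{01}))^i+\lambda$, which in general differs from $\zeta^{\star}=\lim_{\beta\uparrow1}(1-\beta)V^{(\beta)}(s_0)$. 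So these bounds do not pass to the limit for $h$.
\end{itemize}

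The fallback is also unnecessary, because the boundary always lands on the correct side. Let $x:=a^2>0$, $T:=1/(1-x(1-p_{01}))$, $S:=\sum_{i=0}^{d-1}x^i$, and $D:=\sum_{i=1}^{d-1}ix^i$; note that $D$ is $d\gamma_2/d\beta$ at $\beta=1$. Counting pairs in $S(S-1)=\sum_{i=0}^{d-1}\sum_{j=1}^{d-1}x^{i+j}$ gives $S(S-1)>D$: the terms with $i+j\le d-1$ already sum to $D$, and the term $i=j=d-1$ is extra. The zero conditions are $T=S$ for $F$, $xp_{10}T=S-1$ for $H$, and $S(1-xp_{10})=1$ for $G$. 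Differentiating at $\beta=1$ and using them gives
\begin{align*}
F=0:&\quad \frac{dF^{(\beta)}}{d\beta}\Big|_{\beta=1}=x(1-p_{01})T^2-D=S(S-1)-D>0,\\
H=0:&\quad \frac{dH^{(\beta)}}{d\beta}\Big|_{\beta=1}=xp_{10}T^2-D=(S-1)T-D>0,\\
G=0:&\quad \frac{dG^{(\beta)}}{d\beta}\Big|_{\beta=1}=xp_{10}S-(1-xp_{10})D=\frac{S(S-1)-D}{S}>0.
\end{align*}
For $H$, the condition $p_{10}<1$ forces $T>(S-1)/x=\sum_{i=0}^{d-2}x^i$. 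Hence $(S-1)T>x\bigl(\sum_{i=0}^{d-2}x^i\bigr)^2\ge D$ by the same counting.

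So at each zero, the discounted function is strictly negative on some interval $(1-\delta,1)$. That is exactly the side ($F\le0$, $H\le0$, $G\le0$) on which the definitions of $R_1$, $R_2$, $R_3$ place the boundary. For $a=0$, all three functions vanish identically in $\beta$, and the point lies in $R_1\cap R_1^{(\beta)}$. Therefore every point of $R_i$ lies in $R_i^{(\beta)}$ for $\beta$ close to $1$, and your interior argument covers all of $R_i$.

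Two minor points remain.
\begin{itemize}
\item[(a)] Proposition~\ref{prop:opt_pol_convg} may produce an $s$-dependent subsequence. So first extract a subsequence on which both thresholds converge in $[0,\infty]$, and then apply the proposition to that subsequence.
\item[(b)] Your tie-breaking argument at $|\Delta|=\bar\Delta(c)$ again presupposes an optimality equation with continuous $h$. It only affects the $\le$ versus $<$ convention, but as written it is asserted rather than proved.
\end{itemize}
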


\begin{proof}
    We first consider a sequence of discount factors $\{\beta_n\}_{n \in \bZ_+}$, $\beta \in (0,1)$ such that $\beta_n \rightarrow 1$. We then have from Theorem~\ref{thm:discount} that for any $(a,d,p_{01},p_{10}) \in R\ub_i$, $i \in \{1,2,3,4\}$, there exists a corresponding non-increasing or non-decreasing threshold-type policy in $\D$ for each $c \in \{0,1\}$. The proof then follows from Proposition~\ref{prop:opt_pol_convg}, since the limit of a sequence of non-increasing (respectively, non-decreasing) threshold-type policies in $\D$ as $\beta \rightarrow 1$ remains a non-increasing (respectively, non-decreasing) threshold-type policy in $\D$. 
\end{proof}

\section{numerical results} \label{sec:num_res}

In this section, we first consider the case when the system parameters $(a,d,p_{01}, p_{10})$ are known. For this setting, we employ relative value iteration (RVI)~\cite{Hernandez2012discrete} to compute an optimal policy for the average cost problem~\eqref{opt_prob}, as discussed in Section~\ref{subsec:RVI}. In practice, however, the system parameters may not be known a priori. In such cases, we use the actor-critic (AC)~\cite{konda2003onactor} algorithm to learn an optimal policy, as described in detail in Section~\ref{subsec:AC}. In general, searching for an optimal policy over all possible policies can be very difficult because the policy space is large. However, we have shown in Theorem~\ref{thm:avg} that there exists an optimal policy that is either non-increasing or non-decreasing threshold-type. This implies that we only need to search within a much smaller class of policies. We use this structural result to design the AC algorithm which makes the learning process faster and more efficient. Throughout this section, we truncate the estimation error $\D$ to the interval $[-15,15]$ and define the truncated state-space as $\Tilde{\cS} = [-15,15] \times \{0,1,\ldots,d-1\} \times \{0,1\}$. 

\subsection{Known Parameter Values} \label{subsec:RVI}

We use the RVI algorithm to obtain an optimal policy. It follows from~\cite[Theorem 5.5.4]{Hernandez2012discrete} that under Assumptions 4.2.1 and 5.5.1, there exist a constant $\zeta\ust$ and a continuous function $h: \cS \rightarrow \bR$ that satisfy the following average cost optimality equation (ACOE) for all $s \in \cS$:
\al{
\zeta\ust + h(s) = \min_{u \in \cU} \lf[\ell(s,u) + \bE_{s_+ \sim p(\cdot |s;u)}[h(s_+)]\rt], \label{eq:ACOE}
}
where $\ell$ is as defined in~\eqref{instan_cost}, $\zeta\ust$ is the optimal average cost of~\eqref{opt_prob}, and $h$ is the relative value function~\cite[Chapter 5]{Hernandez2012discrete}. Moreover, there exists a deterministic stationary optimal policy $\Tilde{\pi}$ that attains the minimum on the right-hand side of~\eqref{eq:ACOE}. We will now use RVI algorithm~\cite[p. 101]{Hernandez2012discrete} to solve~\eqref{eq:ACOE}. For the RVI implementation, we discretize the truncated state space of the estimation error, $[-15, 15]$, with a quantization width of $0.1$. The relative value iteration (RVI) algorithm is shown in Algorithm~\ref{alg:rvi}. The optimal $n$-stage cost~\cite[p. 101]{Hernandez2012discrete} is denoted by $j_n$ and is recursively updated as shown in~\eqref{eq:j_n_1}--\eqref{eq:j_n_2}. $\Tilde{\pi}_n$ in~\eqref{eq:pi_n_1} denotes the minimizer of the right hand side of~\eqref{eq:j_n_1}. Then, it follows from~\cite[Theorem 5.6.3]{Hernandez2012discrete} that under Assumptions 4.2.1, 5.5.1, and 5.6.1 of~\cite{Hernandez2012discrete}, we have that $h_n(s) \rightarrow h(s)$ (see \eqref{eq:h_n}) and $\zeta_n(s) \rightarrow \zeta\ust$ (see \eqref{eq:zeta_n}) for all $s \in \cS$. Figs.~\ref{fig:r1optimalpoliciesscatterv2},~\ref{fig:r2optimalpoliciesscatterv2},~\ref{fig:r3optimalpoliciesscatterv2}, and~\ref{fig:r4optimalpoliciesscatterv2} show the optimal policies for the average cost problem~\eqref{opt_prob} obtained using RVI for regions $R_1, R_2, R_3$, and $R_4$, respectively. It can be seen that, (i) in Fig.~\ref{fig:r1optimalpoliciesscatterv2} for $R_1$, $\pi\ust(\D, 0, c)$ are non-increasing threshold-type policies in $\D$ for each $c \in \{0,1\}$, (ii) in Fig.~\ref{fig:r2optimalpoliciesscatterv2} for $R_2$, $\pi\ust(\D, 0, 0)$ is non-increasing threshold-type policy and $\pi\ust(\D,0,1)$ is non-decreasing threshold-type policy in $\D$, (iii) in Fig.~\ref{fig:r3optimalpoliciesscatterv2} for $R_3$, $\pi\ust(\D, 0, 0)$ is non-decreasing threshold-type policy and $\pi\ust(\D, 0, 1)$ is non-increasing threshold-type policy in $\D$, and (iv) in Fig.~\ref{fig:r4optimalpoliciesscatterv2} for $R_4$, $\pi\ust(\D, 0, c)$ are non-decreasing threshold-type policies in $\D$ for each $c \in \{0,1\}$. These observations experimentally validate our result in Theorem~\ref{thm:avg}. We next compare the performance of an optimal policy for the average cost problem~\eqref{opt_prob} obtained via RVI with the following three suboptimal policies:

\begin{enumerate}
	\item Suboptimal policy 1: The sensor always transmits a data packet via Channel 1.
	
	\item Suboptimal policy 2: The sensor always attempts a data packet transmission using Channel 2 whenever it is available.
	
	\item Suboptimal policy 3: When Channel 2 is available, the sensor chooses Channel 1 for packet transmission with probability $p^{(pol.)}$ and Channel 2 with probability $1-p^{(pol.)}$.
\end{enumerate}

\begin{algorithm}[htb]
\caption{Relative Value Iteration (RVI)}
\label{alg:rvi}
\begin{algorithmic}[0]

\State Initialize $j_0(s)=0$, $h_0(s)=0$ for all $s \in \Tilde{\mathcal{S}}$
\State Choose reference state $(s_0)$ and tolerance $\epsilon$

\For{$n = 0,1,2,\dots$}
\ForAll{$s \in \Tilde{\cS}$}

\State Compute

\If{$r \ge 1$} \State \begin{equation} \begin{aligned} j_{n+1}(s) &= \ell(s,0) \\ &\quad + \int_{\tilde{\mathcal S}} p(s_+|s;0)\, j_n(s_+)\, ds_+ \end{aligned} \label{eq:j_n_1} \end{equation} 

\ElsIf{$r = 0$} \State \begin{equation} \begin{aligned} j_{n+1}(s) &= \min_{u\in\{1,2\}} \Big\{ \ell(s,u) \\ &\quad + \int_{\tilde{\mathcal S}} p(s_+|s;u)\, j_n(s_+)\, ds_+ \Big\} \end{aligned} \label{eq:j_n_2} \end{equation} 

\EndIf

\State Select
\If{$r = 0$}
\State
\begin{equation}
	\begin{aligned}
		\tilde \pi_{n+1}(s)
		&\in
		\arg\min_{u\in\{1,2\}}
		\Big\{\ell(s,u) \\
		& +
		\int_{\tilde{\cS}} p(s_+|s;u)\,
		j_n(s_+)\,ds_+
		\Big\} \label{eq:pi_n_1}
	\end{aligned}
\end{equation}
\ElsIf{$r \geq 1$} \State \begin{equation} \begin{aligned} \tilde \pi_{n+1}(s) = 0 \end{aligned} \label{eq:pi_n_2} \end{equation}
\EndIf

\State \begin{equation} h_{n+1}(s)=j_{n+1}(s)-j_{n+1}(s_0) \label{eq:h_n} \end{equation} 

\State \begin{equation} \zeta_{n+1}(s)=j_{n+1}(s)-j_n(s) \label{eq:zeta_n} \end{equation}

\EndFor

\If{$\|h_{n+1}-h_n\|_\infty<\epsilon$}
\State \textbf{break}
\EndIf

\EndFor

\State \textbf{return} $\zeta_{n+1}(s)$ and $\tilde \pi_n$

\end{algorithmic}
\end{algorithm}

Fig.~\ref{fig:r1performance} compares the performance of the optimal policy with suboptimal policies 1, 2, and 3 for region $R_1$ as (a) Markovian Channel 1 parameter $p_{10}$, (b) the AR process parameter $a$, (c) the transmission energy unit $\lambda$, (d) Channel 2 delay $d$ are varied, and (e) the suboptimal policy 3 parameter $p^{(pol.)}$ are varied, respectively. It can be seen from Fig.~\ref{fig:r1performance}(a) that as $p_{10}$ increases, Markovian Channel 1 becomes more unreliable, and hence the average cost increases. Fig.~\ref{fig:r1performance}(b) indicates that as $|a|$ increases, the estimation error grows faster according to~\eqref{e_evolve}, again increasing the average cost. Similar increases occur as $\lambda$ or $d$ increases, as depicted in Figs.~\ref{fig:r1performance}(c) and (d), respectively. Fig.~\ref{fig:r1performance}(e) shows that as $p^{(pol.)}$ is varied from $0$ to $1$, the randomized policy, i.e., suboptimal policy 3, interpolates between always using Channel 2 and always using Channel 1. Moreover, it can be seen that the optimal policy outperforms all suboptimal policies. Analogous comparisons for regions $R_2$, $R_3$, and $R_4$ are shown in Figs.~\ref{fig:r2performance}, \ref{fig:r3performance}, and \ref{fig:r4performance}, respectively.

\begin{figure}
	\centering
	\includegraphics[width=1\linewidth]{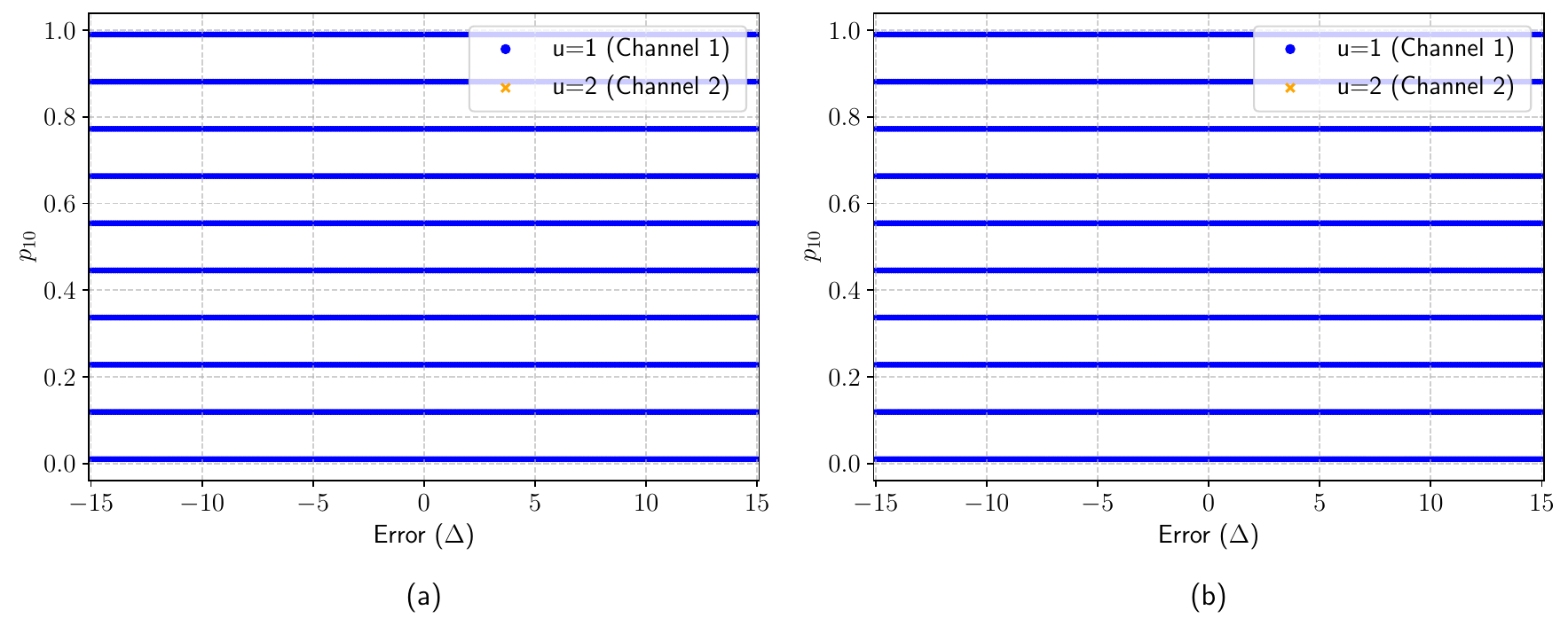}
	\caption{Optimal policy obtained using RVI for region $R_1$ as $p_{10}$ varies with fixed parameters $p_{01} = 0.6, a = 0.9, \lambda = 0.6, d= 4$: (a) $c = 0$; (b) $c = 1$.}
	\label{fig:r1optimalpoliciesscatterv2}
\end{figure}

\begin{figure}
	\centering
	\includegraphics[width=1\linewidth]{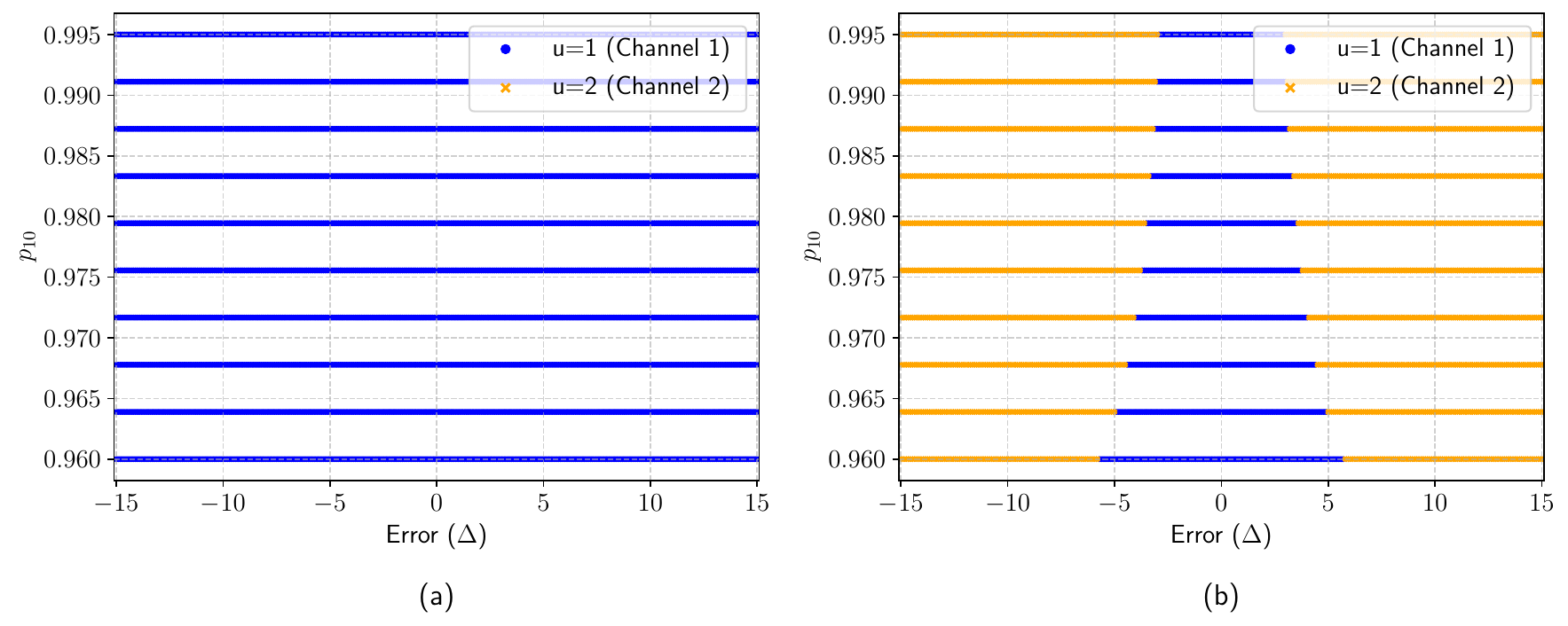}
	\caption{Optimal policy obtained using RVI for region $R_2$ as $p_{10}$ varies with fixed parameters $p_{01} = 0.24, a = 0.9, \lambda = 0.6, d= 4$: (a) $c = 0$; (b) $c = 1$.}
	\label{fig:r2optimalpoliciesscatterv2}
\end{figure}

\begin{figure}
	\centering
	\includegraphics[width=1\linewidth]{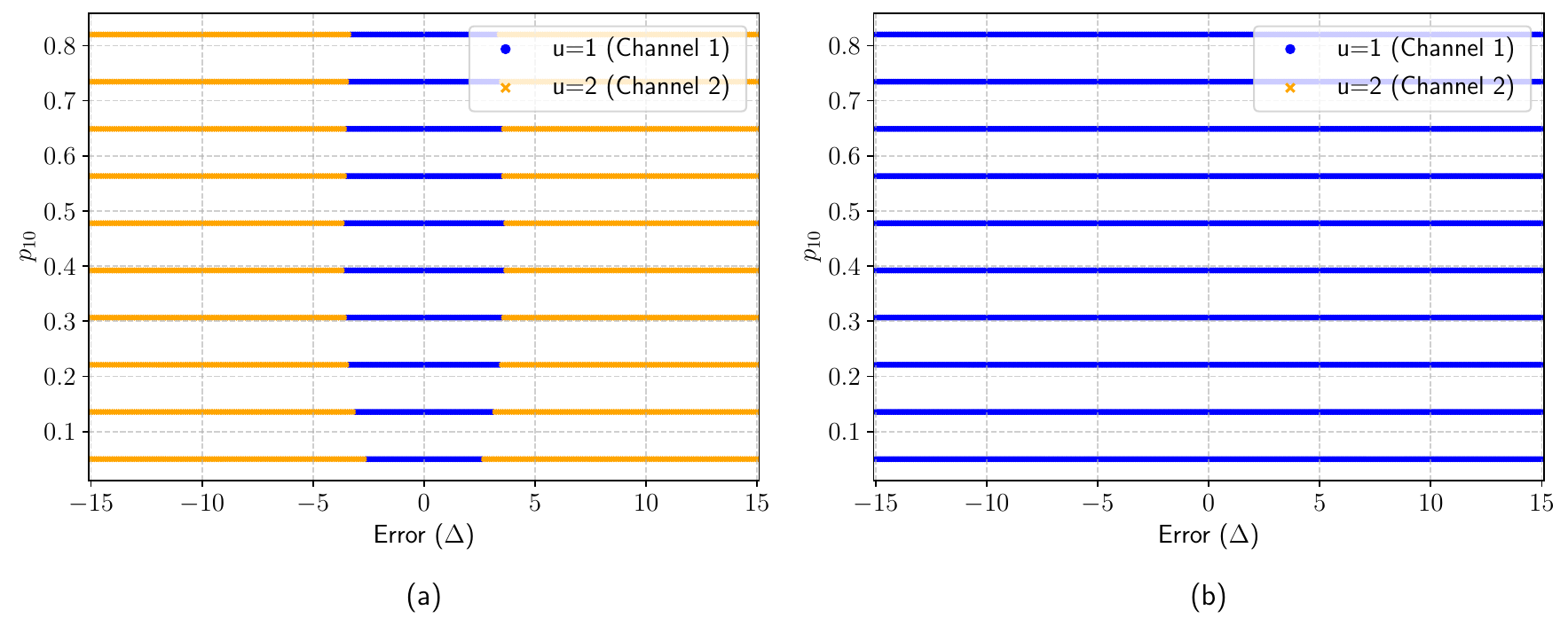}
	\caption{Optimal policy obtained using RVI for region $R_3$ as $p_{10}$ varies with fixed parameters $p_{01} = 0.17, a = 0.9, \lambda = 0.6, d= 4$: (a) $c = 0$; (b) $c = 1$.}
	\label{fig:r3optimalpoliciesscatterv2}
\end{figure}

\begin{figure}
	\centering
	\includegraphics[width=1\linewidth]{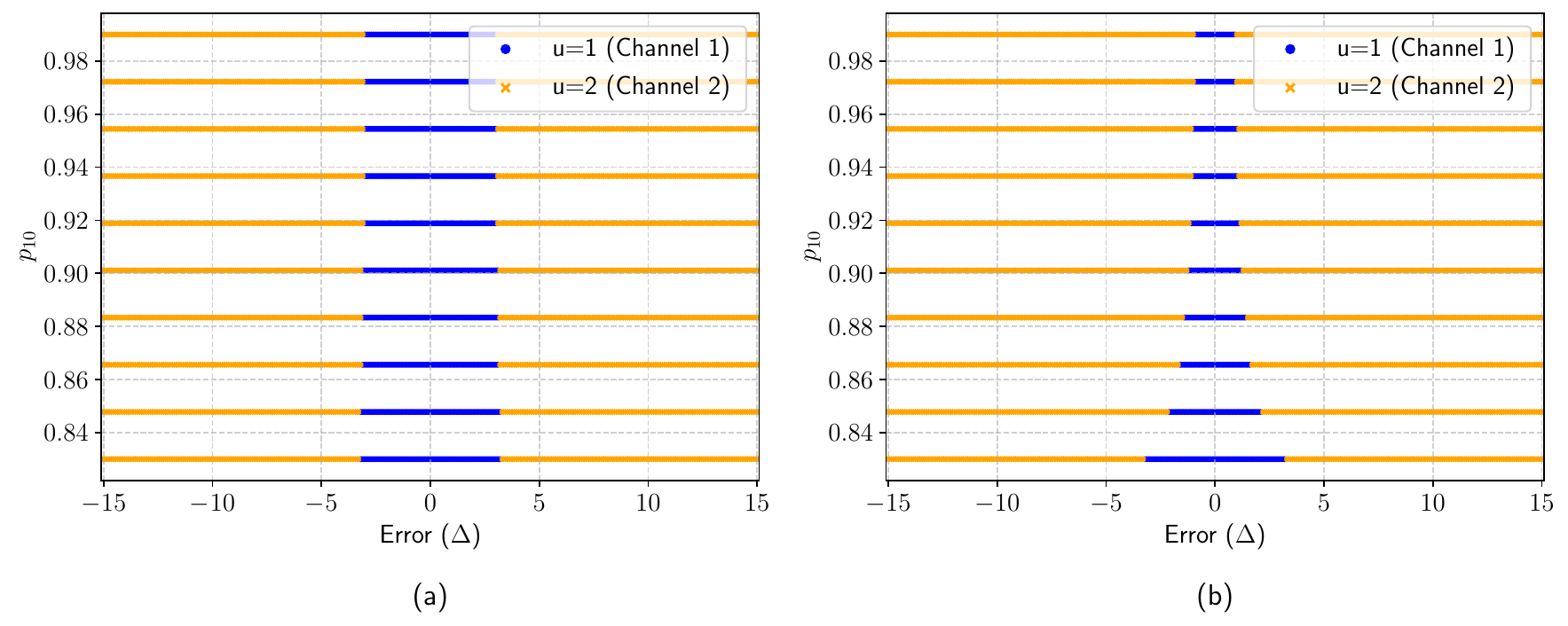}
	\caption{Optimal policy obtained using RVI for region $R_4$ as $p_{10}$ varies with fixed parameters $p_{01} = 0.24, a = 0.9, \lambda = 0.6, d= 4$: (a) $c = 0$; (b) $c = 1$.}
	\label{fig:r4optimalpoliciesscatterv2}
\end{figure}

\begin{figure}
	\centering
	\includegraphics[width=1\linewidth]{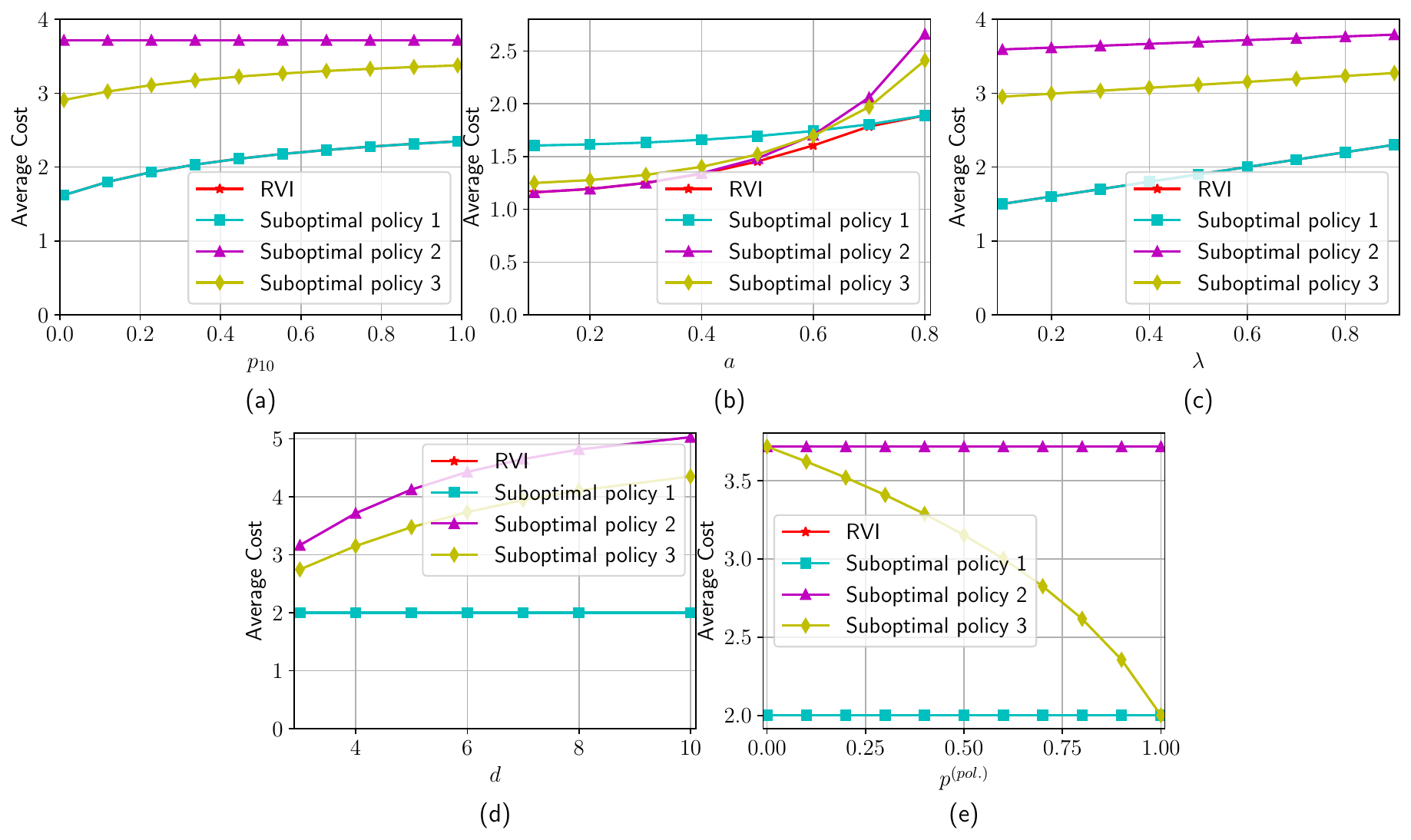}
	\caption{Performance comparison of an optimal policy obtained using RVI and three suboptimal policies in region $R_1$ as (a) $p_{10}$ is varied with $p_{01} = 0.6$, $a = 0.9$, $\lambda = 0.6$, $d = 4$; (b) $a$ is varied with $p_{01} = 0.6$, $p_{10} = 0.3$, $\lambda = 0.6$, $d = 4$; (c) $\lambda$ is varied with $p_{01} = 0.6$, $p_{10} = 0.3$, $a = 0.9$, $d = 4$; (d) $d$ is varied with $p_{01} = 0.6$, $p_{10} = 0.3$, $a = 0.9$, $\lambda = 0.6$; (e) $p^{(pol.)}$ is varied with $p_{01} = 0.6$, $p_{10} = 0.3$, $a = 0.9$, $\lambda = 0.6$, $d = 4$. \textbf{Note}: The RVI plot (in red) is not visible in plots (a),(c), (d), and (e) since the suboptimal policy 1 coincides with the optimal policy, and hence overlaps the red line for RVI.}
	\label{fig:r1performance}
\end{figure}

\begin{figure}
	\centering
	\includegraphics[width=1\linewidth]{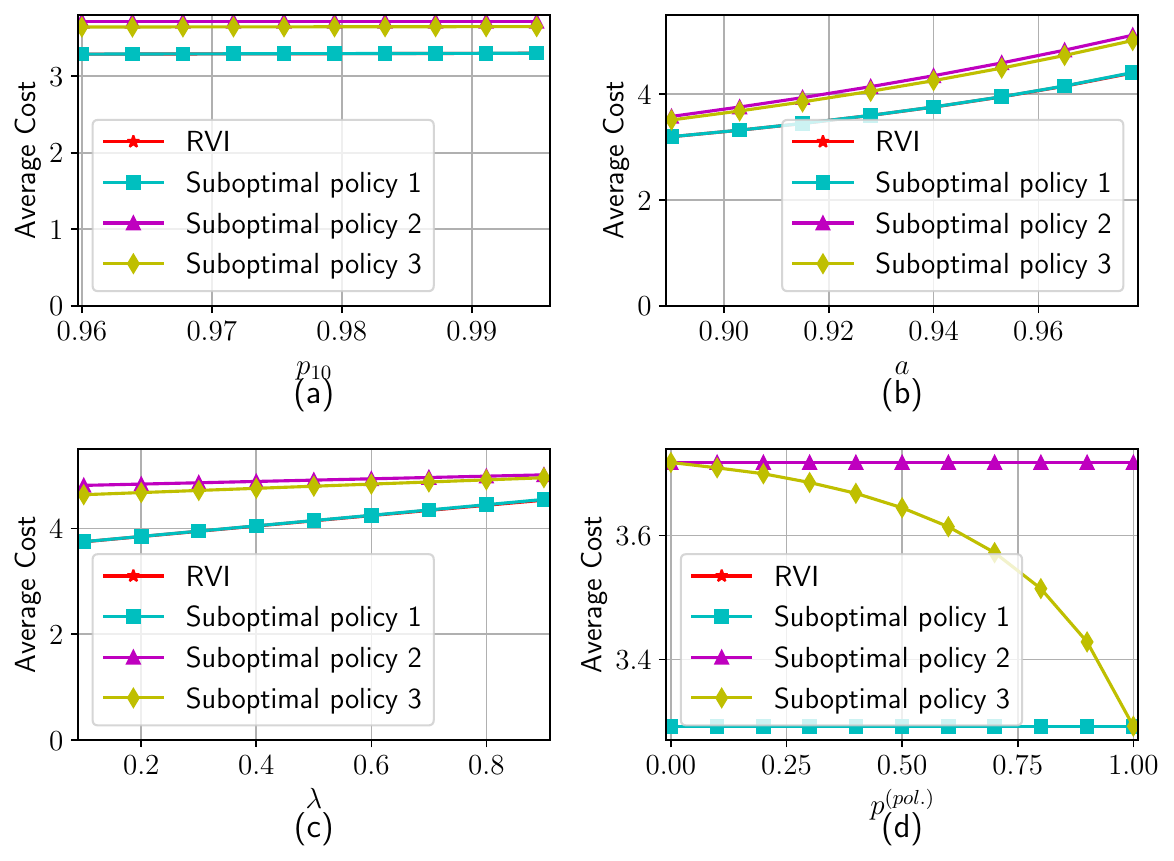}
	\caption{Performance comparison of an optimal policy obtained using RVI and three suboptimal policies in region $R_2$ as (a) $p_{10}$ is varied with $p_{01} = 0.24$, $a = 0.9$, $\lambda = 0.6$, $d = 4$; (b) $a$ is varied with $p_{01} = 0.24$, $p_{10} = 0.97$, $\lambda = 0.6$, $d = 4$; (c) $\lambda$ is varied with $p_{01} = 0.24$, $p_{10} = 0.97$, $a = 0.97$, $d = 4$; (d) $p^{(pol.)}$ is varied with $p_{01} = 0.24$, $p_{10} = 0.97$, $a = 0.9$, $\lambda = 0.6$, $d = 4$. \textbf{Note}: Unlike the other regions, no subplot varying $d$ is shown here. This is because for fixed $a, p_{01}, p_{10}$, at most one value of $d$ satisfies the conditions defining $R_2$ in Definition~3.}
	\label{fig:r2performance}
\end{figure}

\begin{figure}
	\centering
	\includegraphics[width=1\linewidth]{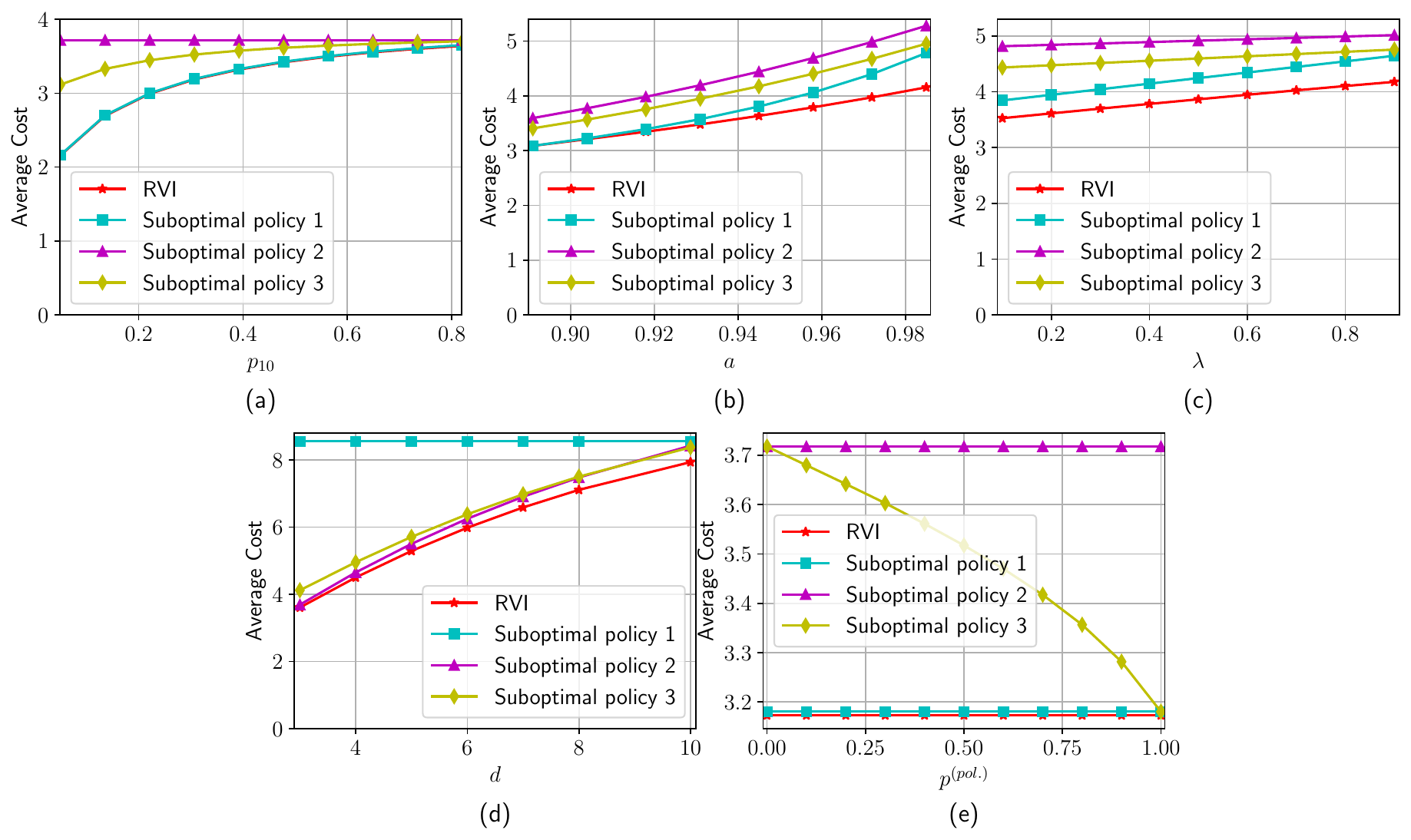}
	\caption{Performance comparison of an optimal policy obtained using RVI and three suboptimal policies in region $R_3$ as (a) $p_{10}$ is varied with $p_{01} = 0.17$, $a = 0.9$, $\lambda = 0.6$, $d = 4$; (b) $a$ is varied with $p_{01} = 0.17$, $p_{10} = 0.3$, $\lambda = 0.6$, $d = 4$; (c) $\lambda$ is varied with $p_{01} = 0.17$, $p_{10} = 0.3$, $a = 0.97$, $d = 4$; (d) $d$ is varied with $p_{01} = 0.04$, $p_{10} = 0.55$, $a = 0.956$, $\lambda = 0.6$; (e) $p^{(pol.)}$ is varied with $p_{01} = 0.17$, $p_{10} = 0.3$, $a = 0.9$, $\lambda = 0.6$, $d = 4$. \textbf{Note}: Unlike the other regions, no subplot varying $d$ is shown here. This is because for fixed $a, p_{01}, p_{10}$, at most one value of $d$ satisfies the conditions defining $R_2$ in Definition~3.}
	\label{fig:r3performance}
\end{figure}

\begin{figure}
	\centering
	\includegraphics[width=1\linewidth]{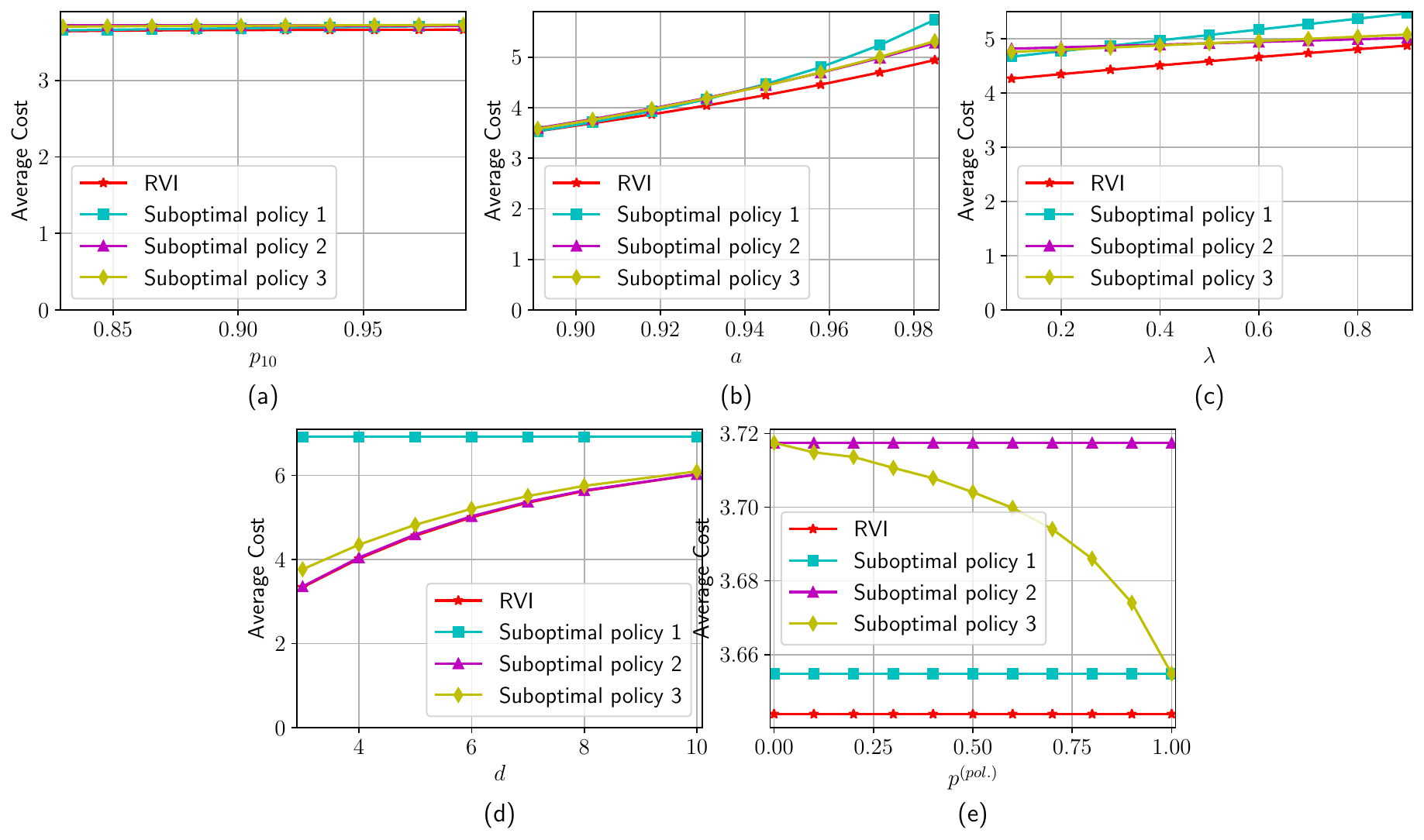}
	\caption{Performance comparison of an optimal policy obtained using RVI and three suboptimal policies in region $R_4$ as (a) $p_{10}$ is varied with $p_{01} = 0.17$, $a = 0.9$, $\lambda = 0.6$, $d = 4$; (b) $a$ is varied with $p_{01} = 0.17$, $p_{10} = 0.83$, $\lambda = 0.6$, $d = 4$; (c) $\lambda$ is varied with $p_{01} = 0.17$, $p_{10} = 0.83$, $a = 0.97$, $d = 4$; (d) $d$ is varied with $p_{01} = 0.01$, $p_{10} = 0.96$, $a = 0.922$, $\lambda = 0.6$; (e) $p^{(pol.)}$ is varied with $p_{01} = 0.17$, $p_{10} = 0.83$, $a = 0.9$, $\lambda = 0.6$, $d = 4$.}
	\label{fig:r4performance}
\end{figure}

\subsection{Actor-Critic Algorithm} \label{subsec:AC}
\begin{figure}
	\centering
	\includegraphics[width=1\linewidth]{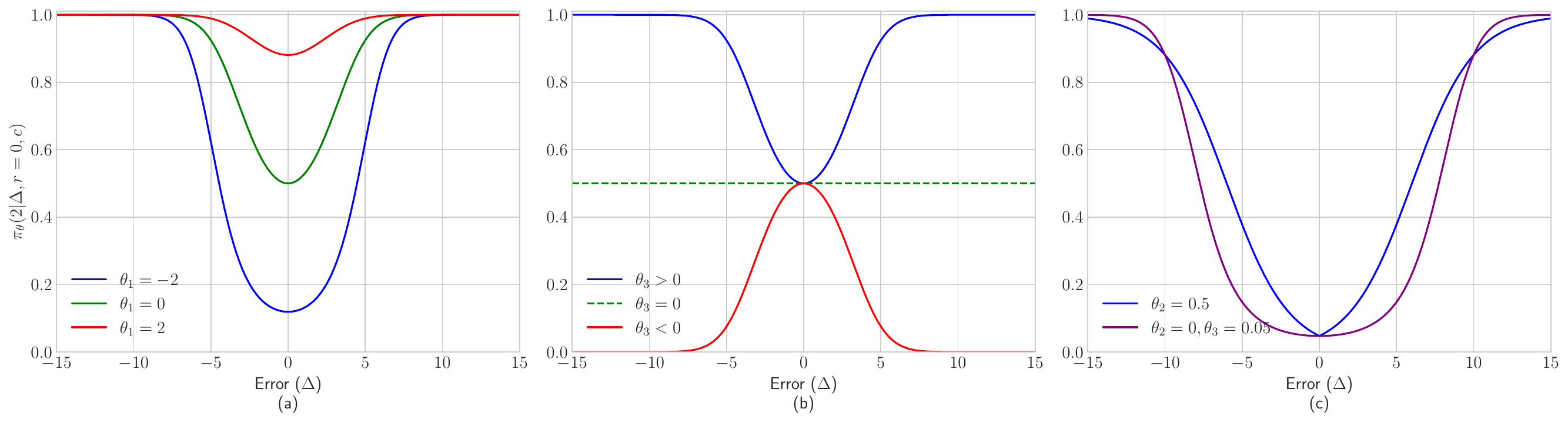}
	\caption{Effect of the policy parameter $\te$ on $\pi_{\te}(2|\D,r=0,c)$. (a) The parameters $\te_1, \te_4$ control the probability of transmitting via Channel 2; (b) the parameters $\te_3, \te_6$ enables the policy $\pi_{\te}(2|\D,r=0,c)$ to capture both non-decreasing and non-increasing threshold-type policy structures; (c) the parameter $\te_2, \te_5$ captures the steepness of the decision boundary.}
	\label{fig:softmax_explain}
\end{figure}

In this section, we introduce the AC algorithm~\cite{konda2003onactor} that learns an optimal policy for the average cost problem~\eqref{opt_prob} when the system parameters ($a, p_{01}, p_{10}, d$) are unknown. Leveraging the structural properties established in Theorem~\ref{thm:avg}, we construct a parameterized class of stationary randomized policies that can represent the threshold-type structures in regions $R_1,\ldots,R_4$. The parameters of this policy are then updated iteratively using the AC algorithm in a direction that improves performance, i.e., reduces the long-run average cost. For $\Delta\in[-15,15]$, let
$\theta=(\theta_1,\ldots,\theta_6)\in\bR^6$ and define
\nal{
& \pi_{\te}(2|\D, r = 0, c=0) = \frac{1}{1 + \exp(-(\te_1 + \te_2 |\D| + \te_3 \D^2))}, \\
& \pi_{\te}(2|\D, r = 0, c = 1) = \frac{1}{1 + \exp(-(\te_4 + \te_5 |\D| + \te_6 \D^2))},
}
where $\pi_{\theta}(u|\Delta, r, c)$ denotes the probability of selecting action $u$ when the current state is $(\Delta, r, c)$. Then, $\pi_{\te}(1|\D, r = 0, c) = 1 - \pi_{\te}(2|\D, r = 0, c) $. The role of $\te$ in the policy parameterization is shown in Fig.~\ref{fig:softmax_explain}. For $r\geq 1$, Channel 2 is busy and the action is fixed as $u=0$, so $\pi_{\theta}(0|\Delta,r,c)=1$ for all $r \geq 1$, $\Delta \in [-15,15]$, and $c \in \{0,1\}$. Let $\zeta(\pi_{\te})$ denote the average cost under policy $\pi_{\te}$, i.e.,
\nal{
\zeta(\pi_{\te}) = \liminf_{T \rightarrow \infty} \frac{1}{T} \bE_{\pi_{\te}} \lf[\sum_{t = 0}^{T} \ell(s(t),u(t))\rt],
}
where $\ell$ is as in~\eqref{instan_cost} and $\bE_{\pi_{\te}}$ denotes the expectation with respect to the measure induced by the policy $\pi_{\te}$. Next, for every $\te \in \bR^6$, we define the advantage function~\cite{konda2003onactor} $\rho_{\te}: \Tilde{\cS} \times \{0,1,2\} \rightarrow \bR$ as follows:
\nal{
\rho_{\te}(s,u) = \ell(s,u) - \zeta(\pi_{\te}) + \bE_{\pi_{\te}} \lf[h(s_+) \mid s,u\rt],
} 
where $\ell$ is as in~\eqref{instan_cost}, $s_+ \in \Tilde{\cS}$ denotes the next state given that the current state is $s$ and the current action is $u$, and $h$ is as in~\eqref{eq:ACOE}. The critic estimates $\rho_{\te}$, and the actor uses this estimate to update $\te$. Since the state space $\Tilde{\cS}$ is continuous, we employ linear function approximation~\cite{konda2003onactor} for the critic to estimate $\rho_{\te}$.~The critic computes an approximation of $\rho_{\te}$, which is denoted by $\Tilde{\rho}_{\te}$ and is described below for $s \in \Tilde{\cS}$, $u \in \{0,1,2\}$.
\nal{
\Tilde{\rho}_{\te}^{(w)}(s,u) = \sum_{i=1}^{n^{(critic)}} \omega_i \phi_{\te}^{(i)}(s,u),
}
where $n^{(critic)} \in \bN$ denotes the number of critic parameters, $\omega = (\omega_1, \omega_2, \ldots, \omega_{n^{(critic)}})^T$ is the critic weight vector, and $\phi_{\te} = (\phi_{\te}^{(1)}, \phi_{\te}^{(2)}, \ldots, \phi_{\te}^{(n^{(critic)})})^{T}$ denotes  the feature vector used by the critic which depends on the actor parameter vector $\te$. Following~\cite{konda2003onactor}, which allows the number of critic parameters $n^{(critic)}$ to exceed the number of actor parameters, we choose the critic feature vector $\phi_{\te}$ as described below with $n^{(critic)} = 13$. For $\D \in [-15,15]$, $c \in \{0,1\}$, $r = 0$, $u \in \{1,2\}$, 
\nal{
	& \phi_{\theta}(\Delta,r,c,u) \\
	& =
	\begin{cases}
		\begin{aligned}
			\big[ &\nabla_{\theta}\log\pi_{\theta}(u|\Delta,r,c), \mathbf{0}_7 \big]^T
		\end{aligned}
		& \text{if } r=0,\; u\in\{1,2\}, \\[6pt]
		\begin{aligned}
			\big[ &\mathbf{0}_6, 1, \Delta, \Delta^2, c, r, r\Delta^2, \\
			& c\log(1+|\Delta|) \big]^T
		\end{aligned}
		& \text{if } r\ge1,\; u=0, 
	\end{cases}
}
where $\nabla_{\te}$ denotes the gradient with respect to $\te$, and $\mathbf{0}_n$ denotes the $n \times 1$ zero vector.

The pseudocode for the AC algorithm is given in Algorithm~\ref{algo:AC}. At each time $t$, the critic maintains the weight vector $\omega(t)$ (cf.~\eqref{eq:critic_update}), an
average cost estimate $\hat{\zeta}(t)$ (cf.~\eqref{eq:zeta}), and the eligibility trace $z(t)$~\cite{sutton1998reinforcement} (cf.~\eqref{eq:z}), where $\Tilde{s} \in \Tilde{\cS}$ is a fixed recurrent reference state chosen prior to learning. The actor maintains
the parameter vector $\theta(t)$ (cf.~\eqref{eq:actor_update}) at time $t$. Then, under Assumptions 3.3, 4.1, 4.2, 4.4, 4.5, 4.8, and 4.9 of~\cite{konda2003onactor}, the AC algorithm satisfies the following convergence result, which is shown in~\cite[Theorem 6.3]{konda2003onactor}.

\begin{theorem}
	Consider the AC algorithm presented in Algorithm~\ref{algo:AC} with TD(1) critic and linear function approximation. Then, $\liminf_t |\nabla_{\theta} \zeta(\pi_{\theta(t)})| = 0$ with probability 1, where $\zeta(\cdot)$ is the average cost function, and $\nabla_{\theta} \zeta(\pi_{\theta(t)})$ represents the gradient of $\zeta(\pi_{\theta})$ with respect to $\theta$, evaluated at $\theta(t)$.
\end{theorem}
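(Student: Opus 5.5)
The plan is to obtain this theorem as a direct application of \cite[Theorem 6.3]{konda2003onactor}: check the standing assumptions listed there against our parameterization and critic features, and then quote the conclusion. The argument therefore has three parts: the Markov chain and its ergodicity, the smoothness of the policy parameterization and of the critic features, and the step-size conditions.

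\textbf{Ergodicity.} I would first treat the process $s(t)$ on the truncated space $\Tilde{\cS}$ under any $\pi_{\te}$ as a Markov chain whose kernel is given by Table~\ref{tab:df}, with $\D$ clipped to $[-15,15]$. I would then verify the uniform ergodicity and moment requirements (Assumptions 3.3 and 4.1--4.2 of \cite{konda2003onactor}) via a Doeblin-type minorization.
\begin{itemize}
\item Because $\pi_{\te}(u\mid s)\in(0,1)$ for every finite $\te$ and every $s$ with $r=0$, and $p_{01},p_{10}\in(0,1)$, every discrete state $(r,c)$ is reached with positive probability within $d+1$ steps.
\item After a Channel~2 delivery, $\D_+\sim\cN(0,\sigma^2_d)$. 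After a Channel~1 success, $\D_+\sim\cN(0,1)$. Both laws are independent of the current $\D$.
\item This gives a uniform minorization $P^{d+1}(s,\cdot)\ge \epsilon\,\nu(\cdot)$, with $\nu$ the law of the post-delivery state.
\end{itemize}
Since $\ell$ is bounded on $\Tilde{\cS}$ (at most $225+\lambda$), all moment conditions then hold trivially. The one place this needs care is uniformity in $\te$. Assumption 4.1 of \cite{konda2003onactor} needs ergodicity uniformly over the parameter set, while $\pi_{\te}(1\mid\cdot)$ may approach $0$ or $1$ as $|\te|\to\infty$. I would resolve this by noting that Channel~2 is always reachable with positive probability regardless of $\te$. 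Concretely, if $\pi_\te(2\mid\cdot)\to 0$, the chain still regenerates through Channel~1 successes, at rate $p_{c1}$, which is bounded away from zero. So the minorization constant can be taken independent of $\te$.

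\textbf{Smoothness of the actor and critic.} Next I would check that $\te\mapsto\pi_{\te}(u\mid s)$ is twice continuously differentiable with bounded derivatives. This follows because the logistic function is smooth and the features $1,|\D|,\D^2$ are bounded on $[-15,15]$. Explicitly, $\nabla_\te\log\pi_\te(2\mid s)=(1-\pi_\te(2\mid s))(1,|\D|,\D^2)$ on the relevant block, and $\nabla_\te\log\pi_\te(1\mid s)=-\pi_\te(2\mid s)(1,|\D|,\D^2)$. These are bounded and Lipschitz in $\te$, uniformly in $s$.

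For the critic, I would use the fact that the first six features $\phi_\te$ span $\nabla_\te\log\pi_\te$. This is the compatibility condition (Assumption 4.4) under which the critic projection recovers the exact policy gradient. The remaining seven state-dependent features are fixed and bounded. I also need linear independence of the $\phi_\te$ components under the stationary distribution (Assumption 4.5), and I would argue it from the positivity of the stationary density on $[-15,15]$.

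\textbf{Step sizes.} Finally I would fix step sizes $\alpha_t,\beta_t$ with $\sum\beta_t=\infty$, $\sum\beta_t^2<\infty$ and $\alpha_t/\beta_t\to 0$. These are the two-timescale conditions of Assumptions 4.8 and 4.9. With all assumptions in place, \cite[Theorem 6.3]{konda2003onactor} yields $\liminf_t|\nabla_\te\zeta(\pi_{\te(t)})|=0$ almost surely for the TD(1) critic.

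\textbf{Main obstacle.} I expect the hardest point to be the uniform-in-$\te$ ergodicity, together with linear independence of the features as $\te$ drifts to regions where the policy is nearly deterministic. If the regeneration argument above is not strong enough, I would restrict $\te$ to a large compact set via projection, which is standard in \cite{konda2003onactor}.
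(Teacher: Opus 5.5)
Your route is the paper's. The paper gives no argument beyond invoking \cite[Theorem 6.3]{konda2003onactor} \emph{under} Assumptions 3.3, 4.1, 4.2, 4.4, 4.5, 4.8, 4.9 of Konda--Tsitsiklis, which it never checks. The gap is in the verification you add, and it fails exactly where you locate the crux.

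\textbf{Uniform ergodicity.} Your claim that the minorization constant can be taken independent of $\te$ is false. Your regeneration argument covers only the harmless limit $\pi_{\te}(2\mid\cdot)\to 0$. Consider instead $\te_1,\te_4\to+\infty$ with the other components zero, so that $\pi_{\te}(1\mid\cdot)=\delta\to 0$ uniformly on $[-15,15]$. Then Channel~1 is essentially never used. Each visit to $r=0$ sends the chain around the deterministic cycle $0\to d-1\to\cdots\to 1\to 0$ with probability $1-\delta$, so the $r$-coordinate becomes $d$-periodic with $d\ge 2$. For any fixed $m$, $P^m((\D,0,c),\cdot)$ and $P^m((\D,1,c),\cdot)$ put all but $O(m\delta)$ of their mass on different values of $r$. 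Hence any common bound $P^m(s,\cdot)\ge\epsilon\,\nu(\cdot)$, with $\nu$ a probability measure, forces $\epsilon=O(m\delta)\to 0$. A $\te$-uniform Doeblin bound is therefore available only on sets where $\pi_{\te}(1\mid\cdot)$ stays bounded away from zero, for example compact subsets of $\bR^6$. But $\te(t)$ in Algorithm~\ref{algo:AC} is unconstrained. Your projection fallback changes the algorithm whose convergence is being claimed.

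\textbf{Feature independence.} This step fails even for fixed $\te$. Positivity of the stationary density in $\D$ cannot give linear independence when the dependence sits in the discrete coordinates. For $d=2$, the last seven critic features are supported on $\{r\ge 1\}=\{r=1\}$. There the feature $r$ coincides with the constant feature, and $r\D^2$ coincides with $\D^2$. So $\bE_{\te}[\phi_{\te}\phi_{\te}^T]$ is singular for every $\te$, and no projection repairs this. For $d\ge 3$ the matrix is nonsingular for each finite $\te$, but not uniformly so:
\begin{itemize}
\item the score block carries the factor $\pi_{\te}(2\mid\cdot)\bigl(1-\pi_{\te}(2\mid\cdot)\bigr)$, which vanishes as the logistic saturates;
\item the set $\{r\ge 1\}$ loses its stationary mass as $\pi_{\te}(2\mid\cdot)\to 0$.
\end{itemize}

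\textbf{TD(1) reset.} You do not check this at all. The trace in Algorithm~\ref{algo:AC} restarts only when $s(t)=\Tilde{s}$. Since $\D(t)$ has a density on $(-15,15)$, this event has probability zero for any reference state with $|\D|<15$. So the trace never resets, and the recurrence the paper itself demands of $\Tilde{s}$ fails.

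The statement can therefore only be supported the way the paper does it, conditionally on the cited assumptions. Making it unconditional requires modifying the algorithm: project $\te$ onto a compact set, remove the redundant features when $d=2$, and reset on an atom of a split chain rather than on a single point.
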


\begin{algorithm}
	\caption{Actor-Critic (AC) Algorithm}
	\label{algo:AC}
	\begin{algorithmic}[0]
		
		\Require Policy parameterization $\pi_{\theta}(u|s)$
		\Require Advantage function approximation $\tilde{\rho}^{(\omega)}_{\theta}(s,u)$
		\Require Step sizes $\alpha^{(\theta)} > 0$, $\alpha^{(\omega)} > 0$
		
		\State Initialize $\theta(0)$, $\omega(0)$
		\State Initialize state $s(0) \in \Tilde{\mathcal S}$ and average cost estimate $\hat{\zeta}(0)$
		\State Sample $u(0) \sim \pi_{\theta(0)}(\cdot|s(0))$
		\State Set $z(0) = \phi_{\theta(0)}(s(0),u(0))$
		
		\For{$t = 1,2,\ldots,T$}
		
		\State Observe cost $\ell(s(t-1),u(t-1))$ (cf.~\eqref{instan_cost})
		\State Observe next state $(s(t))$ (cf.~\eqref{e_evolve})
		
		\State Sample $u(t) \sim \pi_{\theta(t-1)}(\cdot|s(t))$
		
		\State Update average cost estimate
		\al{
		\hat{\zeta}(t) & = \hat{\zeta}(t-1) \notag\\
		& +
		\alpha^{(\omega)}_t \bigl[\ell(s(t-1),u(t-1)) - \hat{\zeta}(t-1)
		\bigr] \label{eq:zeta}
		}	
		
		\State Update critic
		\al{\label{eq:critic_update}
		\omega(t) = \omega(t-1) + \alpha^{(\omega)}_t \delta(t) z(t-1)
		}
		where
		\[
		\begin{aligned}
			\delta(t) = &\ \ell(s(t-1),u(t-1)) - \hat{\zeta}(t-1) \\
			& + \tilde{\rho}^{(\omega(t-1))}_{\theta(t-1)}(s(t),u(t)) \\
			& - \tilde{\rho}^{(\omega(t-1))}_{\theta(t-1)}(s(t-1),u(t-1))
		\end{aligned}
		\]
		
		\State Update eligibility trace $z(t)$ (TD(1) critic) with $\Tilde{s} \in \Tilde{\cS}$
		\al{\label{eq:z}
		z(t)=
		\begin{cases}
			z(t-1) + \phi_{\theta(t-1)}(s(t),u(t)) 
			& \text{if } s(t) \neq \tilde s, \\
			\phi_{\theta(t-1)}(s(t),u(t)) & \text{otherwise}
		\end{cases}
	}
		
		\State Update actor
		\al{
		\theta(t) =~& \theta(t-1) -
		\alpha^{(\theta)}_t \tilde{\rho}^{(\omega(t-1))}_{\theta}(s(t),u(t)) \notag \\
		& \times  \nabla_{\te} \log \pi_{\te(t-1)}(u(t) \mid s(t)) \label{eq:actor_update}
	}
		
		\EndFor
		
	\end{algorithmic}
\end{algorithm}

We next present simulation results for the AC algorithm. The step-size sequences are chosen as $\alpha_t^{(\omega)} = 1/t^{0.6}$ and $\alpha_t^{(\theta)} = 1/t^{0.7}$. This choice ensures that the actor iterates evolve on a slower time-scale relative to the critic (cf.~\cite[Assumption 3.3]{konda2003onactor}), thereby yielding a standard two time-scale stochastic approximation scheme~\cite{bertsekas2012dynamic}. All reported results for the AC are obtained by averaging over $5$ independent runs and we plot the mean along with $\pm$ one standard deviation. In Fig.~\ref{fig:acrvivir1performancev4}, we compare the performance of the AC algorithm with RVI and VI for the $\beta$-discounted cost as described in Section~\ref{subsec:VI} for region $R_1$. The comparison is carried out by varying the parameters $p_{10}$, $a$, $\lambda$, and $d$ for $R_1$. We expect  the optimal policy obtained via RVI to outperform the AC algorithm. This is because (i) unlike AC, RVI has access to the true system parameters, and (ii) the AC algorithm relies on linear function approximation, which introduces approximation error. Nevertheless, as seen from Fig.~\ref{fig:acrvivir1performancev4}, the AC algorithm exhibits satisfactory performance. Moreover, it can be seen that as $\beta \uparrow 1$, the performance of VI approaches that of RVI, consistent with Proposition~\ref{prop:opt_pol_convg}. Similar trends for regions $R_2$, $R_3$, and $R_4$  are shown  in Figs.~\ref{fig:acrvivir2performancev4},~\ref{fig:acrvivir3performancev4},~\ref{fig:acrvivir4performancev4}, respectively.

Fig.~\ref{fig:softmaxacpolicystructure} shows the policy learned by the AC algorithm for the regions $R_1,\ldots,R_4$. We observe that the learned policies are consistent with the structural properties established in Theorem~\ref{thm:avg}. In particular, for $R_1$, the probability of selecting action $u=2$ remains nearly constant and is negligible for both $c \in \{0,1\}$. For $R_2$, the policy is decreasing in $|\D|$ for $c=0$ and increasing in $|\D|$ for $c=1$. For $R_3$, the reverse trend is observed, with the policy increasing in $|\D|$ for $c=0$ and decreasing for $c=1$. Finally, for $R_4$, the probability of selecting $u=2$ increases with $|\D|$ for each $c \in \{0,1\}$. These observations indicate that the AC algorithm is able to capture the threshold-type structure predicted by Theorem~\ref{thm:avg}.

\begin{figure}
	\centering
	\includegraphics[width=1\linewidth]{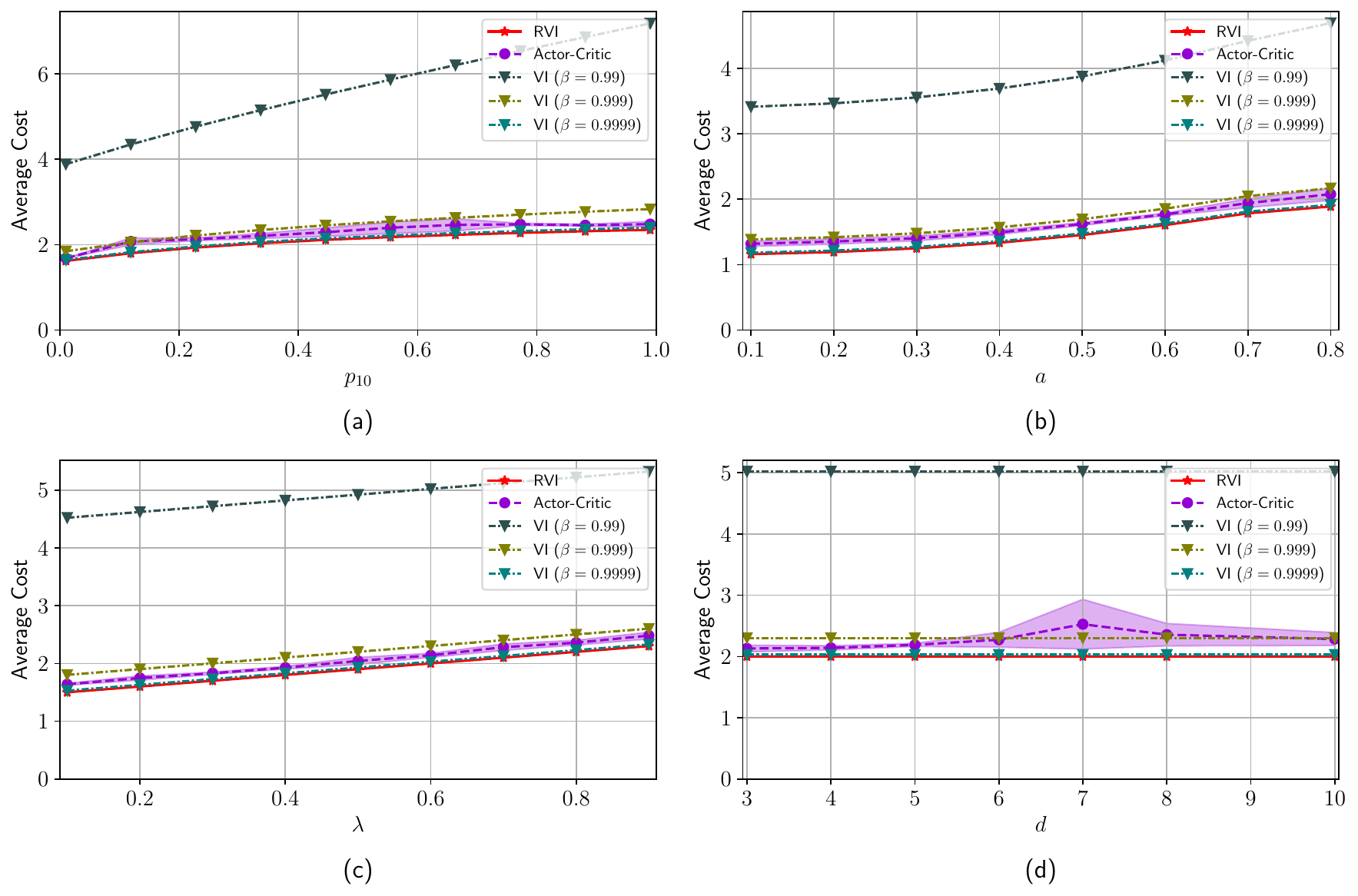}
	\caption{Performance comparison of AC, RVI, and VI under varying system parameters for region $R_1$. AC results are averaged over independent runs, and the mean with $\pm$ one standard deviation is shown. The AC parameters are set as $T = 10^4$, $\alpha_t^{(\omega)} = 1/t^{0.6}$, and $\alpha_t^{(\theta)} = 1/t^{0.7}$. Subplots correspond to: (a) varying $p_{10}$ with $p_{01} = 0.6$, $a = 0.9$, $\lambda = 0.6$, $d = 4$; (b) varying $a$ with $p_{01} = 0.6$, $p_{10} = 0.3$, $\lambda = 0.6$, $d = 4$; (c) varying $\lambda$ with $p_{01} = 0.6$, $p_{10} = 0.3$, $a = 0.9$, $d = 4$; and (d) varying $d$ with $p_{01} = 0.6$, $p_{10} = 0.3$, $a = 0.9$, $\lambda = 0.6$.}
	\label{fig:acrvivir1performancev4}
\end{figure}

\begin{figure}
	\centering
	\includegraphics[width=1\linewidth]{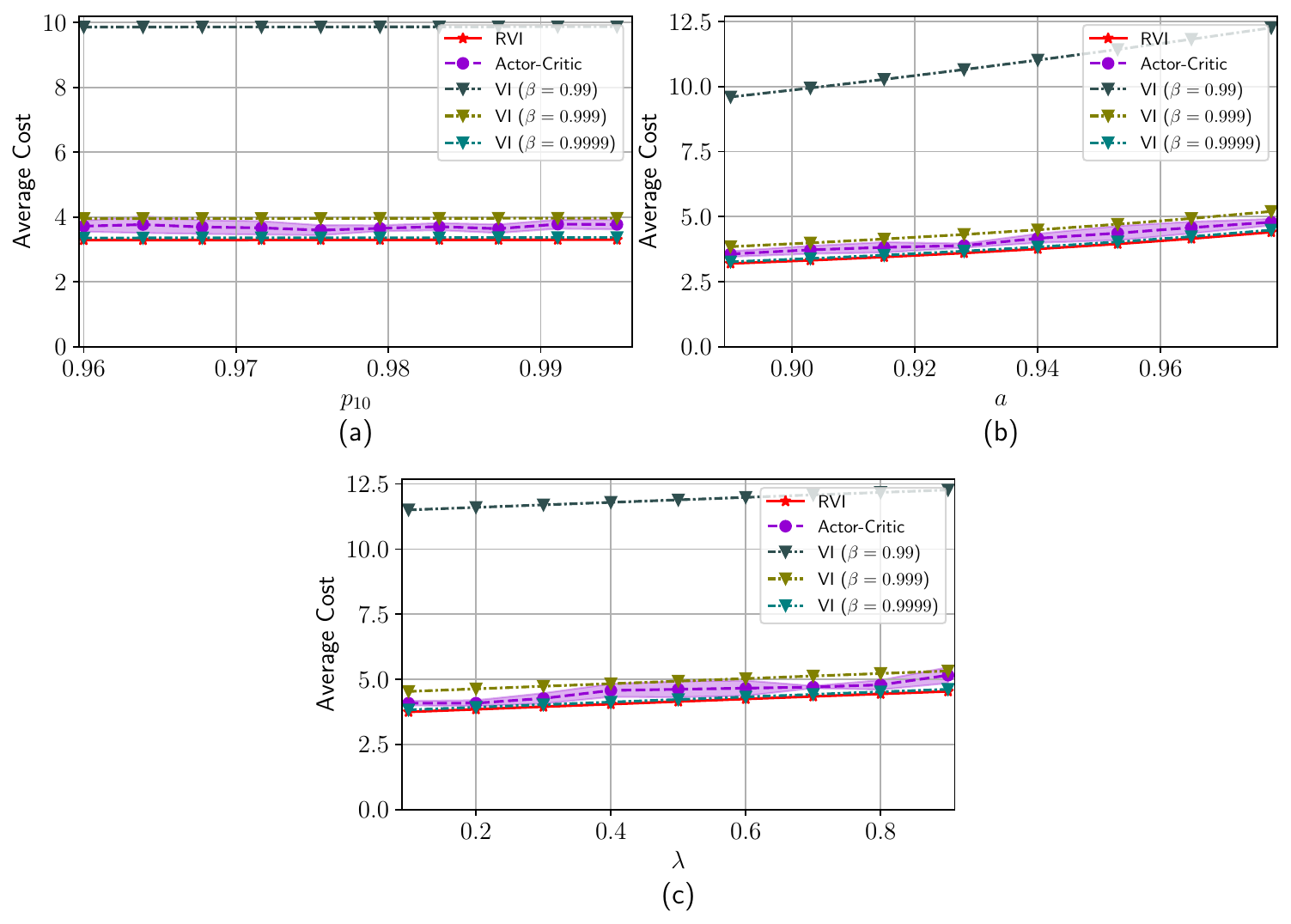}
	\caption{Performance comparison of of AC, RVI, and VI under varying system parameters for region $R_2$. AC results are averaged over independent runs, and the mean with $\pm$ one standard deviation is shown. The AC parameters are set as $T = 10^4$, $\alpha_t^{(\omega)} = 1/t^{0.6}$, and $\alpha_t^{(\theta)} = 1/t^{0.7}$. Subplots correspond to: (a) varying $p_{10}$ $p_{01} = 0.24$, $a = 0.9$, $\lambda = 0.6$, $d = 4$; (b) varying $a$ with $p_{01} = 0.6$, $p_{10} = 0.37$, $\lambda = 0.6$, $d = 4$; and (c) varying $\lambda$ with $p_{01} = 0.6$, $p_{10} = 0.3$, $a = 0.97$, $d = 4$. \textbf{Note}: Unlike the other regions, no subplot varying $d$ is shown here. This is because for fixed $a, p_{01}, p_{10}$, at most one value of $d$ satisfies the conditions defining $R_2$ in Definition~3.}
	\label{fig:acrvivir2performancev4}
\end{figure}

\begin{figure}
	\centering
	\includegraphics[width=1\linewidth]{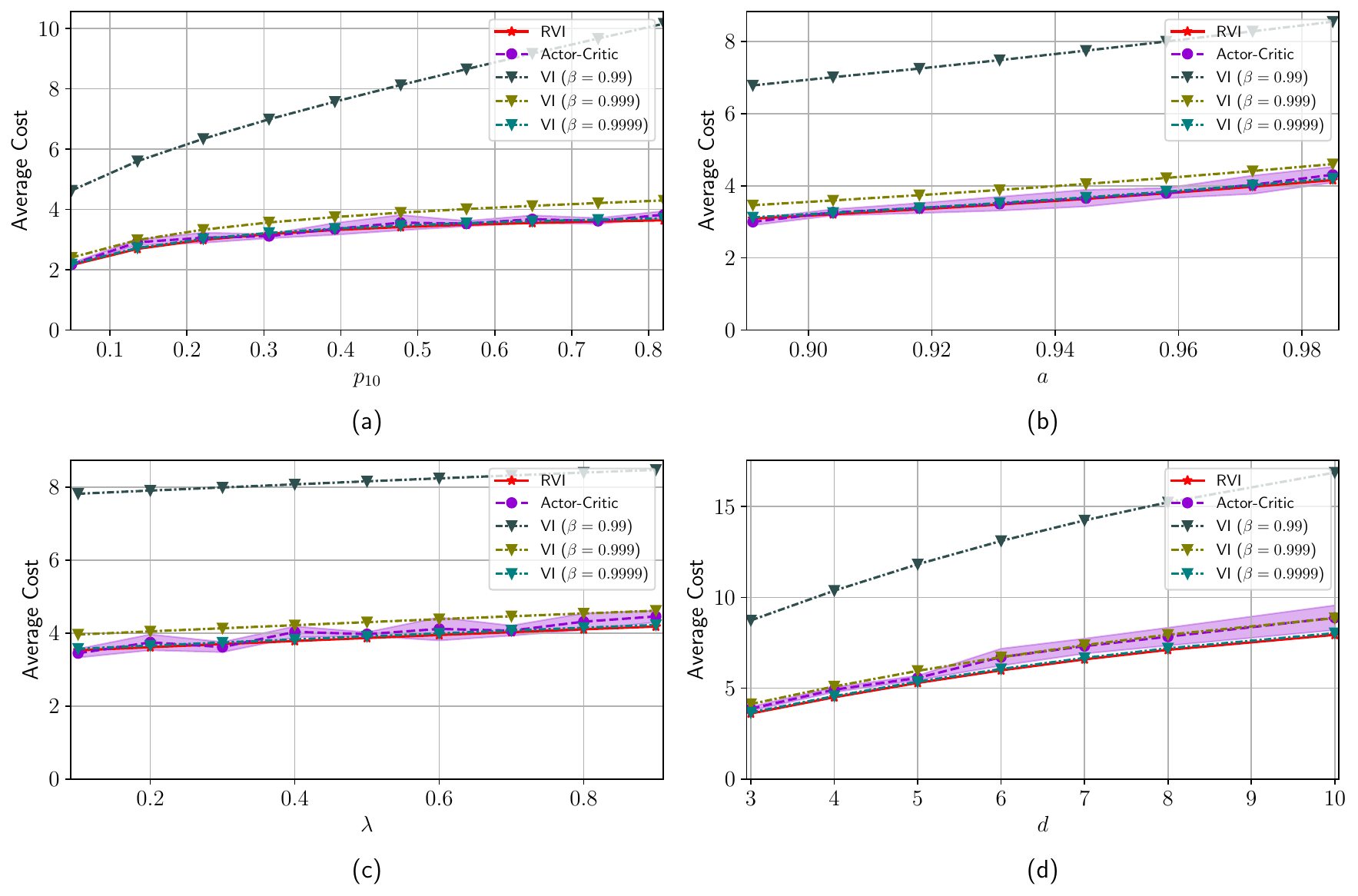}
	\caption{Performance comparison of of AC, RVI, and VI under varying system parameters for region $R_3$. AC results are averaged over independent runs, and the mean with $\pm$ one standard deviation is shown. The AC parameters are set as $T = 10^4$, $\alpha_t^{(\omega)} = 1/t^{0.6}$, and $\alpha_t^{(\theta)} = 1/t^{0.7}$. Subplots correspond to: (a) varying $p_{10}$ $p_{01} = 0.17$, $a = 0.9$, $\lambda = 0.6$, $d = 4$; (b) varying $a$ with $p_{01} = 0.17$, $p_{10} = 0.3$, $\lambda = 0.6$, $d = 4$; (c) varying $\lambda$ with $p_{01} = 0.6$, $p_{10} = 0.3$, $a = 0.97$, $d = 4$; and (d) varying $d$ with $p_{01} = 0.04$, $p_{10} = 0.55$, $a = 0.956$, $\lambda = 0.6$.}
	\label{fig:acrvivir3performancev4}
\end{figure}

\begin{figure}
	\centering
	\includegraphics[width=1\linewidth]{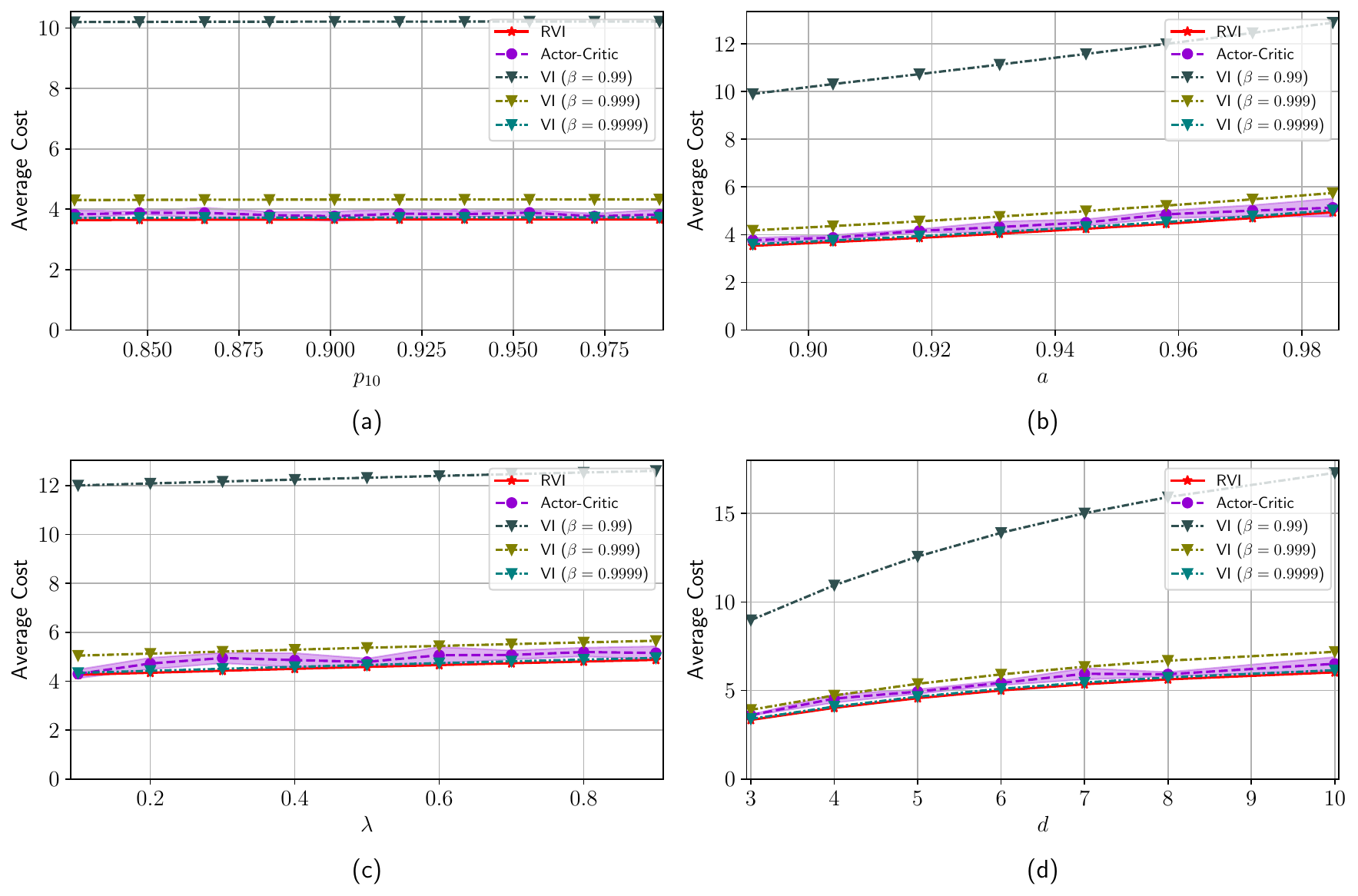}
	\caption{Performance comparison of of AC, RVI, and VI under varying system parameters for region $R_4$. AC results are averaged over independent runs, and the mean with $\pm$ one standard deviation is shown. The AC parameters are set as $T = 10^4$, $\alpha_t^{(\omega)} = 1/t^{0.6}$, and $\alpha_t^{(\theta)} = 1/t^{0.7}$. Subplots correspond to: (a) varying $p_{10}$ with $p_{01} = 0.17$, $a = 0.9$, $\lambda = 0.6$, $d = 4$; (b) varying $a$ with $p_{01} = 0.17$, $p_{10} = 0.83$, $\lambda = 0.6$, $d = 4$; (c) varying $\lambda$ with $p_{01} = 0.17$, $p_{10} = 0.83$, $a = 0.97$, $d = 4$; and (d) varying $d$ with $p_{01} = 0.01$, $p_{10} = 0.96$, $a = 0.922$, $\lambda = 0.6$.}
	\label{fig:acrvivir4performancev4}
\end{figure}

\begin{figure}
	\centering
	\includegraphics[width=1\linewidth]{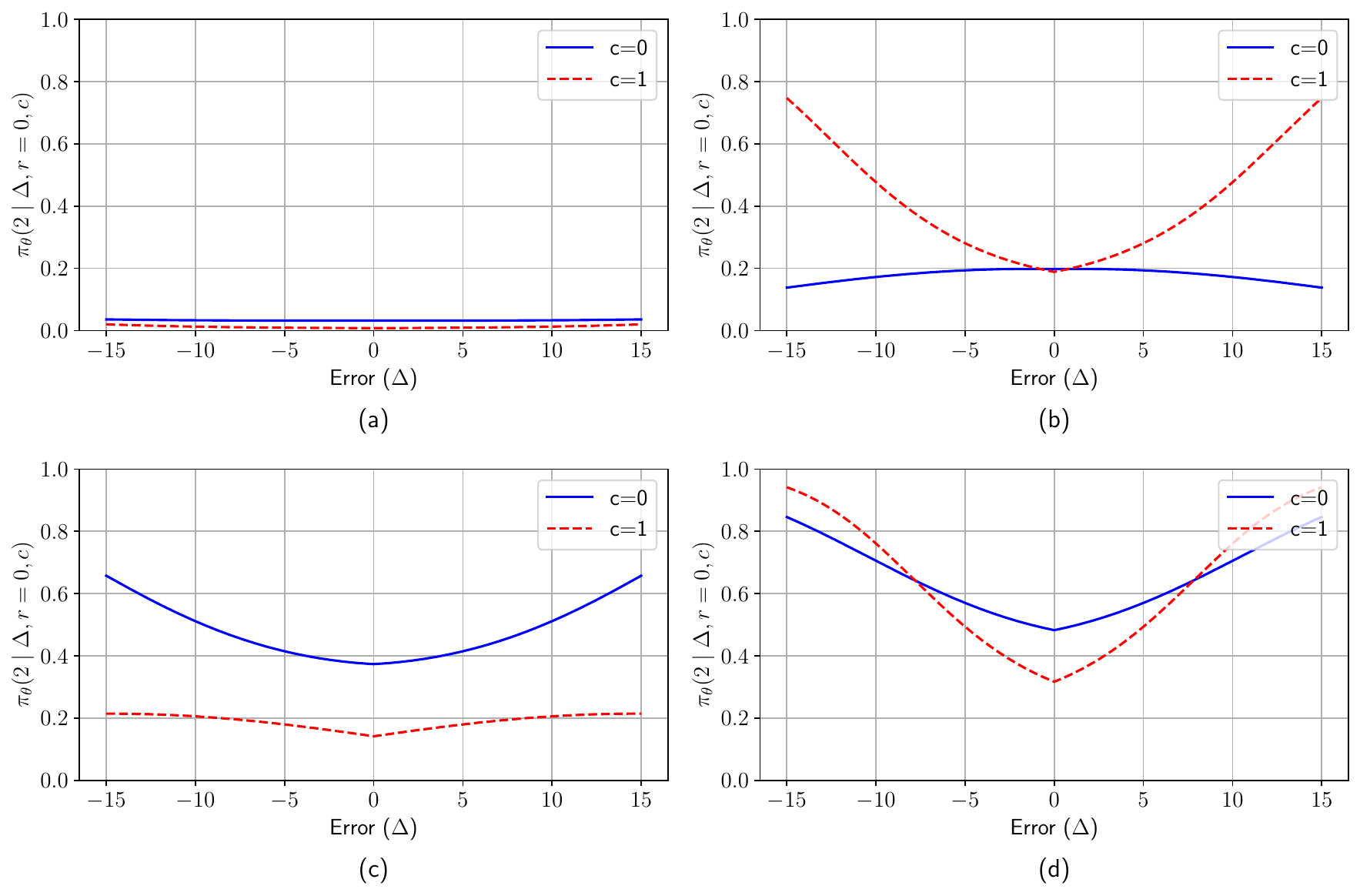}
	\caption{Policy learned by the AC algorithm in regions $R_1,\ldots,R_4$. Each plot shows $\pi_\theta(2\mid\Delta,0,c)$ as a function of $\Delta$ for $c\in\{0,1\}$. The learned policies reflect the threshold-type structure characterized in Theorem~\ref{thm:avg}.}
	\label{fig:softmaxacpolicystructure}
\end{figure}

\section{Conclusion} \label{sec:conclusion}

We study a remote state estimation problem over a hybrid communication architecture consisting of a fast unreliable channel and a slower reliable channel. We formulate the problem as an MDP under an infinite horizon average cost criterion, where the cost comprises squared estimation error and transmission energy consumed. Despite the challenges arising from the continuous state space, unbounded cost, and different channel dynamics, we establish the existence of an optimal stationary policy. We then characterize a threshold-type structure for an optimal policy and show that the
direction of the threshold depends on the AR and channel parameters. These structural results are useful for designing scheduling policies when the system parameters are unknown. We propose an AC learning algorithm that exploits this structure and demonstrate its effectiveness through numerical experiments.
\appendix

\begin{lemma} \label{lemma:even}
    The functions $V\ub(\cdot, r, c)$~\eqref{eq:V}, $Q\ub(\cdot, r, c;u)$~\eqref{eq:Q}, corresponding to the MDP~\eqref{discount_opt_prob} are even, i.e., we have the following for $(\D,r,c) \in \bR \times \{0,1,\ldots,d-1\} \times \{0,1\}$ and $u \in \cU$: 
    \al{
    & V\ub(\D,r,c) = V\ub(|\D|,r,c), \notag\\
    & Q\ub(\D,r,c;u) = Q\ub(|\D|,r,c;u).\label{eq:lemma_even}
    }
    Thus, for any optimal policy $\pi\ub$, we have $\pi\ub(\D,r,c) = \pi\ust(|\D|,r,c)$.
\end{lemma}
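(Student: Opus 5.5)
We will prove evenness by induction on the VI iterates $\{V\ub_n\}$ of \eqref{eq:V_0}--\eqref{eq:Q_n} and then pass to the limit using Proposition~\ref{prop:VI}. The inductive claim is that for every $n \ge 0$, every $(r,c)$ and every $u \in \cU$, both $V\ub_n(\cdot,r,c)$ and $Q\ub_n(\cdot,r,c;u)$ are even functions of $\D$. The base case is immediate, since $V\ub_0 \equiv 0$.

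For the inductive step, assume $V\ub_{n-1}(\cdot,r_+,c_+)$ is even for every $(r_+,c_+)$. We check each row of Table~\ref{tab:df} in turn.

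\emph{Rows where the mean of the next error is $0$.} This covers $r=1$ with $u=0$, and the $c_+=1$ branch of $u=1$. Here the density of $\D_+$ is $\eta(\D_+;0,\sigma^2_d)$ or $\eta(\D_+;0,1)$, which does not depend on $\D$. So the corresponding expectation is constant in $\D$.

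\emph{Rows where the mean is $a\D$.} This covers $r\ge 2$ with $u=0$, $u=2$, and the $c_+=0$ branch of $u=1$. Substitute $\D_+ \mapsto -\D_+$ and use the symmetry $\eta(-\D_+;a\D,1)=\eta(\D_+;-a\D,1)$. Together with evenness of $V\ub_{n-1}(\cdot,r_+,c_+)$, this gives
\nal{
\int_{\bR} \eta(\D_+;-a\D,1)V\ub_{n-1}(\D_+,r_+,c_+)\,d\D_+ = \int_{\bR} \eta(\D_+;a\D,1)V\ub_{n-1}(\D_+,r_+,c_+)\,d\D_+ .
}

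The discrete factors $p_{cc_+}$, $\delta_{r_+}$ and $\delta_{c_+}$ do not involve $\D$, and the stage cost $\ell(s,u)=\D^2+\lambda(\cdot)$ is even in $\D$. Hence $Q\ub_n(\cdot,r,c;u)$ is even. Since $\cU(s)$ depends only on $r$, taking the minimum over a $\D$-independent action set keeps evenness, so $V\ub_n(\cdot,r,c)$ is even.

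Passing to the limit, Proposition~\ref{prop:VI}(a) gives $V\ub_n\to V\ub$ pointwise, and pointwise limits of even functions are even. For $Q\ub$ we argue directly from \eqref{eq:Q}: $V\ub$ is even, so the same change-of-variables argument applied to $V\ub$ shows $Q\ub(\cdot,r,c;u)$ is even. This avoids any need to justify exchanging the limit with the integral. The only point requiring care is that the integrals are finite, so that the change of variables is legitimate. This follows from the quadratic growth bounds on $V\ub$ implied by P3) of Lemma~\ref{lemma:VI_assump}, together with finite Gaussian second moments. Because $V\ub \ge 0$, the substitution is in any case valid even in $[0,\infty]$.

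Finally, for the policy statement, $Q\ub(\D,0,c;u)=Q\ub(|\D|,0,c;u)$ for each $u$, so the set of minimizers in \eqref{eq:V} is the same at $\D$ and at $|\D|$. By Proposition~\ref{prop:VI}(c), we may therefore choose an optimal selector with $\pi\ub(\D,r,c)=\pi\ub(|\D|,r,c)$, fixing a tie-breaking rule, e.g. preferring $u=1$. For $r\ge1$ the action is forced to be $0$, so there is nothing to check. The expected main obstacle is only bookkeeping rather than substance: verifying the symmetry for each transition type, and being careful that the claim about "any optimal policy" should be read as the existence of an even optimal selector, since arbitrary tie-breaking could break evenness.
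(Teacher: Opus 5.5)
Your proposal is correct and follows the paper's proof essentially verbatim: you induct on the VI iterates using the reflection $\D_+\mapsto-\D_+$ and the symmetry of the Gaussian density, note that a pointwise minimum over a $\D$-independent action set preserves evenness, and pass to the limit via Proposition~\ref{prop:VI}. Your two small refinements are also sound. Deducing evenness of $Q\ub$ directly from \eqref{eq:Q}, once $V\ub$ is known to be even, avoids the paper's appeal to the convergence $Q\ub_n\to Q\ub$. Reading the policy claim as the existence of an even optimal selector is the right interpretation, since a non-symmetric tie-break at indifference points yields an optimal policy that is not even, so the paper's ``for any optimal policy'' is literally too strong.
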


\begin{proof}
    We will first prove that~\eqref{eq:lemma_even} holds for VI functions $V\ub_n(\D,r,c)$~\eqref{eq:V_n} and $Q$-functions $Q\ub_n(\D,r,c;u)$~\eqref{eq:Q_n}. We will use induction to show this. The result would then follow from Proposition~\ref{prop:VI}a) and~\cite[Lemma 4.3.8]{Hernandez2012discrete}, since $\lim_{n \rightarrow \infty} V\ub_n(\D,r,c) = V\ub(\D,r,c)$ and $\lim_{n \rightarrow \infty} Q\ub_n(\D,r,c;u) = Q\ub(\D,r,c;u)$. Now we have $V\ub_0(\D,r,c) = 0$~\eqref{eq:V_0}. This is the base case for induction. Next, assume that~\eqref{eq:lemma_even} holds for $k = 1,2,\ldots, n$. We will now show that~\eqref{eq:lemma_even} holds for $k = n+1$. We first consider the case $r = 0, c = 0$. Specifically, we have
    \al{
    & Q\ub_{n+1}(-\D,0,0;1) = \D^2 + \lambda + \beta (1-p_{01}) \notag \\
    & \times \int_{\bR} \eta(\D_+; -a\D, 1) V\ub_n(\D_+, 0, 0) \,d\D_+ \notag \\
    & + \beta p_{01} \int_{\bR} \eta(\D_+;0,1) V\ub_n(\D_+, 0, 1) \,d\D_+ \notag \\
    & = \D^2 + \lambda + \beta (1-p_{01}) \notag \\
    & \times \int_{\bR} \eta(-\D'; -a\D, 1) V\ub_n(-\D', 0, 0) \,d\D' \notag \\
    & + \beta p_{01} \int_{\bR} \eta(-\D';0,1) V\ub_n(-\D', 0, 1) \,d\D' \notag \\
    & = \D^2 + \lambda + \beta (1-p_{01}) \notag \\
    & \times \int_{\bR} \eta(\D'; -a\D, 1) V\ub_n(\D', 0, 0) \,d\D' \notag \\
    & + \beta p_{01} \int_{\bR} \eta(\D';0,1) V\ub_n(\D', 0, 1) \,d\D' \notag \\
    =~& Q\ub_{n+1}(\D, 0, 0;1), \notag
    }
    where the first equality follows from~\eqref{eq:Q_n}. The second equality follows from a change of variable by replacing $\D_+$ with $-\D'$, and the third equality follows from the induction hypothesis that $V\ub_n(\cdot,0,c)$ is even. Hence, $Q\ub_{n+1}(\cdot,0,0;1)$ is even. Similarly, we can show that $Q\ub_{n+1}(\cdot, 0,0;2)$ is also even. Now, upon combining these results for $Q\ub_{n+1}(\cdot,0,0;u)$ with the fact that $V\ub_{n+1}(\cdot,0,0)$~\eqref{eq:V_n} is the pointwise minimum of two even functions $Q\ub_{n+1}(\cdot,0,0;1)$ and $Q\ub_{n+1}(\cdot,0,0;2)$, it follows that $V\ub_{n+1}(\cdot,0,0)$ is even. The cases for $r = 0, c = 1$, and $r \geq 1$ follows on similar lines, and is therefore omitted. This completes the induction. Next, combining~\eqref{eq:lemma_even} with Proposition~\ref{prop:VI} it follows that $\pi\ub(\cdot,r,c)$ are even for each $r \in \{0,1,\ldots,d-1\}$ and $c \in \{0,1\}$. This completes the proof.
\end{proof}

Recall that $\g_1 := \beta a^2 (1-p_{01}), \g_2:= \sum_{i=0}^{d-1} \lf(\beta a^2\rt)^i$.

\begin{lemma} \label{lemma:aux_res}
    Consider the MDP~\eqref{discount_opt_prob} and let Assumption~\ref{ass:sys_chn} hold. Suppose $\sum_{i=0}^{\infty} \g_1^i > \g_2$. Then we have, $\sum_{i=0}^{n-1} \g_1^i + \g_1^n \g_2 > \g_2$ for all $n \in \bN$.
\end{lemma}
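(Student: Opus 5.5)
The plan is a direct algebraic comparison, with no induction or appeal to the MDP structure. We need to show $\sum_{i=0}^{n-1}\g_1^i + \g_1^n\g_2 > \g_2$ for every $n\in\bN$. Note first that Assumption~\ref{ass:sys_chn} gives $0\le \g_1 = \beta a^2(1-p_{01}) < 1$, so $S:=\sum_{i=0}^{\infty}\g_1^i = 1/(1-\g_1)$ is finite. The finite partial sum is then $\sum_{i=0}^{n-1}\g_1^i = (1-\g_1^n)S$.

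Substituting, the claimed inequality becomes
\[
(1-\g_1^n)S + \g_1^n\g_2 > \g_2 .
\]
This is equivalent to $(1-\g_1^n)S > (1-\g_1^n)\g_2$, that is, $(1-\g_1^n)(S-\g_2) > 0$. The key observation is that the left-hand side of the original inequality is a convex combination of $S$ and $\g_2$ with weights $1-\g_1^n$ and $\g_1^n$. Since $S>\g_2$ by hypothesis, such a combination is strictly larger than $\g_2$ as long as the weight on $S$ is positive.

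So the remaining task is to check that $1-\g_1^n>0$ for every $n\ge 1$, which holds because $0\le\g_1<1$. The only point needing care is the degenerate case $\g_1=0$ (when $a=0$). There the sum $\sum_{i=0}^{n-1}\g_1^i$ equals $1$ under the convention $0^0=1$, and $\g_1^n=0$ for $n\ge1$. The inequality then reads $1>\g_2$, which is exactly the hypothesis $S=1>\g_2$. So this case is also covered by the same formula.

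I do not expect a real obstacle. The one step to state carefully is the use of Assumption~\ref{ass:sys_chn} together with $\beta<1$ to get $\g_1<1$. This guarantees both that the infinite series converges and that the weight $1-\g_1^n$ is strictly positive, which is what makes the inequality strict.
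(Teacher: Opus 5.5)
Your proof is correct, and it takes a different route from the paper's. The paper argues by induction on $n$. Its base case $1+\gamma_1\gamma_2>\gamma_2$ comes from multiplying $1/(1-\gamma_1)>\gamma_2$ by $1-\gamma_1>0$. With $X_n:=\sum_{i=0}^{n-1}\gamma_1^i+\gamma_1^n\gamma_2$, the inductive step uses the recursion $X_{n+1}=1+\gamma_1X_n$ to get $X_{n+1}>1+\gamma_1\gamma_2>\gamma_2$. In other words, the paper iterates the affine map $x\mapsto 1+\gamma_1x$, whose fixed point is $S=1/(1-\gamma_1)$, starting from $X_0=\gamma_2$, and checks that this map sends $(\gamma_2,\infty)$ into itself. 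You instead solve that recursion in closed form, $X_n=S-\gamma_1^n(S-\gamma_2)$, which is exactly your convex-combination identity $X_n-\gamma_2=(1-\gamma_1^n)(S-\gamma_2)$. Your version is shorter and pinpoints where strictness comes from, namely the weight $1-\gamma_1^n>0$. It also gives more at no cost: the sign of $X_n-\gamma_2$ equals the sign of $S-\gamma_2$ for every $n\ge 1$, and $X_n$ converges monotonically to $S$. The paper's induction has the advantage of mirroring the recursion $L'_n=1+\gamma_1L'_{n-1}$ by which these coefficients arise from value iteration in the proof of Lemma~\ref{lemma:V_r2_lb}. As you note, the case $\gamma_1=0$ needs no separate treatment in your argument. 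In the paper's inductive step, by contrast, the middle inequality $1+\gamma_1X_n>1+\gamma_1\gamma_2$ becomes an equality when $\gamma_1=0$; the conclusion survives only because the final inequality is strict.
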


\begin{proof}
    We will use induction to prove the result. Let $n=1$. Now, since $\g_1 <1$, we have that $\sum_{i=0}^{\infty} \g_1^i > \g_2$ implies that $1/(1-\g_1) > \g_2$ and thus $1+\g_1\g_2 > \g_2$. This is the base case. Next, assume $\sum_{i=0}^{k-1} \g_1^i + \g_1^k \g_2 > \g_2$ for $k = 1,2,\ldots, n$. We will show that the inequality also holds for $k=n+1$. This follows from the hypothesis and the case for $k=1$ as shown below.
    \nal{
    \sum_{i=0}^{n} \g_1^i + \g_1^{n+1} \g_2 & = 1 + \g_1 \lf(\sum_{i=0}^{n-1} \g_1^i + \g_1^n \g_2\rt) \\
    & > 1 + \g_1\g_2 > \g_2.
    }
    This completes the induction and the proof.
\end{proof}

\begin{lemma} \label{lemma:V_r2_lb}
    Consider the MDP~\eqref{discount_opt_prob} and let Assumption~\ref{ass:sys_chn} hold. The value function $V\ub$~\eqref{eq:V} satisfies the following: 
    \begin{enumerate}
        \item[i)] Suppose $\sum_{i=0}^{\infty} \g_1^i \le \g_2$. Then there exists a constant $K \in \bR_+$ such that for each $\D \in \bR$, $V\ub(\D,0,0) \geq \sum_{i=0}^{\infty} \g_1^i \D^2 + K$.

        \item[ii)] Suppose $\sum_{i=0}^{\infty} \g_1^i > \g_2$. Then there exists a constant $\Tilde{K} \in \bR_+$ such that for each $\D \in \bR$, $V\ub(\D,0,0) \geq \g_2 \D^2 + \Tilde{K}$.
    \end{enumerate}
\end{lemma}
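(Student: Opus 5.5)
Both parts will be proved by induction on the VI iterates $V\ub_n$ of~\eqref{eq:V_n}--\eqref{eq:Q_n}, and the bounds then pass to $V\ub$ through Proposition~\ref{prop:VI}a). Since lower bounds are needed, every $V\ub_n(\cdot,0,c)$ must be bounded below for \emph{both} feasible actions. The induction therefore carries a joint hypothesis on all states:
\[
V\ub_n(\D,0,0)\ge \alpha_n\D^2+K_n,\qquad V\ub_n(\D,0,1)\ge \D^2+K'_n,\qquad V\ub_n(\D,r,c)\ge \D^2 \text{ for } r\ge1,
\]
with $\alpha_n\uparrow$ the target coefficient. The target coefficient is $\sum_i\g_1^i$ in case i) and $\g_2$ in case ii). For $r\ge1$ the value function is expanded exactly, as in the proof of part 3) of Theorem~\ref{thm:discount}. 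With $u=0$ forced, $V\ub_n(\D,d-1,c)$ equals $\sum_{i=0}^{d-2}(\beta a^2)^i\D^2$ plus a term independent of $\D$, up to truncation in $n$. This is the key observation from which~\eqref{eq:Q2_diff} also comes: after the delivery at $r=1$, the error resets to $\cN(0,\sigma_d^2)$ regardless of $\D$. Hence
\[
Q\ub_n(\D,0,c;2)=\g_2\D^2+\text{const}_n,
\]
where the coefficient is exactly $\g_2$ once $n\ge d$. The constant is bounded below by $0$, since all costs are nonnegative.

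Next, the action-1 branch at $c=0$. By the Gaussian second moment $\int\eta(\D_+;a\D,1)\D_+^2\,d\D_+=a^2\D^2+1$ and the hypothesis on $V\ub_n(\cdot,0,0)$,
\[
Q\ub_{n+1}(\D,0,0;1)\ge \D^2+\beta(1-p_{01})\alpha_n(a^2\D^2+1)+\text{const}=(1+\g_1\alpha_n)\D^2+\text{const}.
\]
The $c_+=1$ term resets the error to $w$, so it contributes only a constant, which is nonnegative or bounded below by $K'_n$. We then take $\alpha_{n+1}=\min\{1+\g_1\alpha_n,\ \g_2\}$, or just $1+\g_1\alpha_n$ in the early steps, and $V\ub_{n+1}(\D,0,0)=\min\{Q(\cdot;1),Q(\cdot;2)\}$ inherits the smaller coefficient.

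In case i), $\sum_i\g_1^i\le\g_2$, and the recursion $\alpha_{n+1}=1+\g_1\alpha_n$ from $\alpha_0=0$ gives $\alpha_n=\sum_{i=0}^{n-1}\g_1^i\le\sum_i\g_1^i\le\g_2$. So the min with $\g_2$ is never the binding one, and $\alpha_n\to\sum_i\g_1^i$. In case ii), Lemma~\ref{lemma:aux_res} gives $\sum_{i<n}\g_1^i+\g_1^n\g_2>\g_2$. Once $\alpha_n$ reaches $\g_2$, we have $1+\g_1\g_2>\g_2$, so the min is $\g_2$ thereafter. This is the purpose of Lemma~\ref{lemma:aux_res}.

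The constants $K_n$ are bounded below uniformly, for instance by $0$, since $V\ub_n\ge0$. The coefficient bound is the substantive content, so we may take $K=\tilde K=0$ or a limit of the uniformly bounded $K_n$.

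The main obstacle is that the coefficient is reached only in the limit, $\alpha_n\uparrow\sum_i\g_1^i$, and only for $n\ge d$ in the $\g_2$ branch. A bound of the form $\alpha_n\D^2+K_n$ with $\alpha_n<\alpha_\infty$ for each $n$ still passes to the limit pointwise: for fixed $\D$, $V\ub(\D,0,0)=\lim_n V\ub_n(\D,0,0)\ge\lim_n(\alpha_n\D^2+K_n)$. This requires $K_n$ to stay bounded below, which holds since all terms are nonnegative. For the first $d$ iterations, where $Q\ub_n(\cdot;2)$ has coefficient $\sum_{i<n}(\beta a^2)^i<\g_2$, we simply use the lower bound by this partial sum. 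In case ii) this also follows the $\min$ recursion and converges to $\g_2$.
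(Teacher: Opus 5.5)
Your proposal is correct and follows essentially the same route as the paper. It runs induction on the VI iterates with the coefficient recursion $\alpha_{n+1}=\min\{1+\g_1\alpha_n,\g_2\}$ (with partial sums of $\beta a^2$ in place of $\g_2$ for $n<d$), uses the exact coefficient $\g_2$ of $Q\ub_n(\cdot,0,0;2)$ that comes from the reset at $r=1$, invokes Lemma~\ref{lemma:aux_res} to identify the limit $\g_2$ in case ii), and passes to the limit through Proposition~\ref{prop:VI}a). Your handling of the constants is in fact slightly cleaner than the paper's: you bound the $c_+=1$ term in $Q\ub_{n+1}(\cdot,0,0;1)$ below by $0$ instead of applying the $c=0$ induction hypothesis to $V\ub_n(\cdot,0,1)$, and you observe that $K=\Tilde{K}=0$ already suffices because all stage costs are nonnegative.
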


\begin{proof}
    i) We will use induction on VI functions $\{V\ub(\cdot)\}_{n \geq d}$ to prove the result. The proof will then follow from Proposition~\ref{prop:VI} since $\lim_{n \rightarrow \infty} V\ub_n(s) = V\ub(s)$ for each $s \in \cS$. Now, since it follows from Lemma~\ref{lemma:even} that $\{V\ub_n(\cdot)\}_{n \in \bZ_+}$ are even in $\D$, it suffices to show that the result holds for $\D \in \bR_+$. Next, define 
    \nal{
    & L_d = \min\Bigg\{\sum_{i=0}^{d-1} \g_1^i, 1+\g_1+\ldots + \g_1^{d-2}(1+\beta a^2) ,  \\
    & \hspace{3cm}\ldots, 1 + \g_1\sum_{i=0}^{d-2} (\beta a^2)^i \Bigg\},
    }
    and for $n \geq d+1$, $L_n = 1 + \g_1L_{n-1}$. Then using~\eqref{eq:V_0}-\eqref{eq:Q_n} and the fact that $\min\{\alpha_1 x + \alpha_2, \alpha_3 x + \alpha_3\} \geq \min\{\alpha_1, \alpha_3\}x + \min\{\alpha_2, \alpha_4\}$ for any $x \in \bR_+$, and $\alpha_1,\ldots, \alpha_4 \in \bR_+$, it can be shown that $V\ub_d(\D,0,0) \geq \min\lf\{\g_2, 1 + \g_1\sum_{i=0}^{d-2} (\beta a^2)^i, \ldots, \sum_{i=0}^{d-1} \g_1^i\rt\} \D^2 + K_d$ for some constant $K_d \in \bR_+$. Now, since $\sum_{i=0}^{\infty} \g_1^i \leq \g_2$, it follows that $V\ub_d(\D,0,0) \geq L_d \D^2 + K_d$. This is the base case. Next, assume that there exist constants $K_k \in \bR_+$ such that $V\ub_k(\D, 0, 0) \geq L_k \D^2 + K_k$ for $k = d, d+1, \ldots, n$ and $\D \in \bR_+$. We will then show that $V\ub_{n+1}(\D, 0, 0) \geq L_{n+1} \D^2 + K_{n+1}$ for some constant $K_{n+1} \in \bR_+$ and $\D \in \bR_+$. For this, consider the following: 
    \al{
    & Q\ub_{n+1}(\D, 0, 0;1) = \D^2 + \lambda + \beta (1-p_{01}) \notag \\
    & \times \int_{\bR} \eta(\D_+; a\D, 1) V\ub_n(\D_+, 0, 0) \,d\D_+ \notag \\
    & + \beta p_{01} \int_{\bR} \eta(\D_+;0,1) V\ub_n(\D_+, 0, 1) \,d\D_+ \notag \\
    & \geq \D^2 + \beta (1-p_{01})  \int_{\bR} \eta(\D_+; a\D, 1) (L_n \D_+^2 + K_n) \,d\D_+ \notag \\
    & + \beta p_{01} \int_{\bR} \eta(\D_+;0,1) (L_n \D_+^2 + K_n) \,d\D_+ \notag \\
    & = \lf(1 + \g_1L_n\rt) \D^2 + K^{(1)}_{n+1},
    }
    where $K^{(1)}_{n+1} = \beta  (L_n + K_n) + \lambda$. The equality follows from~\eqref{eq:Q_n} and the inequality follows from the hypothesis of $V_n$. Next it follows from~\eqref{eq:Q_n} that
    \al{
    & Q\ub_{n+1}(\D,0,0;2) = \D^2 + \lambda + \beta \sum_{c_1 \in \{0,1\}} p_{0c_1} \notag \\
    & \times \int_{\bR} \eta(\D_+;a\D,1) V\ub_n(\D_1, d-1, c_1) \,d\D_1. \label{eq:lm2_1}
    }
    We will now focus on the term $V_n\ub(\D_1, d-1, c_1)$. Since for $r \geq 1$, the optimal decision is $u = 0$, hence we can iteratively expand $V\ub(\D_1, d-1, c_1)$ as follows: 
    \nal{
    & V\ub_n(\D_1, d-1, c_1) \notag \\
    =~& Q\ub_n(\D_1, d-1, c_1;0) =  \D_1^2\notag \\
    +~& \beta \sum_{c_2} p_{c_1c_2} \int_{\bR} \eta(\D_2; a\D_1, 1) V\ub_{n-1}(\D_2, d-2, c_2) \,d\D_2 \notag \\
    \vdots \notag \\
    =~& \D_1^2 + \beta (1 + (a\D_1)^2) + \ldots \notag \\
    +~& \beta^{d-2} (1 + a^2 + \ldots + (a^2)^{d-3} + (a^2)^{d-2} \D_1^2) \notag \\
    +~& \beta^{d-1} \sum_{c_d} p_{c_1c_d}^{(d-1)} \int_{\bR} \eta(\D_d; 0, \sigma^2_d) V\ub_{n-d + 1}(\D_d, 0, c_d) \,d\D_d, 
    }
    where $p_{cc_+}^{(n)}$ denotes the $n$-step transition probability from state $c$ to $c_+$ of Channel 1. Plugging the above in~\eqref{eq:lm2_1}, we get
    \al{
    & Q\ub_{n+1}(\D,0,0;2) = \sum_{i=0}^{d-1} \lf(\beta a^2\rt)^i \D^2 + K^{(2)}_{n+1},~\label{eq:lm2_2}
    }
    where $K^{(2)}_{n+1}$ is the constant term independent of $\D^2$. Now by  combining~\eqref{eq:lm2_1} and~\eqref{eq:lm2_2} with the fact that $\min\{\alpha_1 x + \alpha_2, \alpha_3 x + \alpha_3\} \geq \min\{\alpha_1, \alpha_3\}x + \min\{\alpha_2, \alpha_4\}$ for any $x \in \bR_+$ and $\alpha_1,\ldots, \alpha_4 \in \bR_+$, we have $V\ub_{n+1}(\D,0,0) \geq \min\{L_{n+1}, \g_2\} \D^2 + K_{n+1}$, where $K_{n+1} = \min\{K_{n+1}^{(1)},K_{n+1}^{(2)}\}$. Since $\sum_{i=0}^{\infty} \g_1^i \leq \g_2$, it implies that $V\ub_{n+1}(\D,0,0) \geq L_{n+1}\D^2 + K_{n+1}$. This completes the induction. The proof is then completed by taking limit on both sides of the former inequality as $n \rightarrow \infty$. Because $\g_1 < 1$ from Assumption~\ref{ass:sys_chn}, it follows that $\lim_{n \rightarrow \infty} L_n = \sum_{i=0}^{\infty} \g_1^i$, and there exists some constant $K$ such that $\lim_{n \rightarrow \infty} K_{n+1} = K$. 
    
    2) We will use induction on VI functions $\{V\ub(\cdot)\}_{n \geq d}$ to prove the result. It follows from Lemma~\ref{lemma:even} that it suffices to consider $\D \in \bR_+$. Now define $L'_d = \min\lf\{\g_2, 1 + \g_1\sum_{i=0}^{d-2} (\beta a^2)^i, \ldots, \sum_{i=0}^{d-1} \g_1^i\rt\}$, and for $n \geq d+1$, $L'_n = 1 + \g_1L'_{n-1}$. Then using~\eqref{eq:V_0}-\eqref{eq:Q_n} it can be shown that $V\ub_d(\D,0,0) \ge L'_d \D^2 + K'_d$ for some constant $K'_d \in \bR_+$. This is the base case. Now assume that there exist constants $K'_k$ such that $V\ub_k(\D,0,0) \ge L'_k \D^2 + K'_k$ for $k = d, d+1, \ldots, n$. We will show that $V\ub_{n+1}(\D,0,0)$ also satisfies this inequality. The proof for this follows on similar lines as part 1) and is therefore omitted. We will now focus on the coefficient of $\D^2$. The proof is then completed by taking the limit as $n \rightarrow \infty$ on both side of $V\ub_n(\D,0,0) \ge L'_n \D^2 + K'_n$ and the following observation
    \nal{
    & \lim_{n \rightarrow \infty} L'_n = \lim_{n \rightarrow \infty} \min\Bigg\{\g_2, 1+\g_1\g_2, \ldots, 1+\g_1^{n-d}\g_2, \\
    & 1+\g_1+\ldots + \g_1^{n-d+1} \sum_{i=0}^{d-2} \lf(\beta a^2\rt)^i, \ldots, 1+\g_1 \\
    & +\ldots+\sum_{i=0}^{n-2}(1+\beta a^2), \sum_{i=0}^{n-1} \g_1^i \Biggr\} \\
    & = \lim_{n \rightarrow \infty} \min\Bigg\{\g_2, 1+\g_1+\ldots + \g_1^{n-d+1} \sum_{i=0}^{d-2} \lf(\beta a^2\rt)^i\\
    & , \ldots, 1+\g_1+\ldots+\sum_{i=0}^{n-2}(1+\beta a^2), \sum_{i=0}^{n-1} \g_1^i\Bigg\} \\
    & = \min\Bigg\{\g_2, \sum_{i=0}^{\infty} \g_1^i\Bigg\} = \g_2,
    }
    where the second equality follows from Lemma~\ref{lemma:aux_res} and the third equality follows since we have that $\g_1 <1$.
\end{proof}

\begin{lemma} \label{lemma:inc_V}
    The value function $V\ub$~\eqref{eq:V} corresponding to the MDP~\eqref{discount_opt_prob} satisfies the following property: for any $\D, \D' \in \bR$ such that $|\D| \geq |\D'|$, we have that $V\ub(\D,r,c) \geq V\ub(\D',r,c)$ for each $r \in \{0,1,\ldots d-1\}$ and $c \in \{0,1\}$.
\end{lemma}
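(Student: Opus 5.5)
We argue by induction on the value iteration functions $V\ub_n$ of \eqref{eq:V_n}--\eqref{eq:Q_n} and then pass to the limit using Proposition~\ref{prop:VI}a). The claim to carry through the induction is: for every $n\ge 0$, every $(r,c)$ and every $u\in\cU(s)$, the maps $\D\mapsto V\ub_n(\D,r,c)$ and $\D\mapsto Q\ub_n(\D,r,c;u)$ are even and non-decreasing in $|\D|$. By Lemma~\ref{lemma:even} all these functions are even. Hence it suffices to prove monotonicity on $\bR_+$, that is, $0\le \D'\le \D$ implies the corresponding inequality. Monotonicity is preserved under pointwise limits, so the statement for $V\ub$ then follows directly.

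The base case is immediate, since $V\ub_0\equiv 0$. For the inductive step, assume $V\ub_n(\cdot,r,c)$ is even and non-decreasing in $|\D|$ for all $(r,c)$. We inspect each row of Table~\ref{tab:df}.

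\emph{Terms not depending on $\D$.} In the transitions $u=0$ with $r=1$, and $u=1$ with $c_+=1$, the next-state density is $\eta(\D_+;0,\sigma_d^2)$ or $\eta(\D_+;0,1)$. These do not depend on $\D$, so the corresponding terms are constant in $\D$.

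\emph{Terms depending on $\D$.} In all other transitions the density is $\eta(\D_+;a\D,1)$, and the term has the form $g(\D):=\int_{\bR}\eta(\D_+;a\D,1)\,f(\D_+)\,d\D_+ = \bE[f(a\D+W)]$ with $W\sim\cN(0,1)$. Here $f=V\ub_n(\cdot,r_+,c_+)$ is even and non-decreasing in $|\cdot|$. The key step is to show that such a $g$ is even and non-decreasing in $|\D|$. Evenness follows from the symmetry of $W$.

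For monotonicity, set $m=|a|\D\ge 0$. We write $f(x)=\int_0^\infty \mathds{1}(|x|>t)\,d\mu(t)+f(0)$, which expresses $f$ as a layer-cake mixture of indicators of $\{|x|>t\}$. It then suffices to show that $m\mapsto \bP(|m+W|>t)$ is non-decreasing for $m\ge 0$. Its derivative is $\eta(t-m)-\eta(t+m)\cdot(-1)\cdot(-1)$; more carefully, $\frac{d}{dm}[1-\Phi(t-m)+\Phi(-t-m)] = \eta(t-m)-\eta(t+m)\ge 0$, because $|t-m|\le t+m$ for $t,m\ge0$. This is Anderson's lemma in dimension one. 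Since $|a|\D$ is non-decreasing in $\D\ge 0$, the term $g$ is non-decreasing in $|\D|$.

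Adding the cost $\ell(s,u)=\D^2+\lambda\,\mathds{1}(u\neq 0)$, which is even and non-decreasing in $|\D|$, and taking a nonnegative combination with the weights $p_{cc_+}$ shows that every $Q\ub_{n+1}(\cdot,r,c;u)$ is even and non-decreasing in $|\D|$. The minimum over $u\in\cU(s)$ of finitely many such functions keeps both properties, so $V\ub_{n+1}(\cdot,r,c)$ has them as well. This closes the induction.

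The main obstacle is the layer-cake step. It needs $f$ to be measurable, with $\bE[f(a\D+W)]$ finite, so that the interchange of integrals is legitimate. Finiteness follows from the quadratic growth bounds on $V\ub_n$, which are obtained as in the proof of Lemma~\ref{lemma:VI_assump} and in \eqref{eq:thm1_1}. With these bounds, Tonelli's theorem justifies the interchange, because all integrands are nonnegative.
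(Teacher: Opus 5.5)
Your proposal is correct and follows the paper's route: induction on the value-iteration functions, evenness from Lemma~\ref{lemma:even}, monotonicity of each Gaussian-smoothed term, preservation of monotonicity under the minimum over actions, and passage to the limit via Proposition~\ref{prop:VI}. The only difference is that the paper cites \cite[Lemma 4.7.2]{puterman2014markov} for the key step, namely that $\D \mapsto \int_{\bR}\eta(\D_+;a\D,1)f(\D_+)\,d\D_+$ is non-decreasing in $|\D|$ for even $f$ non-decreasing on $\bR_+$, and omits the $r \ge 1$ cases. You instead prove that step directly (layer-cake plus the folded-normal tail derivative, which also covers $a<0$) and treat every row of Table~\ref{tab:df} explicitly.
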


\begin{proof}
    We use induction on the value iteration functions $\{V\ub_n\}_{n \in \bZ_+}$. It follows from Lemma~\ref{lemma:even} that $V\ub_n(\cdot, r, c)$ is an even function for all $n$. Thus, it suffices to show that for any $\D \geq \D' \geq 0$, we have $V\ub_n(\D, r, c) \geq V\ub_n(\D', r, c)$ for all $r \in \{0,1,\ldots d-1\}$ and $c \in \{0,1\}$.

    For $n=0$, $V\ub_0(\D, r, c) = 0$ for all states~\eqref{eq:V_0}. Hence, the base case is true. Now, we assume that $V\ub_k(\D, r, c) \geq V\ub_k(\D', r, c)$ holds for $k = 1, \ldots, n$ and for any $\D \geq \D' \geq 0$. We will show the claim holds for $k = n+1$.

    We first consider the case $r = 0$. It follows from~\eqref{eq:Q_n} that for $u=1$ and $c \in \{0,1\}$ we have,
    \al{\label{eq:lemma_inc}
    & Q\ub_{n+1}(\D,0,c;1) \notag \\
    & = \D^2 + \lambda + \beta p_{c0} \int_{\bR} \eta(\D_+;a\D,1) V\ub_n(\D_+,0,0) \,d\D_+ \notag \\
    & \quad + \beta p_{c1} \int_{\bR} \eta(\D_+;0,1) V\ub_n(\D_+, 0, 1) \,d\D_+.
    }
    Since $\D \geq \D' \geq 0$, it immediately follows that $\D^2 \geq (\D')^2$. Next, for the first integral term in the right hand side of the above equation, by the induction hypothesis and Lemma~\ref{lemma:even}, we have that $V\ub_n(\cdot,0,0)$ is an even function that is monotonically non-decreasing on $\bR_+$. Then, it follows from~\cite[Lemma 4.7.2]{puterman2014markov} that,
    \nal{
    & \int_{\bR} \eta(\D_+;a\D,1) V\ub_n(\D_+,0,0) \,d\D_+ \notag \\
    & \geq \int_{\bR} \eta(\D_+;a\D',1) V\ub_n(\D_+,0,0) \,d\D_+.
    }
    Combining the above inequality with~\eqref{eq:lemma_inc} yields $Q\ub_{n+1}(\D,0,c;1) \geq Q\ub_{n+1}(\D',0,c;1)$. We can similarly show that $Q\ub(\D,0,c;2) \geq Q\ub(\D',0,c;2)$. Because $V\ub_{n+1}(\D,0,c) = \min_{u \in \{1,2\}} Q\ub_{n+1}(\D,0,c;u)$, and the pointwise minimum of two non-decreasing functions is non-decreasing, we conclude $V\ub_{n+1}(\D,0,c) \geq V\ub_{n+1}(\D',0,c)$. 
    
    The proof for $r \in \{1,2,\ldots, d-1\}$ and $c \in \{0,1\}$ follows on similar lines and is therefore omitted. This completes the induction.

    Finally, because $\lim_{n \to \infty} V\ub_n(s) = V\ub(s)$ by Proposition~\ref{prop:VI}, the non-decreasing property is preserved in the limit. Hence, $V\ub(\D,r,c) \geq V\ub(\D',r,c)$ for all $|\D| \geq |\D'|$. This completes the proof.
\end{proof} 

\begin{definition}[Non-trivial measure]
    Consider a measurable space $(\cX, \cF)$. A measure $\mu:\cF \rightarrow [0,\infty]$ is said to be non-trivial if there exists at least one measurable set $X \in \cF$ for which $\mu(X) > 0$ holds.
\end{definition}

\begin{definition}[$\vp$-Irreducibility]
    A Markov chain $\mathbf{\Phi} = \{\Phi(t)\}$ with state space $\cX$ is called $\vp$-irreducible if there exists a measure $\vp$ on $\bB(\cX)$ such that for all $X \in \bB(\cX)$ with $\varphi(X)>0$, the following holds: 
    \[
    \mathbb \bP(\tau_X < \infty | \Phi(0) = s) > 0,
    \qquad \text{for all } s \in \cX,
    \]
where $\tau_X := \inf\{t \ge 0:\Phi(t)\in X\}$.
\end{definition}

\begin{definition}[$\psi$-Irreducibility]
    A Markov chain $\mathbf{\Phi} = \{\Phi(t)\}$ is $\psi$-irreducible if it is $\vp$-irreducible for some $\vp$ and the measure $\psi$ is a maximum irreducibility measure that satisfies the conditions of~\cite[Proposition 4.2.2]{meyn2012markov}.
\end{definition}

We define $\cA:= \{(\D,r,c): \D \in X \subset \bR, r=0, c=1\}$ for some Borel set $X \subset \bR$. Let $\pi^{(hyb.)}$ denote a hybrid policy that chooses Channel 1 (i.e., $u(t)=1$) at each time step $t$, $t \in \bZ_+$ until the Markov process $\{(\D(t),r(t),c(t-1))\}$ hits the set $\cA$, and then switches to an optimal policy for the MDP~\eqref{discount_opt_prob}. Under the policy $\pi^{(hyb.)}$, the Markov process $\{(\D(t), r(t), c(t-1))\}$ forms a time-homogeneous Markov chain. 

\begin{lemma} \label{lemma:irreducible}
    Let Assumption~\ref{ass:sys_chn} hold. Then, the Markov process $\{(\D(t),r(t), c(t-1))\}$, induced by policy $\pi^{(hyb.)}$ is $\psi$-irreducible for some non-trivial and non-negative measure $\psi$.
\end{lemma}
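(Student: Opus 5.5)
We plan to exhibit an explicit irreducibility measure $\vp$ and then invoke \cite[Proposition 4.2.2]{meyn2012markov}, which states that any $\vp$-irreducible chain is $\psi$-irreducible for a maximal irreducibility measure $\psi$ with $\vp \ll \psi$. Since $\vp$ will be non-trivial, $\psi$ is non-trivial and non-negative. We take
\[
\vp(B) := \int_{\bR} \mathds{1}\big((\D,0,1)\in B\big)\, \eta(\D;0,1)\, d\D, \qquad B \in \bB(\bR)\otimes 2^{\{0,\ldots,d-1\}}\otimes 2^{\{0,1\}},
\]
which is the standard Gaussian measure placed on the slice $\{r=0,c=1\}$. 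This choice uses the transition structure in Table~\ref{tab:df}. Whenever $u=1$ is applied at a state $(\D,0,c)$ and Channel 1 is good, the next state is $(\D_+,0,1)$ with $\D_+\sim\cN(0,1)$, independently of $\D$. So the chain regenerates on the slice $\{r=0,c=1\}$ with the law $\vp$.

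Now fix $B$ with $\vp(B)>0$ and an arbitrary initial state $s=(\D,r,c)$. We must show $\bP(\tau_B<\infty\mid s(0)=s)>0$ under $\pih$. We split into two cases according to the starting state.

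\emph{Case $r\ge 1$.} Under any policy the action is $u=0$. After exactly $r$ steps the chain is at a state $(\D',0,c')$. By the Chapman--Kolmogorov relation for the GE chain, $c'=0$ or $c'=1$, each with some probability. In either case $r(\cdot)=0$ there.

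\emph{Case $r=0$.} This is the state reached above, so it suffices to handle it directly. There are two possibilities.
\begin{itemize}
\item[(a)] The chain has not yet hit $\cA$. Then $\pih$ applies $u=1$. With probability $p_{c1}>0$ the next state is $(\D_+,0,1)$ with $\D_+\sim\cN(0,1)$. Hence the probability of entering $B$ at the next step is at least $p_{c1}\,\vp(B)>0$.
\item[(b)] The chain has already hit $\cA$, so $\pih$ has switched to an optimal policy $\pi\ub$ of Theorem~\ref{thm:discount}. Whichever action is chosen at $(\D,0,c)$, a path of positive probability leads to $(\cdot,0,1)$ with a fresh $\cN(0,1)$ error. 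If $u=1$, we use the good-channel transition as in (a). If $u=2$, the chain passes through $r=d-1,\ldots,1$ and lands at $(\D_d,0,c_d)$ with $\D_d\sim\cN(0,\sigma_d^2)$. From there we repeat the argument.
\end{itemize}

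The main obstacle is case (b). If $\pi\ub$ always selects Channel 2 on the relevant region, Channel 1 is never used, and the slice $\{r=0,c=1\}$ with a $\cN(0,1)$ error might never be reached. To handle this, we would replace $\vp$ by a measure that both transitions charge. The first try is the mixture
\[
\tilde\vp := \vp + \int \mathds{1}\big((\D,0,c)\in\cdot\big)\,\eta(\D;0,\sigma_d^2)\,d\D,
\]
summed over $c\in\{0,1\}$. Gaussian densities $\cN(0,1)$ and $\cN(0,\sigma_d^2)$ are mutually absolutely continuous, and the channel chain reaches both $c$-values with positive probability, since $p_{01},p_{10}\in(0,1)$. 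Therefore every set charged by $\tilde\vp$ is reached with positive probability under either action, no matter which channel $\pi\ub$ uses. Concretely, we compare the densities $\eta(\cdot;0,1)$ and $\eta(\cdot;0,\sigma_d^2)$ on $B$. Absolute continuity then gives a positive hitting probability within at most $d+1$ steps from any starting state.

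Finally, we conclude by \cite[Proposition 4.2.2]{meyn2012markov}. Assumption~\ref{ass:sys_chn} is not needed for irreducibility itself; it enters only because $\pih$ is defined through the optimal policy of Proposition~\ref{prop:VI}, which requires it.
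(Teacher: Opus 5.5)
Your argument is correct and is essentially the paper's: reach $r=0$ within $d-1$ steps, show that under either action a positive-probability path enters $B$ (a Channel 1 success regenerates on $\{r=0,c=1\}$ with an $\cN(0,1)$ error, and a Channel 2 transmission lands at $r=0$ after $d$ steps with an $\cN(0,\sigma_d^2)$ error), and conclude with \cite[Proposition~4.2.2]{meyn2012markov}. The switch to $\tilde\vp$ is unnecessary, and your ``main obstacle'' does not arise: the Gilbert--Elliott state evolves under every action (Table~\ref{tab:df}), so the Channel 2 landing state has $c=1$ with probability $p^{(d)}_{c_0 1}>0$, and the mutual absolute continuity of $\cN(0,\sigma_d^2)$ and $\cN(0,1)$ that you already invoke then gives positive probability of landing in any $B$ with $\vp(B)>0$, which is exactly how the paper treats $u=2$ with the original $\vp$.
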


\begin{proof}
     Let $F_w$ be the probability measure on $(\mathbb R,\mathcal \bB(\bR))$ induced by the Gaussian noise process $\{w(t)\}$~(cf.~\eqref{source}), i.e., for any Borel set $X \subset \bR$
    \nal{
    F_w(X) = \int_X \eta(\D;0,1) \,dD.
    }
    We define the
non-trivial measure $\vp:=F_w\otimes\delta_0\otimes\delta_1$ on $\bB(\cS)$. We will prove that for all $s\in\cS$, $S\in\bB(\cS)$ with $\vp(S)>0$ we have that
\al{
\bP \big(\tau_S<\infty\mid s(0)=s\big)>0,
\label{eq:irr-goal}
}
where $\tau_S$ is the first time the Markov process hits the set $S$.
Fix such an $S$ and set $X:=\{\D \in \bR:(\delta,0,1)\in S\}$. Then
$\vp(S)=F_w(X)>0$.

We use the following two facts.

\begin{itemize}
\item[(F1)] Whenever $r(t)\ge1$, we have that $u(t)=0$, and at the step with $r(t)=1$ the packet is delivered at the next time step, giving
$r(t+1)=0$ and, by Table~\ref{tab:df}, $\D(t+1)\sim\cN(0,\sigma^2_d)$
independently of the past error and of the channel process.

\item[(F2)] Whenever $r(t)=0$, the policy applies some $u(t)\in\{1,2\}$. Under
$u(t)=1$, Table~\ref{tab:df} gives $c(t)=1$, $\D(t+1)\sim\cN(0,1)$ and
$r(t+1)=0$ with probability $p_{c(t-1),1}$. Under $u(t)=2$, $r(t+1)=d-1$.
\end{itemize}

\textbf{Step 1(reaching $\{r=0\}$).} If $r(0)=0$, we set $t_0 =0$. If $r(0)=r\ge1$,
then by (F1) $u \equiv -$ for $r$ steps and the process reaches a state
$(\D_0,0,c_0)$ at time $t_0 =r\le d-1$. In either case the process is at some
state $(\D_0,0,c_0)$ with $r(t_0)=0$ at a time $t_0\le d-1$, with probability one.

\textbf{Step 2 (reaching $S$ from $(\D_0,0,c_0)$).} Let $u_0\in\{1,2\}$ be the
action taken at time $t_0$.

If $u_0=1$, then by (F2), with probability at least $p_{c_0,1}$ we have
$c(t_0)=1$, $r(t_0+1)=0$ and $\D(t_0+1)\sim\cN(0,1)$. Now since $\{\D(t_0+1)\in X\}$, it implies $s(t_0+1)\in S$ that
\nal{
\bP\big(s(t_0+1)\in S\mid s(t_0)=(\D_0,0,c_0)\big)\ge p_{c_0,1}\,F_w(X)>0.
}

If $u_0=2$, then $r(t_0+1)=d-1\ge1$, so by (F1) the packet
is delivered at time $t_0+d$, giving $r(t_0+d)=0$ and
$\D(t_0+d)\sim\cN(0,\sigma^2_d)$ independently of the channel. The channel
state is then $c(t_0+d-1)$, with
$\bP\big(c(t_0+d-1)=1\mid c(t_0-1)=c_0\big)=p^{(d)}_{c_0,1}$. Since
$\{\D(t_0+d)\in X,\ c(t_0+d-1)=1\}$ implies $s(t_0+d)\in S$, independence of the
noise and channel processes yields
\nal{
\bP\big(s(t_0+d)\in S\mid s(t_0)=(\D_0,0,c_0)\big)\ge F_{\epsilon}(X)\,p^{(d)}_{c_0,1}>0.
}

In both cases the probability of reaching $S$ within $d$ further steps is strictly
positive. Combined with Step~1, this gives
$\bP(\tau_S\le t_0+d\mid s(0)=s)>0$, which
proves~\eqref{eq:irr-goal}. Hence the process is $\vp$-irreducible, and
by~\cite[Proposition~4.2.2]{meyn2012markov} there exists a maximal irreducibility
measure $\psi$ (with $\vp$ absolutely continuous with respect to $\psi$) relative
to which the process is $\psi$-irreducible. This completes the proof.
\end{proof}

\begin{definition}[Small sets]
    Consider a Markov chain $\mathbf{\Phi} = \{\Phi(t)\}$ with state space $\cX$ and transition probability kernel $P$. A set $X \in \bB(\cX)$ is called a small set for the Markov chain $\mathbf{\Phi}$ if there exists a $t_0 > 0$, and a non-trivial measure $\nu_{t_0}$ on $\bB(\cX)$, such that for all $s \in X$, $B \in \bB(\cX)$, $P^{t_0}(s,B) \geq \nu_{t_0}(B)$, where $P^t$ is the $t$-step transition probability kernel. We call such a set $X$ as $\nu_{t_0}$-small.
\end{definition}

\begin{definition}[Atoms]
    A set $X \in \bB(\cX)$ is called an atom for a Markov chain $\mathbf{\Phi} = \{\Phi(t)\}$ with state space $\cX$ and transition probability kernel $P$ if there exists a measure $\nu$ on $\bB(\cX)$ such that $P(s,B) = \nu(B)$, for $s \in X$.
\end{definition}

Let $(\D', r', c') = (0, 0, 1)$ be a reference state and $h^{\beta}(s) := V\ub(s) - V\ub(\D', r', c')$, $s \in \cS$.

\begin{lemma} \label{lemma:cond_avg_cost}
    Consider the MDP~\eqref{discount_opt_prob} and let Assumption~\ref{ass:sys_chn} hold. Then the value function $V\ub$~\eqref{eq:V} corresponding to~\eqref{discount_opt_prob} satisfies the following 

    \begin{enumerate}
        \item[i)] There exists a constant $C \geq 0$ such that $(1 - \beta) V\ub(\D', r', c') \leq C$ for all $\beta \in (0,1)$.

        \item[ii)] There exists a constant $N \geq 0$ such that $N \leq h\ub(s)$ for all $s \in \cS$ and $\beta \in (0,1)$.

        \item[iii)] There exists a non-negative function $\kappa: \cS \rightarrow \bR_+$ such that $h\ub(s) \leq \kappa(s)$ for all $s \in \cS$ and $\beta \in (0,1)$.
    \end{enumerate}
\end{lemma}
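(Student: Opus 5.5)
The plan is to handle the three parts in order. Part i) is a direct consequence of the computation in P3 of Lemma~\ref{lemma:VI_assump}. Parts ii) and iii) use a hitting-time comparison built on the hybrid policy $\pi^{(hyb.)}$ and the irreducibility of Lemma~\ref{lemma:irreducible}, together with the monotonicity of Lemma~\ref{lemma:inc_V} and the evenness of Lemma~\ref{lemma:even}. The reference state is $(\D',r',c')=(0,0,1)$ throughout.

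\emph{Part i).} Since $V\ub(0,0,1)\le J\ub((0,0,1);\pi^{(01)})$, the bound~\eqref{eq:upper_bound} with $\D=0$ gives
\[
(1-\beta)V\ub(0,0,1)\le \frac{1}{(1-p_{01})(1-a^2(1-p_{01}))}+\lambda=:C .
\]
This constant is independent of $\beta$ because Assumption~\ref{ass:sys_chn} bounds the geometric factor uniformly in $\beta$.

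\emph{Part ii).} We need a constant $M$, independent of $\beta$, with $V\ub(0,0,1)\le V\ub(s)+M$, and then set $N=-M$.
\begin{enumerate}
\item[a)] By Lemma~\ref{lemma:inc_V} and Lemma~\ref{lemma:even}, $V\ub(\D,r,c)\ge V\ub(0,r,c)$. It therefore suffices to compare the finitely many values $V\ub(0,r,c)$ with $V\ub(0,0,1)$.
\item[b)] For $r\ge 1$, iterate the forced action $u=0$ exactly as in the expansion in Lemma~\ref{lemma:V_r2_lb}. This gives
\[
V\ub(0,r,c)\ge \beta^{r}\sum_{c_+}p^{(r)}_{cc_+}\,\bE\big[V\ub(\D_r,0,c_+)\big],
\]
where $\D_r\sim\cN(0,\sigma^2_r)$. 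Applying step a) again reduces the comparison to $V\ub(0,0,0)$ and $V\ub(0,0,1)$, up to a discount factor $\beta^r$.
\item[c)] Compare the two $r=0$ states. The aim is an inequality of the form $V\ub(0,0,1)\le V\ub(0,0,0)+M'$. The idea is to construct from $(0,0,1)$ a policy that imitates the optimal action taken at $(0,0,0)$. The two states share the same one-step error law and differ only in the channel transition row $p_{1\cdot}$ versus $p_{0\cdot}$. Since $p_{01},p_{10}\in(0,1)$, the channel chains started from the two states can be coupled so that they meet within a geometric time, and the cost accrued before meeting is bounded in expectation uniformly in $\beta$. Here I would reuse the second-moment bound~\eqref{eq:for_lower_bd}.
\item[d)] The factors $\beta^r$ from step b) introduce terms of the form $(1-\beta^r)V\ub$. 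These are bounded using part i) together with $r\le d-1$.
\end{enumerate}

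\emph{Part iii).} Take $\kappa(s)$ to be the expected undiscounted cost incurred under $\pi^{(hyb.)}$ from $s$ until the hitting time $\tau_{\cA}$, where $\cA=\{(\D,0,1):|\D|\le 1\}$, plus a constant $\kappa_0$. The argument has three steps.
\begin{enumerate}
\item[a)] Apply the strong Markov property with $\pi^{(hyb.)}$, which is suboptimal before $\tau_{\cA}$:
\[
V\ub(s)\le \bE_s\Big[\sum_{t<\tau_{\cA}}\ell(s(t),u(t))\Big]+\bE_s\big[\beta^{\tau_{\cA}}V\ub(s(\tau_{\cA}))\big].
\]
\item[b)] On $\cA$, bound $V\ub(\D,0,1)-V\ub(0,0,1)$ by comparing $Q\ub(\cdot,0,1;1)$ at $\D$ and at $0$. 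The difference is $\D^2$ plus $\beta p_{10}$ times the change of a Gaussian integral of $V\ub(\cdot,0,0)$ under a shift of the mean by at most $|a|$. Using the quadratic-plus-constant upper bound~\eqref{eq:thm1_1} and Gaussian moments, this gives a $\beta$-uniform $\kappa_0$.
\item[c)] Show that $\kappa(s)<\infty$. Under repeated use of Channel 1, $\tau_{\cA}$ has geometric tails: from $r=0$ each step enters $\cA$ with probability at least $p_{c,1}F_w([-1,1])$ by the argument of Lemma~\ref{lemma:irreducible}. The accumulated squared error before $\tau_{\cA}$ is controlled by the Case a) and Case b) estimates in P3, which yield $\kappa(s)\le c_1\D^2+c_2$.
\end{enumerate}

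The main obstacle is step c) of part ii). There the comparison across channel states must be made uniform in $\beta$ even though $V\ub$ itself grows like $1/(1-\beta)$, so all $1/(1-\beta)$ terms have to cancel exactly. The first thing I would try is an explicit coupling of the two channel chains (until they agree, both states see the same noise), combined with part i).
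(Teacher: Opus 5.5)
\textbf{Part i)} is exactly the paper's argument.

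\textbf{Where your route differs from the paper.} For ii) and iii) your skeleton is sound, and in two places it is more careful than the paper.
\begin{enumerate}
\item[$\bullet$] The paper gets ii) in one line from Lemma~\ref{lemma:inc_V}, but that lemma only compares states with the same $(r,c)$. Your step a) uses it correctly and then faces the cross-$(r,c)$ comparison that the paper skips. In doing so you aim at $h\ub\ge -M$, which is the form the vanishing-discount conditions use, rather than the literal $h\ub\ge N\ge 0$.
\item[$\bullet$] In iii) the paper replaces $\bE[\beta^{\tau}V\ub(s(\tau))]$ by $\bE[\beta^{\tau}]V\ub(0,0,1)$ via Nummelin splitting. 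You instead keep $V\ub(s(\tau))$ and bound it on $\cA$. Since $V\ub(s(\tau))\ge V\ub(0,0,1)$ by Lemma~\ref{lemma:inc_V}, some uniform upper bound on $\cA$ is unavoidable, so your route is the honest one.
\end{enumerate}

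\textbf{The gap.} Both ii)c) and iii)b) need a bound on a \emph{difference} of values of $V\ub$ that is uniform in $\beta$, and you supply neither.
\begin{enumerate}
\item[$\bullet$] \emph{iii)b).} The bound does not follow from \eqref{eq:thm1_1}. The constant there is $M=(\beta\sum_i\g_1^i+\lambda)/(1-\beta)$. Bounding $\bE V\ub(a\D+w,0,0)$ above by \eqref{eq:thm1_1}, and $\bE V\ub(w,0,0)$ below by anything available (e.g.\ Lemma~\ref{lemma:V_r2_lb}), leaves a difference of two constants each of order $1/(1-\beta)$. This is the same cancellation problem you flag in ii)c). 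You must also cover the branch where $u=2$ is optimal at $(0,0,1)$; it is harmless, since the difference is then $\g_2\D^2$ by \eqref{eq:Q2_diff}.
\item[$\bullet$] \emph{ii)c).} The coupling does not close as described. When the channel chains meet, the two errors are in general different: they separate whenever one Channel-1 attempt succeeds and the other fails. So cost differences keep accruing until a common reset. Moreover, the cross term $2(\D_1-\D_0)\D_0$ lives on the optimally controlled path, to which \eqref{eq:for_lower_bd} (an always-Channel-1 computation) does not apply.
\item[$\bullet$] \emph{Smaller points.} By Table~\ref{tab:df}, the delivered error in ii)b) is $\cN(0,\sigma^2_d)$, not $\cN(0,\sigma^2_r)$. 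In iii)c), P3's unconditional moments and the geometric tail of $\tau_{\cA}$ do not combine directly, because $\D(t)$ and $\{\tau_{\cA}>t\}$ are dependent. The paper handles this with a renewal decomposition at Channel-1 successes (Lemma~\ref{lemma:kappa_finite}).
\end{enumerate}

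\textbf{How to close it.} Two $\beta$-free estimates suffice.

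\emph{First estimate (coupling in $\D$).} Run the system from $(x,0,c)$ with the actions of an optimal policy from $(y,0,c)$; these actions can be reconstructed from the system's own history. Use the same noise, channel path and delivery draws.
\begin{enumerate}
\item[$\bullet$] Up to the first reset $T$ (a Channel-1 success or Channel-2 delivery, common to both systems) the errors differ by exactly $a^t(x-y)$. Afterwards they coincide.
\item[$\bullet$] On $\{T>t\}$ both errors are free AR paths.
\item[$\bullet$] $\{T>t\}\subseteq\{c(0)=\cdots=c(t-d)=0\}$. This event is independent of the noise and has probability at most $(1-p_{01})^{t-d}$.
\end{enumerate}
Assumption~\ref{ass:sys_chn} then gives
\[
V\ub(x,0,c)-V\ub(y,0,c)\le C_1(x-y)^2+C_2|x-y|(1+|y|),
\]
with $C_1,C_2$ independent of $\beta$. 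Taking $x=\D$, $y=0$, $c=1$ settles iii)b) at once.

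\emph{Second estimate (channel mismatch via Channel~2).} Channel-2 delivery forgets $\D$, so use it to compare channel states.
\begin{enumerate}
\item[$\bullet$] Let $g(c):=\bE[V\ub(\D_d,0,c)]$ with $\D_d\sim\cN(0,\sigma^2_d)$.
\item[$\bullet$] The bound $V\ub\le Q\ub(\cdot;2)$ gives $g(c)\le C'+\beta^d\sum_{c'}p^{(d)}_{cc'}g(c')$. Here $C'=\g_2\sigma^2_d+\lambda+\sum_{i=1}^{d-1}\beta^i\sigma^2_i$ with $\sigma^2_i:=\sum_{k<i}a^{2k}$, which is bounded in $\beta$.
\item[$\bullet$] Since $g\ge 0$, this yields $|g(1)-g(0)|\le C'/\min_c(1-p^{(d)}_{cc})$.
\end{enumerate}

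\emph{Finishing ii)c).} Let $u^*$ be optimal at $(0,0,0)$. Then $V\ub(0,0,1)-V\ub(0,0,0)\le Q\ub(0,0,1;u^*)-Q\ub(0,0,0;u^*)$, and the right-hand side is:
\begin{enumerate}
\item[$\bullet$] $\beta^d(1-p_{01}-p_{10})^d\,(g(1)-g(0))$ if $u^*=2$;
\item[$\bullet$] $\beta(1-p_{01}-p_{10})\,(f(1)-f(0))$ if $u^*=1$, where $f(c):=\bE[V\ub(w,0,c)]$.
\end{enumerate}
In the second case, $|f(1)-f(0)|\le |g(1)-g(0)|+\max_c(g(c)-f(c))$. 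The term $g(c)-f(c)$ is nonnegative by Lemma~\ref{lemma:inc_V} and bounded by the first estimate. With this bound on $V\ub(0,0,1)-V\ub(0,0,0)$, your steps b) and d) complete ii).
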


\begin{proof} 
    i) It follows from~\eqref{j_beta} and~\eqref{eq:upper_bound} that $(1 - \beta) V\ub(\D', r', c') \le (1 - \beta) J\ub(\D', r', c') \leq 1/((1-p_{01}) (1 - a^2(1-p_{01}))) + \lambda$.

    ii) It follows from Lemma~\ref{lemma:inc_V} that $h\ub(s) \geq 0$ since $V\ub(s) \geq V\ub(\D',r',c')$. Hence, $N = 0$.

    iii) Fix $\varepsilon > 0$ and define 
    \nal{
    \cA(\varepsilon) := \{(\D,r,c) \in \cS: |\D| \le \varepsilon, r = 0, c = 1\}.
    }
    We consider the hybrid policy $\pi^{(hyb.)}$ that transmits via Channel 1 at each time $t \in \bZ_+$ until the Markov process $(\D(t),r(t),c(t-1))$ first hits the set $\cA(\epsilon)$ and then switches to an optimal policy for the MDP~\eqref{discount_opt_prob}. Note that the $\cA(\ve)$ is the set $\cA$ by taking $X = [\ve,\ve]$, and hence we continue using $\pi^{(hyb.)}$ to denote the hybrid policy. Let $\tau_{\cA(\varepsilon)} = \inf\{t \geq 0: (\D(t),r(t),c(t-1)) \in \cA(\varepsilon)\}$ be the first hitting time of $\cA(\varepsilon)$. Then for any initial state $s(0) = (\D,r,c) \in \cS$ we have from~\eqref{eq:V}
    \al{
    & V\ub(\D,r,c) \leq J\ub(\D,r,c; \pi^{(hyb.)}) \notag \\
    & = \bE_{\pi^{(hyb.)}}\Biggl[\sum_{t=0}^{\tau_{\cA(\ve)} - 1} \beta^t \ell(\D(t),r(t),c(t-1)) \bigg|  \notag \\
    & \hspace{5.5cm} s(0)= (\D,r,c)\Biggr] \notag \\
    & + \bE_{\pi^{(hyb.)}}\Biggl[\sum_{t= \tau_{\cA(\ve)}}^{\infty} \beta^t \ell(\D(t),r(t),c(t-1)) \bigg|  \notag \\
    & \hspace{5cm} s(0) = (\D,r,c)\Biggr].~\label{eq:split}
    }
    We will now focus on the second term of the above inequality. It follows from Lemma~\ref{lemma:irreducible} that the Markov process $(\D(t),r(t),c(t-1))$ induced by $\pi^{(hyb.)}$ is $\psi$-irreducible. It then follows from Theorems 5.2.2 and~5.2.3 of~\cite{meyn2012markov} that the sets $\cA(\ve)$ are small sets that satisfy the minorization condition~\cite[p. 105]{meyn2012markov}. Thus we can apply the Nummelin splitting construction~\cite{nummelin1978splitting},~\cite[p. 106]{meyn2012markov} to split the Markov process chain $(\D(t),r(t),c(t-1))$ induced by the policy $\pi^{(hyb.)}$. The Nummelin splitting technique allows us to construct an atom arbitrarily close to $(0,0,1)$ for the split chain. Since all transitions out of an atom are identical~\cite[p. 107]{meyn2012markov}, it follows that once the Markov process $(\D(t),r(t),c(t-1))$ hits $\cA(\ve)$ the evolution of the process is same as the process starting from an initial state $(0,0,1)$. Thus we have that
    \nal{
    & \bE_{\pi^{(hyb.)}}\Biggl[\sum_{t= \tau_{\cA(\ve)}}^{\infty} \beta^t \ell(\D(t),r(t),c(t-1)) \bigg|  \\
    & \hspace{5.5cm} s(0) = (\D,r,c)\Biggr] \\
    & = \bE_{\pi^{(hyb.)}} \lf[\beta^{\tau_{\cA(\ve)}}\rt] V\ub(0,0,1) \\
    & \leq V\ub(0,0,1).
    }
    Combining the above with~\eqref{eq:split} we have 
    \al{
    & V\ub(s) - V\ub(0,0,1) \notag \\
    & \leq \bE_{\pi^{(hyb.)}}\Biggl[\sum_{t=0}^{\tau_{\cA(\ve)} - 1} \beta^t \ell(\D(t),r(t),c(t-1)) \bigg|  \notag \\
    & \hspace{5.5cm} s(0) = (\D,r,c)\Biggr] \notag \\
    & \leq \bE_{\pi^{(hyb.)}}\Biggl[\sum_{t=0}^{\tau_{\cA(\ve)} - 1} \ell(\D(t),r(t),c(t-1)) \bigg| \notag \\
    & \hspace{5cm} s(0)= (\D,r,c)\Biggr]. \label{eq:end}
    }
    This completes the proof with $\kappa(\D,r,c)$ equal to the right hand side of~\eqref{eq:end}. The finiteness of $\kappa(\D,r,c)$ is shown in Lemma~\ref{lemma:kappa_finite}.
\end{proof}

For any admissible policy $\pi$, we denote $\zeta(s, \pi ) := \limsup\limits_{T \rightarrow \infty} \frac{1}{T} \bE_{\pi} \lf[\sum_{t=0}^{T} \lf(\D^2 + \lambda \mathds{1}\lf(u(t) \in \{1,2\} \rt) \rt)\rt]$. 

\begin{lemma} \label{lemma:GA}
    Consider the average cost problem~\eqref{opt_prob}. Then for any admissible policy $\pi$ we have that 
    \al{
    \inf_{s \in \cS} \inf_{\pi} \zeta(s, \pi) < \infty. \label{eq:GA}
    }
\end{lemma}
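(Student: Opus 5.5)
It suffices to exhibit a single initial state $s\in\cS$ and a single admissible policy $\pi$ with $\zeta(s,\pi)<\infty$, since the double infimum in~\eqref{eq:GA} is then bounded above by this value. I would take $\pi=\pi^{(01)}$, the policy from the proof of Lemma~\ref{lemma:VI_assump} P3), which applies $u=0$ whenever $r\ge 1$ and $u=1$ whenever $r=0$. As the initial state I would take $s=(\D,0,c)$ with $r=0$, for instance the reference state $(0,0,1)$. Under $\pi^{(01)}$ started with $r(0)=0$, Channel~2 is never used, so $r(t)=0$ for all $t$. The error then evolves as in Case a) of that proof, with the random gains $a(m)$ defined in~\eqref{eq:a(m)}.

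The key step is to reuse the uniform-in-$t$ moment bound obtained there. From~\eqref{eq:for_lower_bd}, dropping the factor $\beta^t$, or equivalently repeating the same computation with $\beta=1$, we get for every $t\ge 1$
\nal{
\bE_{\pi^{(01)}}[\D(t)^2] \le \D^2 a^{2t}(1-p_{01})^{t-1} + \frac{1}{(1-p_{01})(1-a^2(1-p_{01}))}.
}
This uses only Assumption~\ref{ass:sys_chn}, namely $a^2(1-p_{01})<1$, to sum the geometric series $\sum_k a^{2(t-k-1)}(1-p_{01})^{t-k-2}$. The first term is bounded by $\D^2/(1-p_{01})$ and vanishes at $\D=0$. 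Adding the per-step energy cost, which is at most $\lambda$, gives
\nal{
\frac1T\bE_{\pi^{(01)}}\Big[\sum_{t=0}^{T}\ell(s(t),u(t))\Big]\le \frac{T+1}{T}\Big(\frac{\D^2}{1-p_{01}} + \frac{1}{(1-p_{01})(1-a^2(1-p_{01}))}+\lambda\Big).
}
Taking $\limsup_{T\to\infty}$ yields a finite bound on $\zeta(s,\pi^{(01)})$, and hence on $\inf_s\inf_\pi\zeta(s,\pi)$.

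The only step requiring care is ensuring that the computation in~\eqref{eq:for_lower_bd} does not secretly depend on $\beta<1$. The conditional-expectation argument there is purely pathwise in the channel process: $\prod_{m=k}^{t-1}a(m)^2$ is nonzero only if $c(m)=0$ on the whole stretch, which has probability at most $(1-p_{01})^{t-k-1}$. So that argument transfers verbatim to $\beta=1$. Alternatively, one could cite Lemma~\ref{lemma:cond_avg_cost} i) together with a Tauberian-type inequality, but the direct bound above is simpler and avoids that detour.
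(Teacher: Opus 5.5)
Your proof is correct, and it is more direct than the paper's.

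\textbf{What the paper does.} It also starts by noting that \eqref{eq:GA} is equivalent to the existence of one pair $(\tilde s,\tilde\pi)$ with $\zeta(\tilde s,\tilde\pi)<\infty$. It never writes such a pair down, though. Instead it invokes Condition~5.4.5(a) and Theorem~5.4.6 of Hern\'andez-Lerma and Lasserre. The required Assumptions~4.2.1 and~5.4.1 come from Lemmas~\ref{lemma:VI_assump} and~\ref{lemma:cond_avg_cost}. The latter relies on the irreducibility, splitting and drift arguments of Lemmas~\ref{lemma:irreducible}, \ref{lemma:kappa_finite} and~\ref{lemma:drift}. Part~i) of Lemma~\ref{lemma:cond_avg_cost} is itself derived from the discounted bound \eqref{eq:upper_bound} for $\pi^{(01)}$. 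So the paper ultimately rests on the same policy you use, but reaches it through discounted costs and an external theorem.

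\textbf{What you do differently.} You apply the underlying second-moment estimate directly at $\beta=1$. Your check that it does not use $\beta<1$ is right. The inequality you state is a valid upper bound even though the intermediate conditioning in \eqref{eq:for_lower_bd} is slightly off in its indices. This gives a self-contained proof with an explicit constant. It works directly with the $\limsup$ criterion and needs none of parts~ii)--iii) of Lemma~\ref{lemma:cond_avg_cost}. The paper's route just bundles this hypothesis with the other vanishing-discount conditions it needs anyway for Proposition~\ref{prop:opt_pol_convg}.

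\textbf{Two side remarks.} First, your Tauberian alternative also works for a fixed policy with nonnegative costs: taking $\beta=1-1/T$ gives $\frac{1}{T}\sum_{t<T}c_t\le 4(1-\beta)\sum_{t\ge 0}\beta^t c_t$ for $T\ge 2$. Second, if you use the policy that transmits on Channel~2 whenever it is free instead of $\pi^{(01)}$, the error is reset to $\cN(0,\sigma_d^2)$ every $d$ steps. That policy yields the lemma without Assumption~\ref{ass:sys_chn}, which the statement does not list even though it is standing in the paper.
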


\begin{proof}
    We will first show that~\eqref{eq:GA} is equivalent to showing that there exists a state $\Tilde{s} \in \cS$ and a policy $\Tilde{\pi}$ such that $\zeta(\Tilde{s}, \Tilde{\pi}) < \infty$. For this it suffices to show, i) if $\zeta(\Tilde{s}, \Tilde{\pi}) < \infty$, then it  implies that~\eqref{eq:GA} holds, and ii) conversely, if \eqref{eq:GA} holds, then there exists $\Tilde{s}$ and $\Tilde{\pi}$ such that $\zeta(\Tilde{s}, \Tilde{\pi}) < \infty$. We will begin with i).

    i) follows immediately since $\inf_{s \in \cS} \inf_{\pi} \zeta(s,\pi) \leq \zeta(\Tilde{s}, \Tilde{\pi}) < \infty$. Next we will show ii) holds.

    ii) By the definition of infimum, for any $\epsilon > 0$, there exists a state $\Tilde{s} \in \cS$ and an admissible policy $\Tilde{\pi}$ such that $\zeta(\Tilde{s},\Tilde{\pi}) < \inf_{s \in \cS} \inf_{\pi} \zeta(s, \pi) + \epsilon$. Since the infimum is assumed to be finite, the right hand side of the former expression is finite, and hence $\zeta(\Tilde{s},\Tilde{\pi})$ is finite. Thus ii) follows by choosing any $\epsilon > 0$.

    The proof then follows from Condition 5.4.5 (a) of~\cite{Hernandez2012discrete} and~\cite[Theorem 5.4.6]{Hernandez2012discrete} once we verify that Assumptions 4.2.1 and 5.4.1 hold in our case. In Lemmas~\ref{lemma:VI_assump} and~\ref{lemma:cond_avg_cost}, we verify Assumptions 4.2.1 and 5.4.1, respectively, are satisfied in our case. This completes the proof. 
\end{proof}

\begin{lemma} \label{lemma:kappa_finite}
    Consider the Markov process $\{(\D(t),r(t), c(t-1))\}$ induced by the policy $\pi^{(hyb.)}$.  Let $s(0)$ denote the initial state of the process. For any initial state $s(0) = s$, we have,
    \nal{
    \kappa(s) & = \bE_{\pi^{(hyb.)}}\Biggl[\sum_{t=0}^{\tau_{\cA(\ve)} - 1} \ell(\D(t),r(t),c(t-1)) \bigg|  s(0)= s\Biggr], \\
    & < \infty,
    }
    where $\kappa(s)$ is as defined in Lemma~\ref{lemma:cond_avg_cost}-iii).
\end{lemma}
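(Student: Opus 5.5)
Up to the hitting time $\ta$ the policy $\pi^{(hyb.)}$ is the ``always use Channel 1'' policy. So $\kappa(s)$ is the expected undiscounted cost accumulated by the chain under $\pi^{(01)}$ until it first enters $\cA(\ve)=\{|\D|\le\ve, r=0, c=1\}$. The plan is to apply a Cauchy--Schwarz bound
\[
\kappa(s)=\bE\Big[\sum_{t\ge 0}\mathds{1}(t<\ta)\,\ell(s(t),u(t))\Big]\le \sum_{t\ge0}\bP(\ta>t)^{1/2}\,\bE\big[\ell(s(t),u(t))^2\big]^{1/2},
\]
and to show two things: the fourth moments $\bE[\D(t)^4]$ are bounded uniformly in $t$ (with a bound depending on $s$), and $\bP(\ta>t)$ decays geometrically. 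The series then converges and $\kappa(s)<\infty$.

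\textbf{Step 1 (initial delay).} If $r(0)=r\ge1$, the first $r$ steps are idle. Their cost is $\sum_{t<r}\bE[\D(t)^2]<\infty$ by the explicit computation in Case b) of Lemma~\ref{lemma:VI_assump}. At time $r$ the error is $\D(r)\sim\cN(0,\sigma^2_{r})$, which has finite moments of all orders. By the strong Markov property it therefore suffices to treat initial states with $r=0$ and a Gaussian (or deterministic) initial error.

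\textbf{Step 2 (moment bound).} With $r=0$, under $u\equiv1$ we have $r(t)=0$ for all $t$, and the representation $\D(t)=(\prod_m a(m))\D+\sum_k(\prod_{m>k}a(m))w(k)$ from Case a) of Lemma~\ref{lemma:VI_assump} applies.

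\emph{Main obstacle.} Assumption~\ref{ass:sys_chn} controls only second moments, through $a^2(1-p_{01})<1$. A fourth-moment analysis would instead need $a^4(1-p_{01})<1$, which can fail. I therefore expect the Cauchy--Schwarz route to be too demanding, and I would replace it by a renewal argument that uses only second moments.

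\textbf{Renewal argument.} Call $t$ a reset time if $c(t)=1$ and $u(t)=1$. At such a time $\D(t+1)=w(t)$ exactly and the new channel state is $c=1$, so $s(t+1)=(w(t),0,1)$ with $w(t)\sim\cN(0,1)$ independent of the past.
\begin{enumerate}
\item At each reset the next state lies in $\cA(\ve)$ with probability $q:=F_w([-\ve,\ve])>0$, independently across resets. The number of resets before $\ta$ is therefore dominated by a $\mathrm{Geom}(q)$ random variable.
\item Between consecutive resets, the chain has the same law as a fresh start from state $(w,0,1)$, conditioned on $|w|>\ve$.
\item The gaps between resets are driven by the GE channel. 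Starting from $c=1$, the gap length has a geometric-type tail governed by the return probability $p_{01}$ to state $1$.
\item Within a gap the error evolves as $\D\mapsto a\D+w$ while the channel is bad, so the expected cost of a block is $\sum_t\bE[\D(t)^2\mathds{1}(\text{block not yet ended})]$. By exactly the computation leading to~\eqref{eq:for_lower_bd}, this is bounded by a constant $B(\D_{\rm start})=\frac{\D_{\rm start}^2}{1-a^2(1-p_{01})}+C$, using $a^2(1-p_{01})<1$. The energy cost $\lambda$ per step contributes $\lambda$ times the expected block length, which is finite.
\end{enumerate}

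\textbf{Step 3 (assembly).} The first block starts from the given $\D$, so its expected cost is at most $B(\D)$. Every later block starts from $w$ conditioned on $|w|>\ve$, which has finite second moment, so its expected cost is at most $\bar B:=\bE[B(w)\mid |w|>\ve]<\infty$. Wald's identity then gives
\[
\kappa(s)\le C_r+B(\D)+\bar B/q<\infty,
\]
where $C_r$ is the finite initial-delay cost from Step 1.

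\textbf{Remaining care point.} The one detail to check is the conditioning on $|w|>\ve$. Failure to hit $\cA(\ve)$ at a reset is determined only by $w(t)$, so the conditioning only restricts $w$ to a set of positive probability. This inflates its second moment by at most a factor $1/(1-q)$, so $\bar B$ stays finite.
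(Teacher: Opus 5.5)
Your renewal argument is correct and is essentially the paper's proof: remove the initial Channel 2 delay, cut time at the Channel 1 successes $\varsigma_j$ (where $\D(\varsigma_j+1)=w_j$), note that the index $J_{\ve}$ of the first success with $|w_j|\le\ve$ is $\mathrm{Geom}(q)$, bound each block's expected squared error using $a^2(1-p_{01})<1$, and sum with Wald's identity. Conditioning the pre-hitting blocks on $|w|>\ve$ is the careful form of the paper's Wald step, which plugs in the unconditioned $\bE[S_1]$ even though $\{J_{\ve}>j\}$ depends on $w_j$; your only slip is harmless: for blocks starting from $c=1$ the coefficient of $\D_{\rm start}^2$ is $1+a^2p_{10}/(1-a^2(1-p_{01}))$, which can exceed your $1/(1-a^2(1-p_{01}))$.
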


\begin{proof}
    We first consider an initial state $s = (\D_0,r_0,c_0)$ with $r_0 \ge 1$. Under the policy $\pih$, the action taken is $u(t) = 0$ for $t = 0, 1, \ldots, r_0-1$. Hence, the error evolves as $\D(t) = a^t \D_0 + \sum_{k=0}^{t-1} a^{t-1-k} w(k)$. Thus, for $t = 0, 1, \ldots, r_0-1$ we have,
    \nal{
    \bE_{\pih}\lf[\D(t)^2\rt] = a^{2t} \D^2 + \sum_{k=0}^{t-1} a^{2k} < \infty,
    }
    and hence,
    \al{
    \bE_{\pih}\lf[\sum_{t=0}^{r_0-1} \D(t)^2\rt] < \infty. \label{eq:r0}
    }
    Next, we note that $\ta \ge r_0$. This is because hitting the set $\cA(\ve)$ requires $r = 0$, which first occurs at time step $r_0$. Specifically, at $t = r_0$, the state of the Markov process $\{\D(t),r(t),c(t-1))\}$ is $(\D(r_0), 0, c(r_0 -1))$ with $\D(r_0) \sim \cN(0, \sigma^2_d)$. Then, it follows from~\eqref{eq:r0} that it suffices to prove the proposition for an initial state with $r_0 = 0$. Hence, for the subsequent proof we fix the initial state as $s = (\D_0,0,c_0)$. Without loss of generality, we assume that $s \notin \cA(\ve)$ (otherwise $\tau_{\mathcal{A}(\varepsilon)} = 0$ and $\kappa(s) = 0$ trivially). Moreover, under $\pih$, we have that $u(t) \equiv 1$ and $r(t) \equiv 0$ for all $t < \ta$. Now, we make the following important observation.

    \emph{Observation 1}: Every state with $(r=0,c=1)$ reached under policy $\pih$ at time step $t \ge 1$ occurs immediately after a successful transmission over Channel 1. Because under $\pih$ we have $u(t-1) = 1$, so $c(t-1) = 1$ implies that the packet is successfully delivered to the estimator at time step $t-1$. Hence, $\D(t) = w(t-1) \sim \cN(0,1)$. As a result, the Markov process $\{\D(t),r(t),c(t-1)\}$ can hit the set $\cA(\ve)$ at time step $t$ only after a successful transmission at $t-1$ and thus, $\D(t) \sim \cN(0,1)$.

    Next, let $\varsigma_1 := \inf\{t \ge 0 : c(t) = 1\}$ denote the time of the first successful transmission over Channel 1, and $\varsigma_{j+1} := \inf\{t > \varsigma_j : c(t) = 1\}$ for $j \ge 1$. Since, Channel 1 is a two state Markov chain with $p_{01}, p_{10} \in (0,1)$, we have that $\varsigma_j < \infty$ almost surely for all $j \in \bN$. We denote $w_j:= w(\varsigma_j)$. Then, $\D(\varsigma_j + 1) = w_j$. Moreover, since each $\varsigma_j$ is measurable with respect to $\sigma(\{c(t)\})$ which is independent of the noise process $\{w(t)\}$, it follows that $w_j \sim \cN(0,1)$ for all $j \ge 1$. 

    Now, we note that each $\varsigma_{j} + 1$ is a stopping time for the Markov process $\{\D(t),r(t),c(t-1)\}$, and the state at time $\varsigma_j + 1$ is $(w_j, 0, 1)$. Hence, by the strong Markov property~\cite{meyn2012markov}, given the information till $\varsigma_{j}+1$, the evolution of the Markov process $\{\D(t),r(t),c(t-1)\}$ depends only on $(w_j,0,1)$ and is independent of the history. It then follows that for $E_j := \{\varsigma_j+1, \varsigma_j + 2, \ldots, \varsigma_{j+1}\}$, the following squared cumulative error forms an i.i.d. sequence.
    \nal{
    S_j := \sum_{t \in E_j} \D(t)^2.
    }

   Next, we note that based on Observation 1, the Markov chain enters $\cA(\ve)$ at time step $\varsigma_j + 1$ if and only if $|\D(\varsigma_j + 1)| \le \ve$ which is equivalent to $|w_j| \le \ve$. Now, define
    \nal{
    J_{\ve}:= \inf\{j \ge 1: |w_j| \le \ve\},
    }
    i.e., $J_{\ve}$ is the index of the first success whose value is less than $\ve$. Then, the time at which the Markov process hits $\cA(\ve)$ corresponds to the $J_{\ve}$-th success, i.e., $\ta = \varsigma_{J_{\ve}} + 1$. Since $w_j$ are i.i.d. with $q:=\bP(|w_j| \le \ve)$, we have $J_{\ve} \sim \text{Geom}(q)$, $\bE[J_{\ve}] = 1/q < \infty$. 

    Now, we decompose the $\kappa(s)$ term into three parts and show that each part is finite. For this, denote $I_0 = \{0, 1, \ldots, \varsigma_1\}$ as the initial interval before the first successful transmission, and let $S_0 := \sum_{t \in I_0} \D(t)^2$. Then, we can partition the time $t < \ta$ as $I_0 \cup E_1 \cup \ldots E_{J_{\ve}-1} = \{0,1,\ldots, \varsigma_{J_{\ve}}\}$. This implies that we can write $\kappa(s)$ as follows,
    \al{
    & \kappa(s) = \bE_{\pih} \lf[\sum_{t=0}^{\ta -1} \lambda \rt] + \bE_{\pih}\lf[\sum_{t =0 }^{\ta - 1} \D(t)^2\rt] \notag \\
    & = \bE_{\pih} \lf[\sum_{t=0}^{\ta -1} \lambda \rt] + \bE_{\pih}\lf[\sum_{t = 0}^{\varsigma_{J_{\ve}}} \D(t)^2 \Big | s(0) = s\rt] \notag \\
    & = \lambda \bE_{\pih}\lf[\ta - 1\rt] + \bE_{\pih}\lf[S_0 \mid s(0) = s\rt] \notag \\
    & \qquad \qquad + \bE_{\pih}\lf[\sum_{j=1}^{J_{\ve}-1} S_j\rt]. \label{eq:k(s)}
    }
    Now, it follows from Lemma~\ref{lemma:drift} that the first term on the right hand of the above equation is finite. The proof will complete if we show that the second and third terms in the right hand side of the above equation are also finite. This is done next. 

    a) \textbf{Bounding $\bE_{\pih}\lf[\sum_{j=1}^{J_{\ve}-1} S_j\rt]$}: Since $\{S_j\}_{j \ge 1}$ is an i.i.d. sequence and $J_{\ve}$ is a stopping time with $\bE[J_{\ve}] = 1/q$, it follows from Wald's equation~\cite[Theorem 2.6.2]{durrett2019probability} that, 
    \al{
    & \bE_{\pih}\lf[\sum_{j=1}^{J_{\ve}-1} S_j \rt] = \bE_{\pih}[S_1] \bE[J_{\ve}-1] \\
    & \le \bE_{\pih}[S_1] \bE[J_{\ve}]  = \frac{\bE_{\pih}[S_1]}{q}. \label{eq:finite_1}
    }
    We now bound the term $\bE_{\pih}[S_1]$. For this, we note that within $E_1$, the error resets at $\varsigma_1 +1$ with $\D(\varsigma_1 + 1) = w_1 \sim \cN(0,1)$, and then evolves under consecutive transmission failures. Thus, we can write $\D(\varsigma_1 + 1+i) = a^i w_1 + \sum_{l= 1}^{i} a^{i-1} w(\varsigma_1 + l)$, for $i \ge 0$. Taking the expectation of the squared error yields,
    \nal{
    & \bE_{\pih} \lf[\D(\varsigma_1 + 1 + i)^2\rt] = \lf(a^{2i} + \sum_{m=0}^{i-1} a^{2m}\rt).
    }
    Now at $t = \varsigma_1+1$, we have $c(\varsigma_1) = 1$ followed by $c(t) = 0$ for all $\varsigma_1+1 \le t \le \varsigma_{2}$. This implies that $\bP(\varsigma_2 - \varsigma_1 > i) = p_{10} (1-p_{01})^{i-1}$ for $i \ge 1$, and $\bP(\varsigma_2 - \varsigma_1 > 0) = 1$. As a result, we have, 
    \nal{
    & \bE_{\pih}[S_1] = \sum_{t \in E_1} \bE_{\pih}[\D(t)^2] \\
    & = \sum_{i=0}^{\varsigma_2 - \varsigma_1 - 1} \bE_{\pih} \lf[\D(\varsigma_1 + 1 + i)^2\rt] \\
    & =\sum_{i = 0}^{\infty} \bE_{\pih}\lf[\D(\varsigma_1 + 1 + i)^2 \mathds{1}(\varsigma_2 - \varsigma_1 > i)\rt] \\
    & = \sum_{i=0}^{\infty} \bE_{\pih} \lf[\D(\varsigma_1 + 1 + i)^2\rt] \bP(\varsigma_2 - \varsigma_1 > i) \\
    & = 1 + \sum_{i=1}^{\infty} \lf(a^{2i} + \sum_{m=0}^{i-1} a^{2m}\rt) p_{10} (1-p_{01})^{i-1} \\
    & = 1 + \frac{a^2 p_{10}}{1- a^2(1-p_{01})} + p_{10} \sum_{m=0}^{\infty} a^{2m} \sum_{i=m+1}^{\infty} (1-p_{01})^{i-1} \\
    & = 1 + \frac{p_{10}}{1-a^2(1-p_{01})} \lf(a^2 + \frac{1}{p_{01}}\rt) \\
    & < \infty,
    }
    where the fourth equality follows since the event $\{\varsigma_2 - \varsigma > i\}$ depends on Channel 1 process $\{c(t)\}$ that is independent of the noise process $\{w(t)\}$ and hence, independent of $\D(\varsigma_1 + 1 +i)$. The second last equality follows from Assumption~\ref{ass:sys_chn}. Combining the above result with~\eqref{eq:finite_1} we have that,
    \al{
    \bE_{\pih}\lf[\sum_{j=1}^{J_{\ve}-1} S_j\rt] < \infty. \label{eq:bound_a}
    }

    b) \textbf{Bounding $\bE_{\pih} \lf[S_0 \mid s(0) = s\rt]$}: For any initial state $s=(\D_0,0,c_0)$ we have that $\bP(\varsigma_1 > i) = p_{c_0 0} (1 - p_{01})^i$ for $i \ge 1$, and on $\{\varsigma_1 > i\}$, we have that $\D(i) = a^{i} \D_0 + \sum_{l=1}^{i-1} a^{i-1-l} w(l)$. Hence, following on similar lines as part a) for bounding $\bE_{\pih}[S_1]$ we have that under Assumption~\ref{ass:sys_chn},
    \al{
     & \bE_{\pih} \lf[S_0 \mid s(0) = s\rt] \notag \\
     & = \lf(1 + \frac{a^2 p_{c_0 0}}{1 - a^2(1-p_{01})}\rt) \D_0^2 + \frac{p_{c_0 0}}{p_{01}(1 - a^2(1-p_{01}))}. \notag \\
     & < \infty. \label{eq:bound_b}
    }
    Finally, combining~\eqref{eq:bound_a} and~\eqref{eq:bound_b} with~\eqref{eq:k(s)} completes the proof.
\end{proof}

\begin{lemma} \label{lemma:drift}
    
Consider the Markov process $\{(\D(t),r(t), c(t-1))\}$ induced by the policy $\pi^{(hyb.)}$, and let $\ta:=\min\{t\ge 1: s(t)\in\cA(\ve)\}$ with $\cA(\varepsilon) := \{(\D,r,c) \in \cS: |\D| \le \varepsilon, r = 0, c = 1\}$. Then there exist a function $W:\cS\rightarrow[0,\infty)$ and a finite constant $b < \infty$ such that
\nal{
\bE_{\pih}[W(s(1)) - W(s(0)) \mid s(0) = s] \le -1 + b \mathds{1}_{\cA(\ve)}(s).
}

Consequently, we have
\nal{
\bE_{\pih}[\ta \mid s(0) = s] \le W(s) + b\mathds{1}_{\cA(\ve)}(s) < \infty.
}
\end{lemma}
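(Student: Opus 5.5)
I will take the Foster--Lyapunov route: set $W(s)$ equal to the expected hitting time itself, or an explicit upper bound on it, and then check the drift inequality. Under $\pih$, before hitting $\cA(\ve)$, only the discrete coordinates $(r,c)$ and the event $\{|\D|\le\ve\}$ at a success matter for the hitting time. By Observation~1 in the proof of Lemma~\ref{lemma:kappa_finite}, entry into $\cA(\ve)$ at time $t\ge1$ happens exactly when $c(t-1)=1$ and $|w(t-1)|\le\ve$. So the hitting time is driven by the channel chain together with i.i.d.\ Bernoulli($q$) marks, $q=\bP(|w|\le\ve)>0$, and does not depend on $\D$.

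I will define $W(\D,r,c):=r+m(c)$, where $m(c)$ is the expected number of steps, starting with previous channel state $c$, until a step with $c(t)=1$ and $|w(t)|\le\ve$ occurs.

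First I will solve for $m(\cdot)$. Under $u\equiv1$, one step from channel state $c$ produces a good-and-small transition with probability $p_{c1}q$. Otherwise the next channel state is $0$, or it is $1$ with a large error. This gives the linear system
\[
m(c)=1+p_{c0}\,m(0)+p_{c1}(1-q)\,m(1),\qquad c\in\{0,1\}.
\]
Since $p_{01}q>0$ and $p_{11}q>0$ (here $p_{11}=1-p_{10}>0$), the chain with killing is absorbing, so the system has a unique finite nonnegative solution. I will also fold the Channel~2 phase into $W$. For $r\ge1$, the next state deterministically has $r-1$ and the channel evolves freely, so I will instead take
\[
W(\D,r,c)=r+\sum_{c'}p^{(r)}_{cc'}m(c'),
\]
which is the exact expected time to hit $\cA(\ve)$ from that state.

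Second, I will verify the drift inequality. For $s\notin\cA(\ve)$, first-step analysis gives $\bE[W(s(1))\mid s(0)=s]=W(s)-1$ exactly. In the case $r=0$ this uses the defining equation for $m$, with $W=0$ credited on the absorbing event. Since $\cA(\ve)$ is not literally absorbing, I will define $W$ to be $0$ on $\cA(\ve)$ only in the sense of the hitting time. Concretely, I keep $W(s)=m(1)$ on $\cA(\ve)$ and write $\bE[W(s(1))]-W(s)\le -1+p_{11}q\,m(1)+\ldots$. The slack is absorbed by $b:=1+\max_{s\in\cA(\ve)}\bE[W(s(1))\mid s(0)=s]$, which is finite because $W$ takes only finitely many values. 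Off $\cA(\ve)$ I also need to handle the transition into $\cA(\ve)$, which moves $W$ down to $m(1)$ rather than to $0$. For this I will use the standard comparison theorem \cite[Theorem 11.3.4]{meyn2012markov} (Dynkin's formula), with $W$ defined off the target and the drift bound of $-1$ required only outside $\cA(\ve)$.

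Finally, the bound $\bE[\ta\mid s(0)=s]\le W(s)+b\mathds{1}_{\cA(\ve)}(s)$ will follow from Dynkin's formula applied to the stopped process $W(s(t\wedge\ta))$ and monotone convergence, using $W\ge0$. The main obstacle is the bookkeeping of the boundary term: the definition $\ta=\min\{t\ge1\}$ versus $\inf\{t\ge0\}$, and the fact that $W$ is not zero on $\cA(\ve)$. I will handle this by proving the drift only off $\cA(\ve)$ and absorbing the one-step excursion from $\cA(\ve)$ into $b$. Finiteness follows because $W$ is bounded, which is stronger than needed.
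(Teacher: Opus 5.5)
There is a genuine gap: your Lyapunov function (the expected hitting time built from the linear system for $m(\cdot)$) is a sound idea, but the concrete $W$ you settle on violates the drift inequality off the target, and neither of your remedies repairs it.

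\textbf{Where it fails.} You keep $W(\D,0,1)=m(1)$ for every $\D$, including $|\D|\le\ve$. Take any $s=(\D,0,c)\notin\cA(\ve)$ with $u=1$. Then
\[
\bE_{\pih}[W(s(1))\mid s(0)=s]=p_{c0}\,m(0)+p_{c1}\,m(1)=m(c)-1+p_{c1}q\,m(1),
\]
so the drift is $-1+p_{c1}q\,m(1)>-1$. It can be arbitrarily close to $0$: as $p_{10}\to0$ one gets $m(1)\to1/q$, so the drift at $(\D,0,1)$ with $|\D|>\ve$ tends to $0$.

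These states lie outside $\cA(\ve)$, so no choice of $b$ in the term $b\mathds{1}_{\cA(\ve)}(s)$ can absorb the excess. Appealing to \cite[Theorem 11.3.4]{meyn2012markov} or to Dynkin's formula ``with $W$ defined off the target'' does not help either. That argument needs $\bE[W(s(1))\mid s(0)=s]\le W(s)-1$ off the target, with the actual values of $W$ on the target inside the expectation. It then discards the boundary term $\bE[W(s(\ta))]\ge0$. With your $W$ this boundary term equals $m(1)>0$, and it is exactly what compensates the accumulated positive excess, so it cannot be dropped. As written, neither the drift inequality nor the bound on $\bE_{\pih}[\ta\mid s(0)=s]$ follows.

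\textbf{The fix.} Use the version you state first and then abandon. The non-absorption of $\cA(\ve)$ is exactly what the $b\mathds{1}_{\cA(\ve)}$ term is for, so you may set:
\begin{itemize}
\item $W:=0$ on $\cA(\ve)$;
\item $W(\D,0,c):=m(c)$ on $\{r=0\}\setminus\cA(\ve)$;
\item $W(\D,r,c):=r+\sum_{c'}p^{(r)}_{cc'}m(c')$ for $r\ge1$.
\end{itemize}
The drift is then exactly $-1$ at $r=0$ off $\cA(\ve)$ (this is the defining equation of $m$) and at $r\ge2$. At $r=1$ it is at most $-1$, because landing in $\cA(\ve)$ at the delivery step now lowers $W$ to $0$. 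For the same reason, your $r\ge1$ formula is only an upper bound on the expected hitting time, not its exact value as you claim. On $\cA(\ve)$ the drift equals $\bE[W(s(1))\mid s(0)=s]\le d-1+\max_c m(c)$, even if the post-switch optimal policy picks $u=2$. So your formula for $b$ is then exactly right.

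\textbf{Comparison with the paper.} Once repaired, your argument is a sharper variant of the paper's. The paper uses a cruder bounded $W$: $0$ on $\cA(\ve)$, the constant $1/\underline{\delta}$ on the rest of $\{r=0\}$, and $1/\underline{\delta}+r$ for $r\ge1$. Here $\underline{\delta}$ is a uniform lower bound on the one-step entry probability $p_{c1}q$; the paper writes $\min\{p_{01},1-p_{01}\}q$, but what is needed is $\min\{p_{01},1-p_{10}\}q$. The paper's route needs no linear system. Yours yields the exact value $\bE_{\pih}[\ta\mid s(0)=s]=m(c)$ from $r=0$ states outside $\cA(\ve)$. Both functions are bounded, so finiteness is immediate either way.
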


\begin{proof}
    We first note that,
    \al{
    \bP(s(1) \in \cA(\ve) \mid s(0) = (\D, 0, c)) = p_{c1} \bP(|w| \le \ve),~\label{eq:drift_1} 
    }
    where $\{w(t)\}$ is the noise process in~\eqref{source}. Let $q:= \bP(|w| \le \ve)$ and $\underline{\delta} := \min\{p_{01}, 1-p_{01}\}q$. Then, we have from~\eqref{eq:drift_1} that $\bP(s(1) \in \cA(\ve) \mid s(0) = (\D, 0, c)) \ge \underline{\delta}$. Also, since $p_{01} \in (0,1)$ and $(1-p_{01}) \in (0,1)$, we have $\underline{\delta} \in (0,1)$. Hence, $1/\underline{\delta} < \infty$. Now, we define a function $W: \cS \rightarrow [0, \infty]$ as,
    \al{ \label{eq:lyap}
    W(\D,r,c) =
    \begin{cases}
        0 \mbox{ if } (\D,r,c) \in \cA(\ve), \\
        \frac{1}{\underline{\delta}} \mbox{ if } r=0, (\D,r,c) \neq \cA(\ve), \\
        \frac{1}{\underline{\delta}} + r \mbox{ if } r \ge 1.
    \end{cases}
    }
    Thus, we have that $0 \le W \le 1/\underline{\delta} + (d-1)$. 

    Now, we show that $\bE_{\pih}[W(s(1)) - W(s(0)) \mid s(0) = (\D,r,c)] \le -1$ for every $(\D,r,c) \notin \cA(\ve)$. We consider the following cases:

    Case i) $r = 0, c=0$. With $u = 1$, it follows from Table~\ref{tab:df} that with probability $1-p_{01}$, the next state is $(a\D+w,0,0) \notin \cA(\ve)$. So, we have from~\eqref{eq:lyap} that $W(a\D+w,0,0) = 1/\underline{\delta}$. Moreover, with probability $p_{01}$, the next state is $(w,0,1) \in \cA(\ve)$ if and only if $|w| \le \ve$.
     Hence, we have
     \nal{
     & \bE_{\pih}[W(s(1)) - W(s(0)) \mid s(0) = (\D,r,c)] \\
     & = (1 - p_{01}) \frac{1}{\underline{\delta}} + p_{01} (1- q) \frac{1}{\underline{\delta}} - \frac{1}{\underline{\delta}} \\
     & = -\frac{p_{01}q}{\underline{\delta}} \le -1,
     }
     where the last inequality follows since $p_{01}q \ge \underline{\delta}$ by definition of $\underline{\delta}$.

     Case ii) $r = 0, c = 1, |\D| > \ve$. With $u=1$, it follows from Table~\ref{tab:df} that with probability $p_{10}$, the next state is $(a\D+w,0,0) \notin \cA(\ve)$. So, we have from~\eqref{eq:lyap} that $W(a\D+w,0,0) = 1/\underline{\delta}$. Moreover, with probability $1-p_{10}$, the next state is $(w,0,1) \in \cA(\ve)$ if and only if $|w| \le \ve$.
     Hence, we have
     \nal{
     & \bE_{\pih}[W(s(1)) - W(s(0)) \mid s(0) = (\D,r,c)] \\
     & = -\frac{1-p_{10}q}{\underline{\delta}} \le -1,
     }
     where the last inequality follows since $p_{01}q \ge \underline{\delta}$ by definition of $\underline{\delta}$. 

     Case iii) $r \ge 2$. For this case, we have $u = 0$. Then, the next state is $(a\D+w, r-1, c_+) \notin \cA(\ve)$. Hence, we have
     \nal{
     & \bE_{\pih}[W(s(1)) - W(s(0)) \mid s(0) = (\D,r,c)] \\
     & = \frac{1}{\underline{\delta}} + (r-1) - \frac{1}{\underline{\delta}} + r = -1.
     }

     Case iv) $r = 1$. For this case, we have $u = 0$. Then, it follows from Table~\ref{tab:df} that the next state is $(\D_+,0,c_+)$ such that $\D_+ \sim \cN(0, \sigma^2_d)$ and $c_+ \in \{0,1\}$. Hence, $\bE_{\pih}[W(s(1)) \mid s(0) = (\D,r,c)] \in \{0,1/\underline{\delta}\}$ and $W(s(0)) = 1/\underline{\delta} + 1$. Hence, $\bE_{\pih}[W(s(1)) - W(s(0)) \mid s(0) = (\D,r,c)] \le -1$.

    Next, we show that $\bE_{\pih}[W(s(1)) - W(s(0)) \mid s(0) = (\D,r,c)] \le -1 + b$ for some $b < \infty$ and every $(\D,r,c) \in \cA(\ve)$. We observe that for all states with $r=0$, it follows from~\eqref{eq:lyap} that $\bE_{\pih}[W(s(1)) \mid s(0) = (\D,r,c)] \le \frac{1}{\underline{\delta}}$. Thus, $\bE_{\pih}[W(s(1)) - W(s(0)) \mid s(0) = (\D,r,c)] \le \frac{1}{\underline{\delta}}$ since $W(s(0)) = 0$ from~\eqref{eq:lyap}. The proof that $\bE_{\pih}[W(s(1)) - W(s(0)) \mid s(0) = s] \le -1 + b \mathds{1}_{\cA(\ve)}(s)$ completes by taking $b = 1/\underline{\delta} + 1$.

    It follows from~\cite[Theorem 11.3.4]{meyn2012markov} that, 
    \nal{
    \bE_{\pih}[\ta \mid s(0) = s] \le W(s) + b\mathds{1}_{\cA(\ve)}(s).
    }
    The finiteness of $\bE_{\pih}[\ta \mid s(0) = s]$ follows since both $W$ and $b$ are finite. This completes the proof.
\end{proof}

\bibliographystyle{IEEEtran}
\bibliography{refs_v2}

\end{document}